\pdfoutput=1

\PassOptionsToPackage{hidelinks,linktocpage,colorlinks=true, allcolors=blue, linktoc = all}{hyperref}
\PassOptionsToPackage{capitalise,nameinlink,noabbrev}{cleveref}

\documentclass[final,cleveref]{includes/colt2025}  
\hypersetup{urlcolor=blue}
\title[]{The First-Order Oracle Complexity of Lipschitz Convex Optimization in Nondual Settings}

\usepackage{amsmath,amssymb,thmtools, mathtools, mathrsfs}

\usepackage{xcolor}
\usepackage{soul}

\usepackage{url}					%

\usepackage{enumitem}

\usepackage{xargs}
\usepackage{xfrac} %

\usepackage[bb=boondox]{mathalfa}

\usepackage{hhline}
\usepackage{booktabs} %
\usepackage{multirow}

\let\epsilon\varepsilon

\makeatother

\usepackage{tikz}			%

\usepackage{aliascnt}

\newcommand{\crefaliasappendix}{%
  \crefalias{section}{appendix} %
}

\crefname{equation}{}{}
\crefname{lemma}{Lemma}{Lemmas}
\crefname{example}{Example}{Examples}
\crefname{theorem}{Theorem}{Theorems}
\crefname{lemma}{Lemma}{Lemma}
\crefname{proposition}{Proposition}{Proposition}
\crefname{remark}{Remark}{Remark}
\crefname{claim}{Claim}{Claim}
\crefname{corollary}{Corollary}{Corollary}
\crefname{definition}{Definition}{Definition}
\crefname{conjecture}{Conjecture}{Conjecture}
\crefname{axiom}{Axiom}{Axiom}
\crefname{fact}{Fact}{Fact}
\crefname{assumption}{Assumption}{Assumptions}
\crefname{appendix}{Appendix}{Appendices}

\newtheorem{fact}[theorem]{Fact}
\newtheorem{assumption}[theorem]{Assumption}

\crefname{enumi}{Property}{Properties} %

\newcommand{\E}{\mathbb{E}}

\newcommandx{\El}[2][1=, 2=, usedefault]{\mathbb{E}_{#2}\left[ #1 \right]}
\newcommand{\R}{\mathbb{R}}

\newcommand{\innp}[1]{\langle #1 \rangle}
\newcommand{\norm}[1]{\| #1 \|}

\newcommand{\abs}[1]{| #1 |}

\newcommand{\defi}{:=}

\newcommand{\Cube}{\mathcal C}

\newcommand{\authorasterisk}{\textsuperscript{\normalfont *}}

\coltauthor{
\Name{David Martínez-Rubio}\Email{\href{mailto:david.martinezrubio@imdea.org}{david.martinezrubio@imdea.org}}\\
\addr IMDEA Software Institute, Madrid, Spain
\AND
\Name{Brian Bullins\authorasterisk}\Email{\href{bbullins@purdue.edu}{bbullins@purdue.edu}}\\
\addr Purdue University, West Lafayette, IN, USA
\AND
\Name{Cristóbal Guzmán\authorasterisk}\Email{\href{mailto:crguzmanp@uc.cl}{crguzmanp@uc.cl}}\\
\addr Institute for Mathematical and Computational Engineering, Faculty of Mathematics and
School of Engineering, Pontificia Universidad Católica de Chile, Santiago, Chile
\AND
\Name{Mathieu Molina\authorasterisk} \Email{\href{mailto:mathieu.molina.research@gmail.com}{mathieu.molina.research@gmail.com}}\\
\addr Tel Aviv University, Israel
}

\newcommand\blfootnote[1]{%
\begingroup
\renewcommand\thefootnote{}\footnote{#1}%
\addtocounter{footnote}{-1}%
\endgroup
}

\usepackage{algorithm}
\usepackage{algcompatible}
\algnewcommand{\lst}{\texttt{lst}}
\algnewcommand{\slst}{\texttt{slst}}
\algnewcommand{\SEND}{\textbf{send}}

\newsavebox{\algleft}
\newsavebox{\algright}

\makeatletter
\newcounter{algorithmicH}
\let\oldalgorithmic\algorithmic
\renewcommand{\algorithmic}{%
  \stepcounter{algorithmicH}
  \oldalgorithmic}
\renewcommand{\theHALG@line}{ALG@line.\thealgorithmicH.\arabic{ALG@line}}
\makeatother



\usepackage[natbib, backend=biber, maxcitenames=3, minalphanames=3, maxbibnames=99, style=alphabetic, hyperref, backref, useprefix=true, uniquename=false, doi=false,url=false,eprint=false]{biblatex} %

\usepackage{csquotes}               %
\bibliography{refs}        %

\DeclareCiteCommand{\cite}
  {\usebibmacro{prenote}}
  {\usebibmacro{citeindex}%
   \printtext[bibhyperref]{\usebibmacro{cite}}}
  {\multicitedelim}
  {\usebibmacro{postnote}}

\DeclareCiteCommand*{\cite}
  {\usebibmacro{prenote}}
  {\usebibmacro{citeindex}%
   \printtext[bibhyperref]{\usebibmacro{citeyear}}}
  {\multicitedelim}
  {\usebibmacro{postnote}}

\DeclareCiteCommand{\parencite}[\mkbibparens]
  {\usebibmacro{prenote}}
  {\usebibmacro{citeindex}%
    \printtext[bibhyperref]{\usebibmacro{cite}}}
  {\multicitedelim}
  {\usebibmacro{postnote}}

\DeclareCiteCommand*{\parencite}[\mkbibparens]
  {\usebibmacro{prenote}}
  {\usebibmacro{citeindex}%
    \printtext[bibhyperref]{\usebibmacro{citeyear}}}
  {\multicitedelim}
  {\usebibmacro{postnote}}

\DeclareCiteCommand{\citeauthor}
  {\usebibmacro{prenote}}
  {\ifciteindex
     {\indexnames{labelname}}
     {}%
   \printtext[bibhyperref]{\printnames{labelname}}}
  {\multicitedelim}
  {\usebibmacro{postnote}}

\DeclareCiteCommand{\footcite}[\mkbibfootnote]
  {\usebibmacro{prenote}}
  {\usebibmacro{citeindex}%
  \printtext[bibhyperref]{ \usebibmacro{cite}}}
  {\multicitedelim}
  {\usebibmacro{postnote}}

\DeclareCiteCommand{\footcitetext}[\mkbibfootnotetext]
  {\usebibmacro{prenote}}
  {\usebibmacro{citeindex}%
   \printtext[bibhyperref]{\usebibmacro{cite}}}
  {\multicitedelim}
  {\usebibmacro{postnote}}

\DeclareCiteCommand{\textcite}
  {\boolfalse{cbx:parens}}
  {\usebibmacro{citeindex}%
   \printtext[bibhyperref]{\usebibmacro{textcite}}}
  {\ifbool{cbx:parens}
     {\bibcloseparen\global\boolfalse{cbx:parens}}
     {}%
   \multicitedelim}
  {\usebibmacro{textcite:postnote}}

\newbibmacro{string+doiurlisbn}[1]{%
  \iffieldundef{doi}{%
    \iffieldundef{url}{%
      \iffieldundef{isbn}{%
        \iffieldundef{issn}{%
          #1%
        }{%
          \href{http://books.google.com/books?vid=ISSN\thefield{issn}}{#1}%
        }%
      }{%
        \href{http://books.google.com/books?vid=ISBN\thefield{isbn}}{#1}%
      }%
    }{%
      \href{\thefield{url}}{#1}%
    }%
  }{%
    \href{https://doi.org/\thefield{doi}}{#1}%
  }%
}

\DeclareFieldFormat{title}{\usebibmacro{string+doiurlisbn}{\mkbibemph{#1}}}
\DeclareFieldFormat[article,incollection,inproceedings]{title}%
    {\usebibmacro{string+doiurlisbn}{#1}}

\input{includes/definitions.tex}
\renewcommand\nu{\oldnu}

\begin{document}
\maketitle

\begin{abstract}
    We study first-order black-box convex optimization over an $\ell_p$-ball for objectives Lipschitz in the $\ell_q$-norm, solving in the affirmative the nonsmooth version of the COLT open question \citep{pmlr-v40-Guzman15} on whether the
    geometry of a smaller feasible set ($p < q$) can improve convergence rates in convex optimization, and matching prior lower bounds up to logarithmic factors.
    Our rates include \(\widetilde O(1/T)\) for convex Euclidean-Lipschitz optimization over the $\ell_1$-ball, improving on the $O(1/\sqrt{T})$ classical rate under general assumptions.
    The key technical device is a new online learning game, where the comparator is evaluated using the maximum of affine losses observed so far. We bound the value of this game above and below in terms of a combinatorial online learning quantity: the sequential fat-shattering dimension, which we characterize for the $\ell_p / \ell_q$ case. 
    Our results generally apply when the feasible set $\X$ and the set of possible subgradients $\H$ are convex, centrally symmetric, and admit a type of minmax theorem, advancing on a fundamental question by \citet[Section 10.1.2, Q3]{sridharan2012learning}.
    As a geometric consequence of our analysis, of independent interest, we obtain estimates for the expected distance of a convex hull of samples to their mean in several Banach geometries, a version of the celebrated Wendel's theorem \citep{wendel1962problem}, but quantitative and for bounded general distributions as opposed to centrally symmetric ones.
\end{abstract}

\begingroup
\renewcommand{\thefootnote}{*}
\footnotetext{Authors marked with an asterisk are listed alphabetically.}
\endgroup
\blfootnote{\color{darkgray}Most non-local notation in this work links to its definition, using \href{https://damaru2.github.io/general/notations_with_links/}{this code}; for example, ${\notationlink{def:feasible-set}{\color{darkgray}\mathcal X}}$ links to the definition of the feasible set of the optimization problem we consider in this work.}

\section{Introduction}

First-order optimization methods arise in high-dimensional settings as a basic computational primitive of continuous optimization across the sciences, engineering, statistics, and machine learning. The \emph{first-order oracle} complexity model is the main framework to investigate optimal rates of convergence in high-dimensional convex optimization \citep{nemirovski1983problem}, with the Lipschitz setting being one of the most fundamental and natural regimes, widely satisfied across problems where local-search methods work \citep{nemirovski1983problem,nesterov2004introductory}. 

The framework asks an algorithm $\ALG$ to minimize a convex Lipschitz function $f$ accessed \emph{only} via a local first-order oracle, which on a point $x$ returns the value $f(x)$ and a subgradient $g\in\partial f(x)$, cf. \ref{eq:convex_subdifferentiable}. The minimax rate of the output $\widehat{x}_T$ of $\ALG$ after $T$ steps for a class of functions $\mathcal{F}$, namely 
\(
    \OptErr[\X,\H](T) \defi   \inf_{\ALG}\sup_{f\in\mathcal{F}} (f(\widehat x_T)-\min_{x}f(x))
\), is dictated by the geometry of the feasible set $\X$ and of the set $\H$ where subgradients of $f$ can lie on.  In the so-called \emph{dual} setting, $\X$ is bounded in a given norm $\norm{\cdot}$, and $\H$ is bounded in the dual norm $\norm{\cdot}_\ast$, that is, the objective is Lipschitz in $\norm{\cdot}$ with a uniformly bounded Lipschitz constant. %
Dual settings were settled optimally in $\ell_p$-norms using the Mirror Descent (\newtarget{def:acronym_mirror_descent}{\MD{}}) algorithm \citep{nemirovski1983problem}, that is, $\X = B_p^d$ and $\H = B_{p^\ast}^d$, where $B_r^d$ denotes the unit $\ell_r$-ball in $d$ dimensions and $p^\ast$ satisfies $1/p + 1/p^\ast = 1$. More generally, it was shown that \MD{} is universally optimal in a fairly broad family of dual problems \citep{srebro2011universality}. A vast majority of developed algorithms in this and other optimization problems work in dual settings.

\paragraph{Nondual settings.} The nondual problem is far less understood. A combinatorial quantity from learning theory, known as the \emph{fat-shattering dimension}\footnote{This quantity, introduced by \citet{kearns1994efficient}, is a continuous scale-sensitive analog of the VC dimension, cf. \cref{def:fat-shattering-dimensions}.} for a class of linear functions depending on $\X$ and $\H$, provides a lower bound even in nondual settings \citep{srebro2012convex}, and  \citet[Section 10.1.2, Q3]{sridharan2012learning} conjectures the possibility that this quantity determines the minimax complexity in general, up to constants. %
\citet{guzman2015information} studies specifically the  high-dimensional $(p, q)$-norm case, $p, q \in [1, \infty]$, where $\X = B_p^d$, $\H = B_{q^\ast}^d$, $d \geq T$. The corresponding lower bounds, which are also fat-shattering lower bounds in hindsight, turn out to exhibit polynomial gaps with the best-known \MD{}-based algorithm when $p\leq q$ (up to logarithmic factors). This gap motivates the open problem \citep{pmlr-v40-Guzman15}.\footnote{The case $p \geq 2$ when $p < q$, that was also stated as an open question \citep{pmlr-v40-Guzman15} is in fact also solved optimally with mirror descent: a $G_q$-Lipschitz function in $\ell_q$ is also $G_q$-Lipschitz in $\ell_p$, so running mirror descent over the $R B_p^d$ and using the $\ell_p$ geometry gives rates of $\bigo{R G_q / T^{1 / p}}$ for $p \geq 2$, which nearly matches the lower bound. The hard question is $p < \min\{q, 2\}$, where rates would be faster than $\bigo{1 / \sqrt{T}}$. Similarly, regular acceleration in the $p$-norm works near optimally when $2 < p < q$.} 
The question is whether the geometry of the smaller feasible set $B_p^d \subsetneq B_q^d$ than in the dual setting, would allow for new algorithms that match the lower bound, or whether better lower-bound techniques are needed to establish the optimality of current algorithms. The question was also posed for smooth settings where new lower bounds were provided, which nonetheless presented a similarly gap. 

With the $(1, 2)$ case as an example, if we optimize a Euclidean convex Lipschitz function $f(x)$ with $x$ in an $\ell_1$-ball or a subset of it, the currently best known high-dimensional rate does not improve over $\bigo{1 / \sqrt{T}}$, which is what is also achieved over the much larger $\ell_2$-ball domain, whereas the lower bound only prevents a much faster decay of $O(1 / T)$. It is worth noting that despite the remaining gaps in these nondual settings in the literature, they naturally appear in several applications. Any problem with a probability distribution as variable and Lipschitzness in an $\ell_q$-norm $q> 1$, such as the natural Euclidean Lipschitzness condition, is a nondual problem.  In inverse problems, mixed-geometries result from the discrepancy between a signal structure and measurement device. For block MIR sampling, there are optimization formulations whose feasible domain is a high-dimensional $\ell_{\infty}$ box, and provably faster algorithms can be obtained by considering Lipschitzness measured in $\ell_p$-norms, where $1\leq p\leq 2$ \citep{BoyerWeissBigot2014}. %
Another example is convex relaxations of sparse PCA \citep{Dey:2017}, where the objective is a quadratic form given by the directional covariance, for which it is natural to regard the objective as Lipschitz in the $\ell_2$-norm, and the feasible set is given by an intersection of $\ell_2$- and $\ell_1$-balls of different radii. In infinite dimensions, the Rudin-Osher-Fatemi model for total variation regularization in imaging leads to $L^2$ constraints with an objective given by a total variation seminorm \citep{Rudin:1992}. 

\paragraph{Main results.} In this work, we identify a new high-dimensional phenomenon in convex optimization and achieve the $O( 1 / T)$ rate mentioned in the $(1, 2)$ example above, up to log factors. More generally, for the $(p, q)$-norm case our rates are
\[
\bigotildepl{p, q}{\frac{1}{ T^{ \frac{1}{p} - \left( \frac{1}{q}-\frac{1}{2} \right)_+
        }}},
\] 
which nearly match the mentioned lower bounds up to logarithmic factors, where $(x)_+ = \max\{x, 0\}$, $1 / \infty = 0$, solving the nonsmooth part of the COLT 2015 open question in \citep{pmlr-v40-Guzman15},
providing a polynomial improvement over the rate of Mirror Descent algorithms $\bigo{T^{ -1 / \max\{q, 2\}}}=\bigo{1  / T^{1/q - (1/q - 1/2)_+ }}$, when $p < \min\{q, 2\}$. Additionally, we obtain an upper bound in the general setting where the feasible set $\X$ and the set of possible subgradients $\H$ are convex and centrally symmetric, showing that the minimax complexity is upper bounded by a combinatorial quantity known as the 
\emph{sequential fat-shattering dimension} (sfat) which is no less than the fat-shattering dimension (fat), and we show they are the same up to log factors in the $(p, q)$-case. This general result provides the first positive contribution towards understanding the question posed by \citet{sridharan2012learning}. %
We achieve our results by defining and studying a new online learning game, of independent interest, which we explain in the following.  

\paragraph{Regret limitations, bundle methods, and our new max-regret.} To put our solution into context, we first note that the question of \citet{pmlr-v40-Guzman15} is especially puzzling if one takes into account that, despite decades of research, the \MD{} algorithm in \citet{nemirovski1983problem} is essentially the best known method for this problem, defaulting to the dual geometry when one could instead exploit the smaller geometry of the feasible set. Other families of algorithms have been devised under different names, like Dual Averaging or Follow-the-Regularized-Leader (FTRL) \citep{gordon1999regret,shalevshwartz2007primal-dual,nesterov2009primal-dual}, or Follow-the-Perturbed-Leader \citep{kalai2003efficient}, but all can be viewed through the common template and analysis of optimizing summed-up linearizations together with a regularizer \citep{abernethy2014online,mcmahan2017survey} and do not yield faster minimax rates. Most of the best known bounds in convex Lipschitz optimization are, in hindsight, regret bounds for the more general problem of online convex optimization \citep{Warmuth:1998,gordon1999regret,Zinkevich:2003,BeckTeboulle:2003,shalevshwartz2007online}. In this setting, a different convex function $\ell_t$ is chosen adversarially at every round even with knowledge of the competing algorithm, and the performance after $T$ steps is measured by the regret $R_T := \sum_{t=1}^T \ell_t(x_t) - \inf_{u\in\X} \sum_{t=1}^T \ell_t (u)$. If we set all $\ell_t$ to $f$ and output the average query, we obtain convex optimization rates of $R_T / T$. However, any analysis that first reduces the optimization problem to worst-case regret for convex Lipschitz losses is formally limited to a $\Omega(T^{-1/2})$ guarantee as per dual \citep{sridharan2010convexgames,johnson2025adaptive} and nondual \citep[Proposition~9]{rakhlin2015online}, \citep[Lemma~3 and Theorem~5]{cheng2025geometry} regret lower bounds, and can be worse for several geometries, which provides some evidence that \MD{} algorithms may not be able to match the nondual lower bounds we achieve in this work, although a \MD{} optimization analysis would not necessarily have to come from a reduction to regret.

Our nearly-optimal rates are achieved via introducing a new paradigm: a new online learning game, where the metric is what we call the \emph{max-regret}, where the comparator is measured via $T$ times the maximum of previous losses, rather than the sum:
\begin{equation}\label{eq:m-regret}
    \newtarget{def:max-regret}{\MaxRegret[\X,T]} \defi \sum_{t=1}^T \ell_t(x_t) - \inf_{u\in \X} T\max_{t\le T}\ell_t(u),
\end{equation}
or $\MaxRegret[T]$, if $\X$ is clear from context.
This regret is compatible with convex optimization in the sense that the average query $\bar{x}_T := T^{-1}\sum_{t=1}^T x_t$ can similarly be used to obtain optimization rates of $\MaxRegret[T] / T$, cf. \cref{lemma:m-online-to-batch}, by setting all $\ell_t$ to $f$, or alternatively to its first-order Taylor approximation $f(x_t) + \innp{g_t, \cdot - x_t}$ at the query $x_t$ using subgradient $g_t \in \partial f(x_t)$. Note that by the nature of the $\max$, we have $\MaxRegret[T] \leq R_T$. In the settings we study, $\MaxRegret[T]$ will be significantly lower. We lower and upper bound the minimax value of the Lipschitz convex game by expressions depending on sequential fat-shattering, that we show they nearly match in several settings. In particular, they match for the $(p, q)$-norm case, where we characterize these quantities, up to log factors. Given the widespread applications of regret across areas of computer science, we believe the $\max$-regret and its analysis to be of independent interest.

Our results go in the direction of finding oracle-efficient algorithms using information-theoretic arguments \citep{abernethy2008optimal,abernethy2009stochastic,rakhlin2010online,sridharan2010convexgames,rakhlin2011online,rakhlin2012relax}. We establish the existence of a strategy minimizing the max-regret that leads to near-optimal first-order oracle optimization complexity. This strategy can be turned into an exponential time algorithm by dynamic programming and standard discretization arguments, leading to an algorithm with runtime exponential in $T$ and $d$. 
We leave the study of an algorithm that simultaneously achieves near-optimal first-order oracle rates and computational efficiency for a forthcoming paper. The latter property should be understood as the required number of real operations being polynomial in the number of optimal first-order oracle queries and the dimension.

Finally, the minimization of the $\max$-regret above should remind the reader of bundle methods \citep{kelley1960cutting-plane,lemarechal1995new,ben-tal2005non-euclidean,lan2015bundle-level,drori2016optimal-kelley,diaz2023proximal-bundle}, which keep a maximum of affine minorants of the objective to compute their next query. Bundle methods have been reported to perform well in practice in terms of oracle queries \citep{ben-tal2005non-euclidean,lan2015bundle-level} but to the best of our knowledge, our algorithm is the first bundle method that provides an advantage in worst-case oracle optimization rates with respect to regret-based algorithms for convex Lipschitz optimization. In fact, we are not aware of any other problem class where bundle methods improve over other techniques. 

We refer to \cref{sec:further_related_work} for further related work.

\paragraph{The iid case.} An intermediate expression to bound the max-regret minimax value in the $(p, q)$-norm case, if specialized to the case where the losses are linear and iid, results in a problem of independent interest about how fast the convex hull of samples of a fixed distribution supported on $B_{q^\ast}$, approaches its mean in $\ell_{p^\ast}$ distance, cf. \cref{lem:iid-online-reduction-identity,eq:worst-case-iid-convex-hull-distance}. Or a more general statement when working with general sets $\X$, $\H$.
Our techniques also yield upper and lower bounds for this problem, see \cref{thm:iid-centered-convex-hull,prop:iid-high-dimensional-instantiation}, which match in the $(p, q)$-norm case.
This result can be viewed as an early-time, quantitative analogue of Wendel-type convex-hull containment theorems \citep{wendel1962problem}: the latter gives the exact probability that the convex hull of iid points from a centrally symmetric distribution in general position contains the origin, and gives a sharp transition at around the dimension. We instead measure the distance of the convex hull to the mean in the $\ell_{p^\ast}$ norm and we use general \emph{bounded} distributions, rather than symmetric ones. Previous quantitative variants have been obtained in very specific cases, including Euclidean distance for the uniform distribution \citep{liu2014probabilities} and distribution-dependent Euclidean-distance settings \citep{hayakawa2023estimating}.
We note however that allowing general distributions changes the problem and answer substantially. Our results show a decay as fast as $1 / T$ for the more benign geometries, but no faster. The 1D case already illustrates this phenomenon: if we have a symmetric distribution, we only need one sample on each side of the mean in order for the convex hull to contain it, and the probability of this not happening decreases as $\exp(-\Omega(T))$. However, for arbitrary distributions bounded in $[-1, 1]$ and $T$ steps, we have the distribution with atoms at $-1$ and $1 / T$ with probabilities $1 / (T+1)$ and $T / (T +1)$, respectively, which is bounded but would yield a decay in the expected distance of $\widetilde{\Theta}(1 / T)$.

We next give a technical overview of our main results in the concrete mixed $\ell_p/\ell_q$ geometry and beyond, outlining the proof strategy and highlighting the core novel technical results that make the argument possible.

\subsection{Technical Overview}

The upper bound proof proceeds in four steps. First, we show that $\max$-regret controls the optimization error when applied to the supporting affine minorants returned by the first-order oracle. Second, a minimax reduction turns the $\max$-regret game into a sequential convex-hull approximation problem. Third, we bound the value of this game above and below using sequential fat-shattering, a scale-sensitive complexity measure for adaptive function classes. Finally, we characterize this combinatorial dimension for the associated linear class up to logarithmic factors and show that it nearly matches its non-sequential counterpart, the ordinary fat-shattering dimension. Combined with the known oracle lower bounds based on ordinary fat-shattering, this yields the main theorem. We sketch some of the arguments of these steps for the $(p, q)$-norm case for \(p<q\).  That is, the feasible set is $B_p^d$ and the subgradients live in $B_{q^\ast}^d$.

\paragraph{Max-regret online-to-batch.} We normalize $f(0) = 0$ without loss of generality. The first-order oracle returns affine minorants
\(\ell_t(u)=f(x_t)+\langle g_t,u-x_t\rangle \leq f(u)\). Let \(\Vmax[p,q^\ast,T]\) denote the minimax value of the \(\max\)-regret attainable against adaptive affine losses with
these slopes and also intercepts in \([-1,1]\), which is a consequence of Lipschitzness and normalizing $f(0)=0$, as formalized in
\eqref{eqn:max_regret_value}. The max-regret online-to-batch reduction in \cref{lemma:m-online-to-batch} therefore gives the following for the minimax optimization error.
\[
    \OptErr[B_p^d,B_{q^\ast}^d](T)
    \le \frac{\Vmax[p,q^\ast,T]}{T},
\]
so it is enough to study the minimax value of the max regret.  

\paragraph{Online reduction and stochastic processes.} The minimax reduction in \cref{thm:generalized_triplex_like_theorem} removes the player's decisions by choosing actions that minimize the conditional expected loss in the minimax-swapped game. Write \(\pi\) for an adversarial
strategy that chooses predictable distributions $(r_t)_{t\leq T}$ from the previously sampled
losses, draws \(\ell_t\sim r_t\), and has conditional mean
\(m_t(u)=\mathbb E[\ell_t(u)\mid\ell_{1:t-1}]\). Then, we show in \cref{thm:generalized_triplex_like_theorem} that
\[
    \frac{\Vmax[p,q^\ast,T]}{T}
    \le
    \sup_\pi\mathbb E_\pi
    \sup_{u\in B_p^d}
    \left\{
        \frac1T\sum_{t=1}^T m_t(u)
        -\max_{t\le T}\ell_t(u)
    \right\}.
\]
Here \(\max_t\ell_t(u)=\max_{\lambda\in\Delta_T}\sum_t\lambda_t\ell_t(u)\),
where \(\Delta_T=\{\lambda\in[0,1]^T:\sum_t\lambda_t=1\}\).
For linear losses, these weights form convex combinations of the sampled
gradients. We first provide bounds for the case where the losses are linear and iid, in which case duality identifies the inner supremum with the \(\ell_{p^\ast}\)-distance from their common mean to their convex hull (\cref{lem:iid-online-reduction-identity}), a problem of independent interest. That is, for a probability distribution \(P\) supported on \(B_{q^\ast}^d\), let $\mu_P\defi\E_{G\sim P}[G]$, and let \(G_1,\ldots,G_T\) be independent samples from \(P\). The expression above in this case becomes the worst-case expected distance from the mean to the convex hull of the samples, which is
\begin{equation} \label{eq:cvx-hull-dist-intro}
    \sup_{\operatorname{supp}(P)\subseteq B_{q^\ast}^d}
    \E
    \left[
        \operatorname{dist}_{p^\ast}
        \left(
            \mu_P,
            \conv\{G_1,\ldots,G_T\}
        \right)
    \right],
\end{equation}
where $ \operatorname{dist}_{p} (x,S)$ is the $p$-norm distance from point $x$ to set $S$.\footnote{We note by passing that the quantity \eqref{eq:cvx-hull-dist-intro} has some history in convex optimization. For example, \citep{nemirovski1983problem} (page 163 therein) prove their $\ell_1$-dual setting lower bound by building a distribution $P$ that lower bounds the convex-hull distance problem.} We show that the quantity above is of the order of $\widetilde{\Theta}_{p,q} \big( 1 / T^{ \frac1p - \left( \frac1q-\frac12 \right)_+ } \big)$ and in general it depends on the (nonsequential) fat-shattering dimension.
The bound is obtained using arguments from stochastic processes, that gracefully generalize to the adaptive non-iid case. One key idea consists of restricting the convex-hull weights $\lambda_t$ to a {\em capped simplex}
\(\Delta_T^{(2)}=\{\lambda\in\Delta_T:\lambda_t\le2/T\}\). This can only increase the approximation error we must bound and allows for the use of stochastic processes tools. The intuition is that
we have a scalar empirical-process problem in each direction $u$, and just uniform weights would provide strong averaging but only recover the usual $T^{-1/2}$ fluctuation scale, whereas the full simplex can adapt completely to the realized sample but may concentrate all its mass on a single observation, destroying the averaging structure needed for concentration. The capped simplex interpolates between these two extremes: imposing $\lambda_i\le \kappa/T$ forces every admissible combination to use at least $T/\kappa$ samples, while still allowing the weights to adapt to the realization.

Crucially, the approximation error over the capped simplex can be upper bounded by an offset empirical process of the form $\E[\phi(\langle u,G\rangle)]-\kappa T^{-1}\sum_i\phi(\langle u,G_i\rangle)$ for some non-negative function $\phi$. The extra factor $\kappa>1$ creates a negative drift, so overcoming this drift requires a constant fraction of the samples to deviate coherently in the same unfavorable direction. Such collective deviations have exponentially decaying tails, yielding finite-class bounds of order $\log N/T$ rather than the usual $\sqrt{\log N/T}$. The (sequential) fat-shattering dimension then enters by controlling the sizes of the covers used to pass from finite classes to the full function class, which is approximated at successively finer scales in a chaining argument.

\paragraph{Bounding the rate via sfat.}
After generalizing the stochastic processes argument to the sequential case, the relevant class whose complexity we study for the $(p,q)$-norm case is
\(\cL[p,q^\ast][d]=\{B_{q^\ast}^d\ni g\mapsto\langle u,g\rangle:
u\in B_p^d\}\). The final expression for the upper bound involves the so-called sequential covering numbers \citep{rakhlin2015martingale} which are known to be bounded by  $\sfat[\alpha](\cL[p,q^\ast][d])$.
A similar analysis in the iid case yields expressions depending on nonsequential covering numbers which are bounded by $\fat[\alpha](\cL[p,q^\ast][d])$. Lower bounds on the max-regret value depending on these quantities also arise naturally leading to \cref{thm:fully-sequential-bound}.

Finally, we compute bounds for $\sfat[\alpha](\cL[p,q^\ast][d])$
Set \(\theta_{p,q}=1/p-(1/q-1/2)_+\). For \(p<q\) and
\(0<\alpha\lesssim_{p,q}1\), \cref{cor:explicit_regimes} establishes
\[
    \sfat[\alpha](\cL[p,q^\ast][d])
    =
    \widetilde\Theta_{p,q}
    \left(
        \min\left\{
            \alpha^{-1/\theta_{p,q}},
            \ d\left(1+\log_+\frac1{\alpha d^{\theta_{p,q}}}\right)
        \right\}
    \right)
    =
    \widetilde{\Theta}_{p,q}
    \left(
        \fat[\alpha](\cL[p,q^\ast][d])
    \right).
\]
The upper bound covers the feasible ball by smaller uniformly convex balls,
bounds the complexity within each ball, and combines the classes using a
sequential union bound, cf. \cref{lem:seq_union_bound}. For the lower bound, ordinary fat-shattering
provides a cube of possible evaluation vectors; binary search along each
coordinate turns this cube into an adaptively shattered tree. This also
explains why sequential shattering can continue growing logarithmically
after ordinary shattering saturates at dimension \(d\).
When \(T\le d\), the scale \(\alpha\asymp_{p,q}T^{-\theta_{p,q}}\)
lies in the first regime. Substitution into the game bound yields the
optimization upper bound
\(\widetilde O_{p,q}(T^{-\theta_{p,q}})\)
(\cref{cor:first-order-optimization-sfat}); the ordinary-fat oracle lower
bound of \citet[Theorem~1]{srebro2012convex} matches this rate up to logarithmic
factors. In particular, \((p,q)=(1,2)\) gives
\(\widetilde O(1/T)\).

\subsection{A general Banach-space optimization bound via sfat, and consequences.}\label{sec:further-results}

The $\max$-regret and sequential-fat shattering arguments are not specific to finite-dimensional mixed-norm geometry. Consider $E$ a real reflexive Banach space, and $\X\subseteq E$ being a nonempty closed, bounded, convex feasible set containing $0$, and $\H\subseteq E^\ast$ being a bounded set of admissible subgradients, along with the constant $\LambdaPair[\X,\H] \defi \sup_{g\in\H,u\in\X} |\langle g,u\rangle|$, assumed to be finite. Our main result, \cref{thm:fully-sequential-bound}, implies that if $\sfat[\alpha]$ of the linear class
$
    \cL[\X,\H]
    \defi
    \left\{
        \H\ni g\mapsto\langle g,u\rangle:
        u\in\X
    \right\},
$
has a polynomial growth
\begin{equation*}
    \sfat[\alpha](\cL[\X,\H])
    =
    \widetilde O
    \left(
        \left(
            \frac{\LambdaPair[\X,\H]}{\alpha}
        \right)^r
    \right),
\end{equation*}
for some $r \geq 1$ and all $\alpha \in (0, \LambdaPair[\X,\H]]$, then the minimax optimization error satisfies
\begin{equation*}
    \OptErr[\X,\H](T)
    =
    \widetilde O
    \left(
        \LambdaPair[\X,\H]T^{-\min\{1,1/r\}}
    \right).
\end{equation*}
The same conclusion holds when reflexivity is replaced by norm-compactness of the feasible set. 
We apply the general result to three infinite-dimensional function-space problems in \cref{sec:function-space-examples}, showing how faster rates can arise from the geometry of the feasible set and admissible subgradients even in more general settings. First, \cref{prop:sobolev-function-space-example} considers the Sobolev unit ball of $H^s([0,1]^m)$ with $L_2$-bounded subgradients and proves the following bounds for fat, sfat and the minimax optimization rate for $0<s\le m/2$:
\begin{equation*}
    \fat[\alpha](\cL[\X,\H])
    \asymp_{s,m}
    \sfat[\alpha](\cL[\X,\H])
    \asymp_{s,m}
    \alpha^{-\frac{2m}{m+2s}},
    \qquad
    \OptErr[\X,\H](T)
    =
    \widetilde\Theta_{s,m}
    \left(T^{-(\frac12+\frac{s}{m})}\right),
\end{equation*}
By comparison, ignoring the Sobolev structure and viewing $\X$ only as a subset of the $L_2$ unit ball gives the standard $T^{-1/2}$ nonsmooth Hilbert-space rate via the sub-gradient algorithm \citep{Shor:1964}. Thus any positive Sobolev smoothness yields a polynomial improvement, reaching the $1/T$ scale at the critical smoothness level $s=m/2$. 
Second, \cref{prop:lipschitz_bound} considers an optimization problem over normalized $1$-Lipschitz functions on $[0,1]^m$ with point-evaluation subgradients. By Arzel\`a-Ascoli the feasible set is compact, and
\begin{equation*}
    \fat[\alpha](\cL[\X,\H])
    \asymp_m
    \sfat[\alpha](\cL[\X,\H])
    \asymp_m
    \alpha^{-m},
    \qquad
    \OptErr[\X,\H](T)
    =
    \widetilde\Theta_m(T^{-1/m}).
\end{equation*}
The one-dimensional case yields a $1/T$ rate. 
And third, \cref{prop:basis_bound} considers an $\ell_p$-ball of coefficients in an orthonormal basis, for $1<p<2$, with coordinate-evaluation subgradients. The ambient space is reflexive, although the feasible ball is not norm compact, and
\begin{equation*}
    \fat[\alpha](\cL[\X,\H])
    \asymp_p
    \sfat[\alpha](\cL[\X,\H])
    \asymp_p
    \alpha^{-p},
    \qquad
    \OptErr[\X,\H](T)
    =
    \widetilde\Theta_p(T^{-1/p}).
\end{equation*}
The generic dual problem with the full $\ell_{p^\ast}$ unit ball of subgradients instead has the standard $T^{-1/2}$ nonsmooth rate \citep{nemirovski1983problem} through mirror-descent. 
, so the faster $T^{-1/p}$ rate exploits the restricted coordinate-observation geometry.

We also give concrete interpretations of these examples in terms of functional regression, individual fairness, and low-complexity wavelet representations for image compression and denoising. Together, they show that faster rates from nondual geometry are neither specific to the $\ell_p/\ell_q$ setting nor to finite dimension, but are governed more generally by the pairing between feasible points and admissible subgradients.

\section{Notation, Preliminaries and Groundwork}\label{sec:notation-preliminaries}

For \(r\in[1,\infty]\), let \(r^{\ast}\in[1,\infty]\) satisfy
\(1/r+1/r^{\ast}=1\), with the usual endpoint conventions, and denote the unit $r$-ball as
\(B_r^d:=\{x\in\mathbb R^d:\|x\|_r\le1\}\).  For \(T\ge1\), we denote the simplex by 
\(\Delta_T=\{\lambda\in\mathbb R_+^T:\sum_{t=1}^T\lambda_t=1\}\).
For \(a\in\mathbb R\), we use \(a_+\defi\max\{a,0\}\), and write
\(\log_+t\defi(\log t)_+\). For a condition \(E\), we write
\(\mathbf 1_{E}\) for its \(0/1\) indicator.  We write
\(\newtarget{def:big_O_p}{O_{\vartheta}(\cdot)}\) when the omitted constant may
depend on the parameter or tuple of parameters \(\vartheta\), and
\(\newtarget{def:big_O_tilde}{\widetilde O(\cdot)}\) to omit logarithmic
factors. %
We denote $G_{1:T}\stackrel{\mathrm{iid}}{\sim}P$ when $G_1, \dots, G_T$ are sampled iid from $P$.
For a sequentially nested expression like the following, we use the compact notation:
\[
    (\sup_{r_t}\E_{Z_t\sim r_t})_{t=a}^b F(Z_{a:b}) \defi \sup_{r_a}\E_{Z_a\sim r_a}\cdots \sup_{r_b}\E_{Z_b\sim r_b} F(Z_{a:b}),
\]
where \(r_t\) is allowed to be a predictable kernel depending on the past
\(Z_{a:t-1}\).
Given a nonempty decision set \(\X\), actions \(x_t\in\X\), and losses \(\ell_t:\X\to\mathbb R\) in some class, we define the $\max$-regret as in \cref{eq:m-regret} without explicitly writing the losses class. Recall that the $\max$-regret, despite its name, it is always no greater than the regular regret, due to the $\max$ appearing with a minus sign in its definition.

Let $\X,\H\subseteq\mathbb R^d$ be origin-symmetric convex bodies. For the polar body and its associated norm, we write $ \newtarget{def:polar-body}{\Xpolar}
    \defi
    \left\{
        z\in\mathbb R^d:
        \sup_{u\in\X}
        |\langle u,z\rangle|
        \le1
    \right\}$, $
    \|z\|_{\Xpolar}
    \defi
    \sup_{u\in\X}
    |\langle u,z\rangle|$. We define $\operatorname{dist}_{\|\cdot\|}(x,S)=\inf_{y\in S}\|y-x\|$, and we use the shorthand notations $\operatorname{dist}_{p}=\operatorname{dist}_{\|\cdot\|_p}$, and  $\operatorname{dist}_{\Xpolar}=\operatorname{dist}_{\|\cdot\|_{\Xpolar}}$. Analogous terminology will be used for infinite-dimensional settings.

Let \(\X\) be a convex subset of a real normed space \(E\). A function
\(f:\X\to\mathbb R\) is convex and subdifferentiable in $\X$ if for every \(x\in\X\) there is
a functional \(g\in E^\ast\), such that:
\begin{equation}\label{eq:convex_subdifferentiable}
    f(y)\ge f(x)+\langle g,y-x\rangle \qquad\text{for every }y\in\X.
\end{equation}
Then $g$ is called a subgradient and the set of all such subgradients is the subdifferential \(\partial f(x)\).
Throughout, our objectives are real-valued convex subdifferentiable functions on their feasible sets.

In online learning, learnability is characterized in terms of combinatorial parameters. The following introduces the ones that are relevant to our results.

\begin{definition}[Fat-shattering and sequential fat-shattering dimensions]
\label{def:fat-shattering-dimensions}
For a set $\mathcal{Z}$, let \(\mathcal G\subseteq\mathbb R^{\mathcal Z}\) and $\alpha>0$.
A set
\(z_1,\dots,z_m\in\mathcal Z\) is \(\alpha\)-shattered by \(\mathcal G\) if
there are thresholds \(s_1,\dots,s_m\in\mathbb R\) such that, for every
\(\varepsilon\in\{\pm1\}^m\), some \(h_\varepsilon\in\mathcal G\) satisfies
\[
    \varepsilon_i\bigl(h_\varepsilon(z_i)-s_i\bigr)\ge\frac{\alpha}{2},
    \qquad i=1,\dots,m.
\]
A depth-\(T\) predictable tree over \(\mathcal Z\) is a sequence
\(\mathbf z=(z_t)_{t=1}^T\), with
\(z_t:\{\pm1\}^{t-1}\to\mathcal Z\).  It is sequentially
\(\alpha\)-shattered by \(\mathcal G\) if there is a real-valued predictable
tree \(\mathbf s=(s_t)_{t=1}^T\) such that, for every
\(\varepsilon\in\{\pm1\}^T\), some \(h_\varepsilon\in\mathcal G\) satisfies
\[
    \varepsilon_t
    \bigl(h_\varepsilon(z_t(\varepsilon_{<t}))
    -s_t(\varepsilon_{<t})\bigr)
    \ge \frac{\alpha}{2},
    \qquad t=1,\dots,T.
\]
The fat-shattering dimension \(\newtarget{def:fat-shattering-dimension}{\fat[\alpha](\mathcal G)}\) and its sequential version \(\newtarget{def:sequential-fat-shattering-dimension}{\sfat[\alpha](\mathcal G)}\) are the largest possible such \(m\) and $T$, respectively.
\end{definition}
Note that using the definition with a predictable tree that is constant across each level yields
\(\fat[\alpha](\mathcal G)\le\sfat[\alpha](\mathcal G)\); see also
\citep[Section~2]{rakhlin2015online}. 
Our upper bounds make use of the following $\ell_\infty$ covering numbers, which in turn are 
controlled by $\fat$ for fixed designs \citep{alon1993scale} and by $\sfat$ for predictable trees \citep[Corollary~4]{rakhlin2010online}.

\begin{definition}[Sequential covers]\label{def:sequential-cover}
For a fixed depth-$T$ \(\mathcal Z\)-valued tree \(\mathbf z\), an
    $\ell_\infty$ sequential \(\alpha\)-cover $\mathcal V_{\mathbf z}$ of \(\mathcal G\)  is a set of depth-\(T\) trees over $\R$ such that
\[
    \forall h\in\mathcal G,\ \forall \varepsilon\in\{\pm1\}^T,\
    \exists \mathbf v\in\mathcal{V}_{\mathbf z}\ \text{ such that }\
    \max_{1\le t\le T}
    \left|
        h\bigl(z_t(\varepsilon_{<t})\bigr)
        -
        v_t(\varepsilon_{<t})
    \right|
    \le \alpha.
\]
\end{definition}
Note that above \(\mathbf v\) may depend on both \(h\) and \(\varepsilon\), so it is a pathwise cover.
The size of a worst-case optimal cover over depth-$T$ trees is denoted by \(\newtarget{def:sequential-covering-number}{\Ninfseq(\alpha,\mathcal G,T)} {\defi}\sup_{\mathbf z} \min_{\mathcal{V}_{\mathbf z}} \abs{\mathcal{V}_{\mathbf z}}
\)
We denote the nonsequential covering number by \(\newtarget{def:covering-number}{\Ninf(\alpha,\mathcal G,z_{1:T})}\), which is the size of the worst-case optimal cover over trees where each level is constant.

On a first read, the reader may just consider the following spaces to be finite-dimensional and the sets $\X$ and $\H$ being compact. We present our results in more generality and provide some infinite-dimensional examples, under mild assumptions.
Let $E$ be a real Banach space, let $\newtarget{def:feasible-set}{\X}\subseteq E$ be our feasible set: a nonempty closed, bounded, convex set containing $0$. Let $\newtarget{def:subgradient-set}{\H}\subseteq E^\ast$ be a nonempty bounded centrally symmetric set containing $0$.\footnote{We tacitly equip all spaces below with the natural $\sigma$-algebras making the relevant evaluation maps and probability kernels measurable.} We write $\langle g,u\rangle\defi g(u)$ for the duality pairing between $E^\ast$ and $E$. We equip $\X$ with the weak topology. Since $E$ is reflexive and $\X$ is closed, bounded, and convex, $\X$ is weakly compact, while every map $u\mapsto\langle g,u\rangle$, $g\in E^\ast$, is weakly continuous.
Define the cross-duality constant and the affine sample space by
\begin{equation}\label{eq:cross-duality-constant}
    \newtarget{def:cross-duality-constant}{\LambdaPair[\X,\H]}
    \defi
    \sup_{g\in\H,\,u\in\X}
    |\langle g,u\rangle|,
    \qquad
    \newtarget{def:affine-sample-space}{\cZ[\X,\H]}
    \defi
    \H\times[-\LambdaPair[\X,\H],\LambdaPair[\X,\H]].
\end{equation}
The boundedness assumptions imply $\LambdaPair[\X,\H]<\infty$. Define the associated linear and affine pairing classes by
\begin{equation}\label{eq:linear_classes}
\begin{aligned}
    \newtarget{def:linear-pairing-class}{\cL[\X,\H]}
    &\defi
    \left\{
        \H\ni g\mapsto\langle g,u\rangle:
        u\in\X
    \right\}, \\
    \newtarget{def:affine-pairing-class}{\cA[\X,\H]}
    &\defi
    \left\{
        \cZ[\X,\H]\ni(g,b)\mapsto\langle g,u\rangle+b:
        u\in\X
    \right\}.
\end{aligned}
\end{equation}
Every function in $\cL[\X,\H]$ takes values in $[-\LambdaPair[\X,\H],\LambdaPair[\X,\H]]$. Note that the linear class $\cL[\X,\H]$, which we will use in shattering constants fat and sfat, is about shattering the space of subgradients in $\X$ using linear functions whose gradients are points in the domain $\X$. That is, the roles of $\X$ and $\H$ are dualized in the shattering reduction, with respect to how we think about them in optimization.

We consider convex subdifferentiable functions $f:\X\to\mathbb R$, normalized by $f(0)=0$, accessed through a first-order oracle. At every query $x\in\X$, the oracle returns the value $f(x)$ and a subgradient $g\in\partial f(x)\cap\H$ in the sense of \cref{eq:convex_subdifferentiable}. 
For a query $x_t\in\X$ with oracle output $g_t\in\H$, let $\ell_t(u)\defi\langle g_t,u\rangle+b_t$, where $b_t\defi f(x_t)-\langle g_t,x_t\rangle$. Since $f(0)=0$ and $f$ admits an $\H$-valued supporting functional at $0$, we have $b_t\in[-2\LambdaPair[\X,\H],0]$. Thus $Y_t\defi(g_t,b_t+\LambdaPair[\X,\H])$ belongs to $\cZ[\X,\H]$, and, for every $u\in\X$, the function $a_u(g,b)\defi\langle g,u\rangle+b$ in $\cA[\X,\H]$ satisfies $a_u(Y_t)=\ell_t(u)+\LambdaPair[\X,\H]$. This common shift leaves $\max$-regret unchanged.

We define the minimax error after $T$ adaptive first-order oracle queries as
\begin{equation}\label{eq:general-minimax-error}
    \newtarget{def:minimax-optimization-error}{\OptErr[\X,\H](T)}
    \defi
    \inf_{\mathrm{ALG}}
    \sup_{f,\mathrm{Oracle}}
        f(\widehat x_T)
        -
        \inf_{u\in\X}f(u),
\end{equation}
where $\mathrm{ALG}$ outputs $\widehat x_T\in\X$ after $T$ queries, and the supremum ranges over admissible convex objectives $f$ and valid adaptive first-order oracles.

We often use \(r\in[1,\infty]\) in a subscript to denote \(B_r^d\), so for instance \(\cL[p,q^\ast][d]\) and \(\cA[p,q^\ast][d]\) correspond to \cref{eq:linear_classes} when \((\X,\H)=(B_p^d,B_{q^\ast}^d)\), where we made $d$ explicit to remove ambiguity.

As a running concrete example, on a first read,  the reader may simply keep in mind $E=\mathbb R^d$, $\X=B_p^d$, and $\H=B_{q^\ast}^d$, which is the setting of the optimization open question. In particular, if $p\le q$, then $\LambdaPair[p,q^\ast]=1$, since
\(
    |\langle g,u\rangle|
    \le
    \|u\|_p\|g\|_{p^\ast}
    \le
    \|g\|_{q^\ast}
    \le1.
\)

Next, we state our general setting. We work in spaces where a minimax theorem works for the value of the $\max$-regret game $\Vmax[\X,\H,T]$, defined below.

\begin{assumption}\label{ass:minimax_theorem_consequence}
Let $\X \subseteq E$, $\H \subseteq E^{\ast}$ be nonempty, convex and symmetric with respect to $0$, with $\H$ bounded and $\LambdaPair[\X,\H]<\infty$, and such that the minimax theorem consequence $\circled{1}$ applies, where $r_t$ and $s_t$ are distributions over the affine loss class $\mathcal{F} := \{x \mapsto \innp{g, x} + b :(g,b)\in\cZ[\X,\H]\}$ and over $\X$, chosen by the adversary and the player, respectively. The minimax value of the game is defined as:
\begin{align} 
    \newtarget{def:max-regret-value}{\Vmax[\X,\H,T]} &:= \left(\inf_{x_t \in \X}\sup_{\ell_t \in \mathcal{F}}\right)_{t=1}^T \left\{ \sum_{t=1}^T \ell_{t}(x_t) - \inf_{u \in \X}  T \max_{t\in[T]} \ell_{t}(u) \right\} \label{eqn:max_regret_value} \\
    &\circled{1}[=] \left(\sup_{r_t} \inf_{s_t} \E_{\substack{x_t \sim s_t\\ \ell_t \sim r_t}}\right)_{t=1}^T  \left\{ \sum_{t=1}^T \ell_t(x_t) - \inf_{u \in \X} T \max_{t\in[T]} \ell_t(u) \right\}  \notag 
\end{align}
    We further assume that for every \(\eta>0\), an \(\eta\)-minimizing action exists for the conditional expectation of an announced loss distribution $\inf_{x\in\X} \mathbb E_{\ell_t\sim r_t(\cdot\mid x_{1:t-1},\ell_{1:t-1})} \ell_t(x)$.
\end{assumption}

The assumption above is mild, and in particular it holds for finite-dimensional compact  sets $\X, \H$ symmetric with respect to $0$. Indeed, see \citet{abernethy2009stochastic} that made   this connection for the regular regret, between minmax regret and its analogous form after $\circled{1}$ by taking randomized strategies and applying the minmax theorem.
    The minimax assumption $\circled{1}$ holds more generally whenever there exists a Hausdorff locally convex topology $\tau$ for which $\X$ is compact and convex and every admissible affine loss $u\mapsto\langle g,u\rangle+b$ is $\tau$-continuous. In this case, the required minimax interchange follows from Sion's minimax theorem \citep{sion1958general}. This covers the standard settings used throughout the paper: closed and bounded feasible sets in finite-dimensional spaces, which are compact in the usual Euclidean topology \citep{abernethy2009stochastic,rakhlin2011online}; norm-compact feasible sets in arbitrary Banach spaces, such as closed uniformly bounded equicontinuous classes of continuous functions by the Arzel\`a-Ascoli theorem; and closed bounded convex feasible sets in reflexive Banach spaces, which are weakly compact, while continuous linear functionals, and hence admissible affine losses, are weakly continuous.

The $\eta$-optimal selection condition holds whenever \(\X\) is separable in norm: conditional expected affine losses are norm-continuous, so one may choose the first \(\eta\)-optimal point in a fixed countable dense subset.

    Some of the hypotheses in \cref{ass:minimax_theorem_consequence} are imposed only for simplicity and can be weakened. In particular, the set $\X$ could be symmetric with respect to an arbitrary center. Or $\H$ need not be centrally symmetric or contain $0$ for the upper-bound argument: it is enough for $\H$ to be bounded. In finite-dimensional settings, more general loss classes may also be treated whenever the corresponding minimax interchange and existence of $\eta$-minimizing strategies is valid. In fact, \cref{thm:generalized_triplex_like_theorem} does not use convexity of the losses.

The following lemma makes explicit how $\max$-regret allows for a simple reduction to minimizing a convex function, by repeatedly using it or surrogates of it like its linearizations at queried points $\ell_t(x)= f(x_t) + \innp{g_t, x - x_t}$, $g_t \in \partial f(x_t)$. It works since the first-order information observed along the optimization trajectory must be consistent with affine minorants of a single convex function. The lemma is analogous to the classical online-to-batch conversion for regular regret.
\begin{lemma}[Max-regret online-to-batch]\label{lemma:m-online-to-batch}
    For a convex set $\X$, let $f:\X \to \R$ be a convex function and for an $\max$-regret game \cref{eq:m-regret} with query $x_t$, $t \in [T]$, let $\ell_t(x):\X \to \R$ satisfy $\ell_t(x) \leq f(x)$ and $\ell_t(x_t) = f(x_t)$. Then for $\bar{x}_T = T^{-1}\sum_{t=1}^T x_t$ and any $u \in \X$, we have
\[
    f(\bar x_T)-f(u) \le \frac{\MaxRegret[\X,T]}{T}.
\] 
For instance, $u$ can be a minimizer $x^\ast$ of $f$, if it exists.
\end{lemma}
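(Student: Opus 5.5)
The plan is the usual online-to-batch chain: Jensen on the left, then the two hypotheses on $\ell_t$ to bring in the max-regret. First apply Jensen to the convex $f$ at the average $\bar x_T\in\X$ (which lies in $\X$ by convexity). This gives
\[
    f(\bar x_T)\le \frac1T\sum_{t=1}^T f(x_t)=\frac1T\sum_{t=1}^T \ell_t(x_t),
\]
where the equality is the hypothesis $\ell_t(x_t)=f(x_t)$.

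Next, fix any $u\in\X$ and use the minorant property $\ell_t(u)\le f(u)$ for every $t$. Taking the maximum over $t\in[T]$ gives $\max_{t\le T}\ell_t(u)\le f(u)$, i.e.\ $-f(u)\le-\max_{t\le T}\ell_t(u)$. Combining this with the previous display,
\[
    f(\bar x_T)-f(u)\le \frac1T\Bigl(\sum_{t=1}^T \ell_t(x_t)-T\max_{t\le T}\ell_t(u)\Bigr).
\]

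Finally, bound the right-hand side by the max-regret. Since $\max_{t}\ell_t(u)\ge \inf_{v\in\X}T^{-1}\cdot T\max_t \ell_t(v)$ for this fixed $u$, the expression in parentheses is at most $\sum_t\ell_t(x_t)-\inf_{v\in\X}T\max_t\ell_t(v)=\MaxRegret[\X,T]$. This yields the claim for every $u\in\X$, and in particular for a minimizer $x^\ast$ when one exists.

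There is no real obstacle. The only point needing care is noting that the max (rather than the sum) is exactly what the minorant property controls uniformly: each $\ell_t(u)\le f(u)$, so their maximum is as well. This is what lets the comparator term be $T\max_t\ell_t(u)$ instead of $\sum_t\ell_t(u)$. The argument also uses no measurability or finiteness issues beyond $\MaxRegret[\X,T]$ being well defined, possibly $+\infty$, in which case the bound is trivial.
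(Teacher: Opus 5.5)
Your proposal is correct and follows the paper's proof exactly: Jensen's inequality on $f$ at $\bar x_T$, then the two hypotheses $\ell_t(x_t)=f(x_t)$ and $\max_{t\le T}\ell_t(u)\le f(u)$, and finally comparing the fixed $u$ with the infimum over $\X$ in the definition of $\MaxRegret[\X,T]$.
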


\begin{proof}
    We use Jensen's inequality in $\circled{1}$, the assumption about $\ell_t$ in $\circled{2}$. For $\circled{3}$, we use the definition of $\max$-regret that compares with the best fixed action versus the fixed action $u$:
\[
\begin{aligned}
f(\bar x_T)-f(u)
    \circled{1}[\le]
\sum_{t=1}^T \frac{f(x_t)}{T}- f(u) 
    \circled{2}[\le]
\sum_{t=1}^T \frac{\ell_t(x_t)}{T}-\max_{t\le T}\ell_t(u) 
    \circled{3}[\le]
\frac{\MaxRegret[\X,T]}{T}.
\end{aligned}
\]
\end{proof}

\section{Reduction to convex-hull approximation problem, and the iid case}

We upper bound the value of the game by an expression involving only the sampled losses and their conditional means along the realized history. Motivated by the triplex framework of \citet{rakhlin2011online}, we remove the player's choices directly by minimizing conditional expected losses in the minimax-swapped game.
{
\begin{proposition}[Online reduction]\label{thm:generalized_triplex_like_theorem}\linktoproof{thm:generalized_triplex_like_theorem}
{
Under \cref{ass:minimax_theorem_consequence}, let
\(r_t(\cdot\mid\ell_{1:t-1})\) range over predictable distributions for admissible losses, and define
\(
    m_t(u)\defi
    \int \ell(u)\,r_t(d\ell\mid\ell_{1:t-1}).
\)
Then
\begin{equation}\label{eq:online_reduced_value_game}
    \Vmax[\X,\H,T]
    \le
    \left(\sup_{r_t}\mathbb E_{\ell_t\sim r_t}\right)\!{\vphantom{\Big|}}_{t=1}^{T}
    \sup_{u\in\X}
    \left\{
        \sum_{t=1}^T m_t(u)
        -T\max_{t\le T}\ell_t(u)
    \right\}.
\end{equation}
}
\end{proposition}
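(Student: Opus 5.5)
We start from the minimax-swapped form of the value given by \cref{ass:minimax_theorem_consequence}, equality $\circled{1}$:
\[
\Vmax[\X,\H,T]=\left(\sup_{r_t}\inf_{s_t}\E_{x_t\sim s_t,\ \ell_t\sim r_t}\right)_{t=1}^T\Bigl\{\textstyle\sum_t \ell_t(x_t)-\inf_{u\in\X}T\max_{t}\ell_t(u)\Bigr\}.
\]
The plan is to get an upper bound by replacing each $\inf_{s_t}$ with one concrete player strategy: a point mass at an $\eta$-minimizer $x_t^\eta$ of the conditional expected loss $x\mapsto m_t(x)=\E_{\ell_t\sim r_t}\ell_t(x)$. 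Such a point exists by the last part of \cref{ass:minimax_theorem_consequence}. Since $\inf_{s_t}$ is at most the value at any particular $s_t$, this substitution, carried out from the innermost round $t=T$ outward, can only increase the nested expression. Because $r_t$ may depend on $x_{1:t-1}$, and after substitution these are deterministic functions of $\ell_{1:t-1}$, the adversary's kernels become predictable in $\ell_{1:t-1}$ alone. This yields exactly the class of $r_t(\cdot\mid\ell_{1:t-1})$ in the statement. One should check that the supremum over such reduced kernels dominates, which is immediate because we only enlarge or keep the feasible adversary set.

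Next we bound the player's loss term. By linearity of the nested expectations and the tower property, $\E[\sum_t\ell_t(x_t^\eta)]=\E[\sum_t m_t(x_t^\eta)]$, since $x_t^\eta$ is $\ell_{1:t-1}$-measurable. The $\eta$-optimality then gives $m_t(x_t^\eta)\le\inf_{x\in\X}m_t(x)+\eta$.

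For the comparator part, for every $u\in\X$ we have $\inf_x m_t(x)\le m_t(u)$. Hence
\[
\sum_t \inf_x m_t(x) \le \inf_{u}\sum_t m_t(u)\le \sum_t m_t(u)
\]
for each fixed $u$. We combine this with $-\inf_u T\max_t\ell_t(u)=\sup_u\{-T\max_t\ell_t(u)\}$. For each realization, and for every $u$,
\[
\sum_t\inf_x m_t(x)-T\max_t\ell_t(u)\le \sum_t m_t(u)-T\max_t\ell_t(u).
\]
Taking $\sup_u$ on both sides bounds the integrand pathwise by $\sup_u\{\sum_t m_t(u)-T\max_t\ell_t(u)\}+T\eta$. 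Here $m_t$ is a fixed predictable quantity inside the whole path, so no interchange of sup and expectation is needed beyond monotonicity of the nested operators. Finally we let $\eta\to0$.

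The main obstacle is the bookkeeping of the nested $\sup\inf\E$ operators: the $m_t$ terms must be moved out of the inner expectations consistently, using tower-property steps that are valid for the nested sup-expectation operator rather than for a single measure. The cleanest route is to first fix an arbitrary sequence of adversary kernels, which makes the whole nested object a single expectation under the induced path measure. We then do the pathwise bound above and take suprema at the end, using monotonicity. Measurability of $x_t^\eta$ as a function of the history is handled by the standing measurability convention, for example by choosing the first $\eta$-optimal point in a countable dense set.
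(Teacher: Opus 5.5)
Your argument is correct, and it reaches the bound by a more direct route than the paper.

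The paper follows the triplex template. It adds and subtracts ghost conditional expectations, then applies $\inf_a[C_1+C_2+C_3]\le\sup_a C_1+\inf_a C_2+\sup_a C_3$ at every level. This splits the value into three separate nested expressions:
\begin{itemize}
\item a martingale-difference term, which is zero by the tower property;
\item a learner-versus-conditional-mean term, which is at most $T\eta$ via the $\eta$-optimal point mass;
\item the comparator term.
\end{itemize}
The third term carries $\sup_{s_t}$. So the paper needs an additional backward-induction argument showing that the player's actions only supply auxiliary randomness to the adversary and can be eliminated. Only then can the kernels be written as $r_t(\cdot\mid\ell_{1:t-1})$.

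You instead fix the greedy $\eta$-optimal deterministic strategy from the outset, which upper bounds each $\inf_{s_t}$. The reduction of the history dependence from $(x_{1:t-1},\ell_{1:t-1})$ to $\ell_{1:t-1}$ is then automatic. The first two triplex terms collapse into one tower step plus the pathwise inequality $\sum_t m_t(x_t^\eta)\le\sum_t m_t(u)+T\eta$ for all $u\in\mathcal X$.

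Your route is shorter and never hands player randomness to the adversary. The paper's route is more modular: it mirrors the triplex inequality and keeps the learner-side and comparator-side terms separate, which helps when reusing the decomposition for other aggregations or other player strategies.

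The one step to phrase carefully is ``fix the kernels, bound, then take suprema.'' Identifying the nested $(\sup_{r_t}\mathbb E)$ operator with a supremum over adversary strategies needs an $\varepsilon$-optimal measurable selection; the paper uses the same device in its main theorem. You can bypass it by doing the tower step level by level. Since $\ell_t(x_t^\eta)$ does not depend on later choices, it can be pulled out of the inner operators, integrated against $r_t$ to give $m_t(x_t^\eta)$, and pushed back in before invoking monotonicity.
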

}
Recall that the losses are $\innp{g_t, \cdot} + b_t$, with $(g_t,b_t)\in\cZ[\X,\H]$. The mixed-norm case with $p\le q$ is recovered using \(g_t\in B_{q^{\ast}}^d\) and \(b_t\in[-1,1]\). The offsets are important for the reduction from optimization but, as \cref{thm:fully-sequential-bound} shows, they only create a lower-order term.

Before bounding the right-hand side of \cref{thm:generalized_triplex_like_theorem} in the fully adaptive affine setting, it is illustrative to look at the simpler iid linear case. %
This already gives the aforementioned version of a classical
convex-hull problem \citep{wendel1962problem} but quantitative and for general non-symmetric distributions instead. We start from the simplification in which all the losses are iid and linear, rather
than adaptive affine losses with arbitrary predictable means. 

Let $\X,\H\subseteq\mathbb R^d$ be origin-symmetric convex bodies.
In this iid linear case, bounding the online-reduction term is exactly the
problem of approximating the mean of a distribution by the convex hull of its
samples in the polar norm \(\|\cdot\|_{\Xpolar}\), as we show next. The iid property allows the bound to depend on \(\fat\), rather the \(\sfat\) that will appear in the adaptive case.
{
\begin{lemma}[iid specialization]
\label{lem:iid-online-reduction-identity}
\linktoproof{lem:iid-online-reduction-identity}
Let \(P\) be supported on \(\H\), with mean \(\mu_P\defi\int g\,dP(g)\). If the
losses in the right-hand side of \eqref{eq:online_reduced_value_game} are the
iid linear losses \(\ell_t(x)=\langle G_t,x\rangle\), with
    \(G_{1:T}\stackrel{\mathrm{iid}}{\sim}P\) and $m_t(u) = \mathbb{E} \ell_t(u)$, then the corresponding term,
divided by \(T\), satisfies
\[
    \frac{1}{T}
    \mathbb E_{G_{1:T}\stackrel{\mathrm{iid}}{\sim}P}
    \sup_{u\in\X}
    \left\{
        \sum_{t=1}^T m_t(u)
        -
        T\max_{t\le T}\langle G_t,u\rangle
    \right\}
    =
    \mathbb E_{G_{1:T}\stackrel{\mathrm{iid}}{\sim}P}
    \operatorname{dist}_{\Xpolar}
    \left(\mu_P,\conv\{G_{1:T}\}\right).
\]
\end{lemma}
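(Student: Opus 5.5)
We prove the identity pointwise, for every fixed realization \(G_{1:T}\in\H^T\), and then take expectations. Since \(m_t(u)=\mathbb E\langle G_t,u\rangle=\langle\mu_P,u\rangle\) for every \(t\), the left-hand side divided by \(T\) equals the expectation of
\[
    \Phi(G_{1:T})\defi\sup_{u\in\X}\Bigl\{\langle\mu_P,u\rangle-\max_{t\le T}\langle G_t,u\rangle\Bigr\}
    =\sup_{u\in\X}\min_{\lambda\in\Delta_T}\Bigl\langle \mu_P-\sum_{t=1}^T\lambda_tG_t,\,u\Bigr\rangle .
\]
Here we wrote \(\max_t\langle G_t,u\rangle=\max_{\lambda\in\Delta_T}\langle\sum_t\lambda_tG_t,u\rangle\).

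The key step is to exchange \(\sup_u\) and \(\min_\lambda\). The function \((u,\lambda)\mapsto\langle\mu_P-\sum_t\lambda_tG_t,u\rangle\) is bilinear. The set \(\Delta_T\) is compact and convex, and \(\X\) is a convex body, so it is compact and convex as well. The classical von Neumann/Sion minimax theorem therefore gives
\[
    \Phi(G_{1:T})=\min_{\lambda\in\Delta_T}\sup_{u\in\X}\Bigl\langle\mu_P-\sum_t\lambda_tG_t,u\Bigr\rangle .
\]
Because \(\X\) is origin-symmetric, \(\sup_{u\in\X}\langle z,u\rangle=\sup_{u\in\X}|\langle z,u\rangle|=\|z\|_{\Xpolar}\). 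Hence
\[
    \Phi=\min_{\lambda\in\Delta_T}\Bigl\|\mu_P-\sum_t\lambda_tG_t\Bigr\|_{\Xpolar}=\operatorname{dist}_{\Xpolar}\bigl(\mu_P,\conv\{G_{1:T}\}\bigr),
\]
since \(\conv\{G_{1:T}\}\) is exactly the set of points \(\sum_t\lambda_tG_t\) with \(\lambda\in\Delta_T\). Taking expectations over \(G_{1:T}\stackrel{\mathrm{iid}}{\sim}P\) yields the claim.

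The remaining checks are routine. First, \(\mu_P\) is well-defined and lies in \(\H\), because \(\H\) is a bounded closed convex set. Second, \(\Phi\) is measurable, since the distance to the convex hull is a continuous function of \((G_1,\dots,G_T)\), so the expectation makes sense. The only step requiring care is the minimax exchange. In finite dimensions it is immediate from compactness of \(\Delta_T\) (indeed compactness of either side suffices for Sion) together with bilinearity. The symmetry of \(\X\) is what turns the one-sided support function into the polar norm.
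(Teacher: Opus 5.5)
Your proof is correct and follows the paper's argument: you write \(\max_t\langle G_t,u\rangle\) as a maximum over \(\lambda\in\Delta_T\), exchange \(\sup_{u\in\X}\) and \(\min_{\lambda}\) via the von Neumann/Sion minimax theorem (bilinear payoff, compact convex sets), and use the symmetry of \(\X\) to identify the support function with \(\|\cdot\|_{\Xpolar}\). The only difference is cosmetic. The paper first substitutes \(u\mapsto-u\) to work with \(\langle\sum_t\lambda_tG_t-\mu_P,u\rangle\), whereas you keep the original sign and invoke symmetry of \(\X\) and of the polar norm at the end.
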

}
Note that above \(m_t(u)=\langle\mu_P,u\rangle\).
We now study worst-case bounds in this iid setting. Define the worst-case expected $\Xpolar$-distance from the convex hull of $T$ samples to their common mean:
\begin{equation}\label{eq:worst-case-iid-convex-hull-distance}
    \newtarget{def:worst-case-convex-hull-distance}{\ConvDist[\X,\H,T]}
    \defi
    \sup_{\operatorname{supp}(P)\subseteq\H}
    \mathbb E_{G_{1:T}\stackrel{\mathrm{iid}}{\sim}P}
    \operatorname{dist}_{\Xpolar}
    \left(\mu_P,\conv\{G_{1:T}\}\right).
\end{equation}

In this case, we can obtain upper and lower bounds depending on $\fat$.

{
\begin{theorem}[iid convex-hull approximation]
\label{thm:iid-centered-convex-hull}\linktoproof{thm:iid-centered-convex-hull}
Let \(T\ge2\), set \(\Lambda\defi\LambdaPair[\X,\H]\), and write
\(\fat[\eta][\X,\H]\defi\fat[\eta](\cL[\X,\H])\). There are universal
constants \(C,c,\gamma>0\) such that
\[
\begin{aligned}
    \sup_{\alpha>0}
    \frac{\alpha}{8}
    \Big(
        \frac{\fat[\alpha][\X,\H]}{T}
        \wedge 1
    \Big)
    \le
    \ConvDist[\X,\H,T]
    \le
    \inf_{0<\alpha\le\Lambda}
    \Bigg\{
        C\alpha
        +
        \frac{C}{T}
        \Bigg[
            \Lambda
            +
            \int_\alpha^{\Lambda}
            \fat[c\beta][\X,\H]
            \log^\gamma\!\left(
                \frac{eT\Lambda}{\beta}
            \right)
            \,d\beta
        \Bigg]
    \Bigg\}.
\end{aligned}
\]
\end{theorem}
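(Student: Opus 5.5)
For the lower bound we exhibit a hard distribution. For the upper bound we use the capped-simplex relaxation followed by an offset empirical-process and chaining argument, as sketched in the technical overview.

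\emph{Lower bound.} Fix $\alpha>0$ and let $m=\min\{\fat[\alpha][\X,\H],T\}$. Take an $\alpha$-shattered set $z_1,\dots,z_m\in\H$ with thresholds $s_i$. Let $P$ be a two-level mixture: with probability $1-1/T$, $G$ equals a point whose pairings with the shattering functions sit on the ``high'' side; with probability $1/T$, $G$ is uniform over the $z_i$ (or signed versions $\pm z_i$, using central symmetry of $\H$). This mimics the one-dimensional example with atoms at $-1$ and $1/T$. The intended consequence is that with constant probability a constant fraction of the indices $i\le m$ are never sampled, roughly $m/T$ of the low-probability mass being unseen.

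By \cref{lem:iid-online-reduction-identity} and duality, the distance is $\sup_{u\in\X}\{\langle\mu_P,u\rangle-\max_t\langle G_t,u\rangle\}$. Choose $u=h_\varepsilon$, with signs $\varepsilon$ that are $+$ on unseen coordinates and $-$ on seen ones. Then every sampled $G_t$ falls at least $\alpha/2$ below its threshold, while the mean retains a gain proportional to the unseen mass. Averaging gives roughly $\tfrac{\alpha}{8}(m/T)$. The simpler variant is $P$ uniform on $\{z_i\}$ after centering by thresholds. I expect the constant-bookkeeping, which makes the $1/8$ come out, to require choosing this mixture carefully. Central symmetry of $\X$ lets us replace $u$ by $-u$ whenever needed.

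\emph{Upper bound.} Again by \cref{lem:iid-online-reduction-identity}, the quantity equals $\E\sup_u\{\langle\mu_P,u\rangle-\max_{\lambda\in\Delta_T}\sum_t\lambda_t\langle G_t,u\rangle\}$. A minimax swap over the compact convex $\Delta_T$ and convex $\X$ is needed here, since the distance is $\min_\lambda\|\mu_P-\sum\lambda_tG_t\|_{\Xpolar}$. Restrict $\lambda$ to the capped simplex $\Delta_T^{(2)}$, which can only increase the distance. For a fixed $u$, the inner minimum over $\Delta_T^{(2)}$ puts mass $2/T$ on the top half of the values $x_t=\langle G_t,u\rangle-\langle\mu_P,u\rangle$. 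Hence the deficit is at most $\E\phi(x)-\tfrac{2}{T}\sum_t\phi(x_t)$ with $\phi=(\cdot)_+$ or a shifted hinge, using $\E x=0$. This is an offset process with negative drift, because $2>1$.

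For a finite class of $N$ functions, a Bernstein-type inequality for such nonnegative bounded variables gives $\E\sup\lesssim\Lambda\log N/T$. Then chain. Discretize at dyadic scales $\beta_k$ down to $\alpha$ using $\ell_\infty$ covers on the sample; these have size $\log N(\beta)\lesssim\fat[c\beta]\log^{\gamma}(eT\Lambda/\beta)$ by \citep{alon1993scale}. Control the increments between consecutive scales by offset bounds of order $\beta_k\log N(\beta_{k+1})/T$, absorbing part of the negative drift at each level; this is why the drift constant $2$ is split geometrically. Summing over scales gives the integral, the residual at scale $\alpha$ contributes $C\alpha$, and the coarsest level contributes $C\Lambda/T$.

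The main obstacle is making the chaining compatible with the offset. The negative drift must be shared across infinitely many levels, and the increments $\phi(x^{(k)})-\phi(x^{(k+1)})$ are not themselves nonnegative. I would first try a multiplicative offset, comparing $\E\phi$ against $\kappa_k T^{-1}\sum\phi$ with $\kappa_k\downarrow 1$ decreasing geometrically. I would use Lipschitzness of $\phi$ to bound the increments by $\beta_k$. If this fails, I would fall back on peeling by the level of $\E\phi(x)$.
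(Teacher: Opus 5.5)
\textbf{Upper bound.} Your upper-bound skeleton (duality, capped simplex, hinge domination, offset process, chaining) is the paper's. A data-dependent hinge at the sample median does give a valid pathwise domination, as a special case of the paper's envelope class $\Phi$; the fixed hinge $(\cdot)_+$ does not.

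The gap is the step you flag yourself. It is the heart of the proof, and you leave it open. Two ingredients are missing.

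\emph{Symmetrization.} $\ell_\infty$ covers built on the sample are data-dependent, so a Bernstein bound for a fixed finite class does not apply to them. The paper replaces $Ph$ by a ghost-sample average and swaps each pair with a Rademacher sign. This reduces the problem to $\sum_t[h(Z_t^{-\epsilon_t})-2h(Z_t^{\epsilon_t})]$ conditionally on $2T$ fixed points. There, fixed-design covers controlled by $\mathrm{fat}$ are legitimate, and the finite-class bound $4b(1+\log N)/T$ follows from $\tfrac12e^{\lambda(s-2r)}+\tfrac12e^{\lambda(r-2s)}\le1$ for $r,s\in[0,1]$.

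\emph{Monotone lower approximations.} The chaining runs on $c_{s,t}=\max_{r\le s}(v_{r,t}-\alpha_r/2)_+$. These satisfy $c_{s,t}\le h\le c_{s,t}+\alpha_s$ and are nondecreasing in $s$. The truncated increments $g_{s,t}=\min\{c_{s,t}-c_{s-1,t},\alpha_{s-1}\}$ are therefore nonnegative with envelope $\alpha_{s-1}$, and $h(Z^{-\epsilon})-2h(Z^{\epsilon})\le2\delta+\sum_s[g_s^{-\epsilon}-2g_s^{\epsilon}]$. Every level thus keeps the full factor $2$, so no splitting of the drift is needed. Moreover, $\log|\mathcal G_s|\le\sum_{r\le s}\log|\mathcal V_r|$ sums geometrically into the entropy integral.

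With signed increments, as in your plan, the variance is no longer controlled by the envelope times a nonnegative drift. Bernstein then only gives fluctuations of order $\beta_k\sqrt{\log N/T}$, which brings back the $T^{-1/2}$ rate.

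\textbf{Lower bound.} The lower bound also fails as stated. With total mass $1/T$ on the $z_i$, the common atom $a$ is sampled with probability $1-T^{-T}$. In that event the distance is at most $\|\mu_P-a\|_{\mathcal X^\circ}\le2\Lambda/T$, so the factor $m$ is lost. Each atom needs mass $\Theta(1/T)$: the paper uses $\eta=1/(2T)$ per atom and the rest at $0$, so that $\mathbb E M=m(1-\eta)^T\ge m/2$ atoms go unseen. Your uniform variant fails for $m\ll T$, since each atom is then missed with probability at most $e^{-T/m}$.

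More fundamentally, sign-pattern witnesses with arbitrary thresholds are not enough. On a seen index you only know $\langle z_i,u\rangle\le s_i-\alpha/2$. The seen atoms, a constant fraction, can therefore contribute as little as $-\eta\Lambda$ each to $\langle\mu_P,u\rangle$. This swamps the gain $\eta\alpha/2$ per unseen atom when $\Lambda\gg\alpha$. Distinct thresholds also give no common level for $\max_t\langle G_t,u\rangle$.

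The missing idea is the evaluation-cube equivalence. By convexity and central symmetry of $\mathcal X$ and support-function duality, $\alpha$-fat-shattering of $z_{1:m}$ with any thresholds is equivalent to $(\alpha/2)B_\infty^m\subseteq\{(\langle u,z_i\rangle)_{i\le m}:u\in\mathcal X\}$. This gives $u_S\in\mathcal X$ with pairing exactly $0$ on the seen atoms and on the atom $0$, and $\alpha/2$ on the unseen ones. Every point of the sampled hull then has pairing $0$ with $u_S$, while $\langle u_S,\mu_P\rangle=\alpha\eta M/2$. This yields $\alpha m/(8T)$.
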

}

Using the known values of $\fat$ for the $(p, q)$-norm case, cf. \cref{eq:fat_shattering_values} and \eqref{eq:fat-shattering-11}, we obtain that our previous bounds nearly match in this case.

\begin{corollary}%
\label{prop:iid-high-dimensional-instantiation}
\linktoproof{prop:iid-high-dimensional-instantiation}
Let \(1\le p,q\le\infty\) and \(T\ge2\). If \(d\ge T\), then
\[
    \left(\frac{d}{T}\right)^{(\frac1q-\frac1p)_+}
    \frac{1}{T^{\frac1p-(\frac1q-\frac12)_+}}
    \lesssim_{p,q}
    \ConvDist[p,q^\ast,T]
    =
    \bigotildepl{p,q}{
        \left(\frac{d}{T}\right)^{(\frac1q-\frac1p)_+}
        \frac{1}{T^{\frac1p-(\frac1q-\frac12)_+}}
    }.
\]
If \(d\le T\), then
\[
    \ConvDist[p,q^\ast,T]
    =
    \widetilde{\Theta}_{p,q}\!\left(
        \frac{d^{1-\frac1p+(\frac1q-\frac12)_+}}{T}
    \right).
\]
\end{corollary}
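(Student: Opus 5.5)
The plan is to instantiate both sides of \cref{thm:iid-centered-convex-hull} for $(\X,\H)=(B_p^d,B_{q^\ast}^d)$, using the known fat-shattering values of $\cL[p,q^\ast][d]$ from \cref{eq:fat_shattering_values} and \eqref{eq:fat-shattering-11}. Write $\theta\defi\theta_{p,q}=\frac1p-(\frac1q-\frac12)_+$.

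\textbf{Reduction to $p\le q$.} First handle the factor $(d/T)^{(1/q-1/p)_+}$, which is nontrivial only when $p>q$. There $B_p^d\subseteq d^{1/q-1/p}B_q^d$, so $\Lambda=d^{1/q-1/p}$. The fat values in that regime are those of the $(q,q)$ case rescaled by $\Lambda$, i.e.\ $\fat[\alpha]\approx\min\{(\Lambda/\alpha)^{1/\theta},d\}$ up to logarithms, with $\theta=\theta_{q,q}$. I will therefore carry out the computation for $p\le q$ (where $\Lambda=1$) with a generic scale $\Lambda$ and generic exponent $\theta$. Only at the end do I substitute $\Lambda=d^{(1/q-1/p)_+}$. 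The factor $T^{-(1/q-1/p)_+}$ then comes from the fact that the exponent of $T$ in the answer is $\theta_{p,q}$ rather than $\theta_{q,q}$, since the two differ by exactly $1/p-1/q$. This is the step I expect to be the main obstacle. It requires checking carefully that the cited fat values take this rescaled form in every sub-case ($q\le 2$ versus $q>2$, with $p>q$), and that the logarithmic factor $1+\log_+\frac{1}{\alpha d^{\theta}}$ adjusts correctly under the rescaling.

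\textbf{Lower bound.} Use the form $\fat[\alpha]\gtrsim\min\{\alpha^{-1/\theta},\,d\}$, up to constants depending on $p,q$ and logarithms.
\begin{itemize}
\item If $d\ge T$, choose $\alpha\asymp T^{-\theta}$. Then $\fat[\alpha]\gtrsim T$, so $\frac{\alpha}{8}\bigl(\frac{\fat[\alpha]}{T}\wedge1\bigr)\gtrsim T^{-\theta}$.
\item If $d\le T$, choose $\alpha\asymp d^{-\theta}$, so that the fat-shattering dimension saturates at $\asymp d$. This gives $\alpha d/T=d^{1-\theta}/T$, which is exactly the claimed rate because $1-\theta=1-\frac1p+(\frac1q-\frac12)_+$.
\end{itemize}

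\textbf{Upper bound.} Use $\fat[c\beta]\lesssim\min\{\beta^{-1/\theta},\,d(1+\log_+\frac{1}{\beta d^\theta})\}$ up to logarithms, and bound the logarithmic weight $\log^\gamma(eT\Lambda/\beta)$ by a polylogarithm in $T$, since $\alpha\ge 1/T$ will hold.
\begin{itemize}
\item \emph{Case $d\ge T$.} Note $\theta\le1$. If $\theta<1$, then $\int_\alpha^1\beta^{-1/\theta}d\beta\lesssim\alpha^{1-1/\theta}$, and the bound becomes $\alpha+\alpha^{1-1/\theta}/T$. This is minimized at $\alpha=T^{-\theta}$, giving $\widetilde O(T^{-\theta})$. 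If $\theta=1$ (e.g.\ $p=1$, $q\ge2$), the integral is $\log(1/\alpha)$; take $\alpha=1/T$ to get $\widetilde O(1/T)$.
\item \emph{Case $d\le T$.} Take $\alpha\asymp d^{1-\theta}/T$. Split the integral at $d^{-\theta}$. Above $d^{-\theta}$, use the $\beta^{-1/\theta}$ bound, which contributes $\widetilde O(d^{1-\theta})$. Below $d^{-\theta}$, use the $d(1+\log)$ bound, which contributes at most $d\cdot d^{-\theta}\cdot\log T$. Both pieces, divided by $T$, are $\widetilde O(d^{1-\theta}/T)$. The additive $\Lambda/T$ term is lower order, since $d^{1-\theta}\ge1$.
\end{itemize}
Both upper bounds match the corresponding lower bounds up to logarithmic factors, which gives the $\widetilde\Theta$ statements.
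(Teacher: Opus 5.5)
Your proposal is correct and follows essentially the paper's route. Both arguments instantiate \cref{thm:iid-centered-convex-hull} and \cref{prop:iid-lower-bound-fat} with the profile $\fat[\alpha]\asymp\min\{(\Lambda/\alpha)^{1/s},d\}$, where your $\theta_{q,q}$ for $p>q$ is the paper's $s=(\frac1q-\frac1p)_+ + \frac1p-(\frac1q-\frac12)_+$; both take $\alpha\asymp\Lambda T^{-s}$ when $d\ge T$ and split the entropy integral at $\Lambda d^{-s}$ when $d\le T$. Two small differences when $d\le T$: the paper takes $\alpha=\Lambda d^{-s}/T$ rather than your $\Lambda d^{1-s}/T$, and it obtains the saturated lower bound from the evaluation cubes of \cref{lem:evaluation_cubes} instead of reading it off \cref{eq:fat_shattering_values}, which is equivalent.

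The step you flag as the main obstacle is immediate, because \cref{eq:fat_shattering_values} is already stated in the $\Lambda$-rescaled form for $p\ge q$. The only case you leave implicit is the endpoint $p=q=\infty$: there $\theta=0$, so $\alpha^{-1/\theta}$ is undefined, and one uses $\fat[\alpha]=d$ directly, as the paper does.
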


Observe that, as we advanced earlier, when $d \leq T$, the decay is not exponential.
We now provide another lower bound, based on a coupon-collector argument. It is useful for \(p=1\), \(q \geq 2\) in the high-dimensional regime $d \geq T$ and regimes trivially close to it, where the corresponding lower bound in \cref{prop:iid-high-dimensional-instantiation} is $\bigomega{1 / T}$. In that case, it contains an additional logarithmic factor. In other cases the fat-shattering lower bound in \cref{prop:iid-high-dimensional-instantiation} dominates for $T$ greater than some constant.

\begin{proposition}[Coupon-collector lower bound for the iid case]\label{thm:coupon_lower_bound} \linktoproof{thm:coupon_lower_bound}
There exists a universal constant \(c>0\) such that, for every
\(p,q\in[1,\infty]\) and \(T\ge3\), and every dimension \(d\ge T\), we have
\[
    \ConvDist[p,q^\ast,T]
    \ge c\frac{\log T}{T}.
\]
The lower bound is witnessed by a mean-zero distribution supported on
\(B_{q^\ast}^d\).
\end{proposition}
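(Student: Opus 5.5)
We will construct a mean-zero distribution on $B_{q^\ast}^d$ whose convex hull of $T$ samples stays, with constant probability, at $\ell_{p^\ast}$-distance of order $\log T/T$ from the origin. It suffices to use only the first $n\defi T$ coordinates, so $d\ge T$ is all we need. Since $\|\cdot\|_{q^\ast}\le\|\cdot\|_1$, any distribution supported on $B_1^n$ is supported on $B_{q^\ast}^d$. Moreover $\|z\|_{p^\ast}\ge\|z\|_\infty$, so it is enough to lower bound the $\ell_\infty$-distance. The construction therefore does not depend on $p,q$.

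\textbf{Construction.} We take a heavy atom plus a rare light coordinate family, mirroring the 1D example of the introduction. Let $e_1,\dots,e_n$ be the standard basis with $n=T$, and let $a>0$ be a small mass parameter. The distribution $P$ puts mass $a/n$ on each of the points $-e_i$, and mass $1-a$ on the point $v\defi\frac{a}{(1-a)n}\sum_i e_i$. Then $\|v\|_1=a/(1-a)\le1$, the support lies in $B_1^n$, and the mean is $(1-a)v-\frac{a}{n}\sum_i e_i=0$.

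Take $a\approx \frac{n\log n}{2T}$, capped at $1/2$; for $n=T$ this gives the cap, since we need the number of expected light draws, $aT$, to be comparable to $\tfrac12 n\log n$. Refining, we pick $n=\lceil T/\log T\rceil$ and $a=1/2$. Then roughly $T/2$ light draws are distributed uniformly over $n$ coordinates. That is about $\tfrac12 n\log n$ draws, so by the coupon-collector concentration (second moment method on the number of uncollected coupons), with constant probability at least one index $i$ is never drawn. This choice of $n$ is the step to get right.

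\textbf{Distance bound.} On the event that index $i$ is missing, every sample has $i$-th coordinate either $v_i=\frac{a}{(1-a)n}\ge\frac{1}{n}\gtrsim\frac{\log T}{T}$ or $0$ (from the $-e_j$, $j\ne i$). Hence any point of $\conv\{G_{1:T}\}$ has $i$-th coordinate $\ge0$, but this does not yet keep it away from the origin. So we modify the construction: replace the points $-e_j$ by $-e_j+w$, with $w\defi \frac{c'}{n}\sum_k e_k$ a common positive shift, and adjust masses so the mean stays $0$. Then on the missing-$i$ event every sample has $i$-th coordinate at least $\min(v_i,w_i)\gtrsim 1/n\asymp \log T/T$. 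Therefore the $\ell_\infty$-distance from $0$ to the hull is $\gtrsim\log T/T$.

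\textbf{Remaining checks.} We must verify that the modified masses keep the mean at zero with the support in $B_1$. With the atom $v=\beta\mathbf 1$ at mass $1/2$ and the points $-e_j+w$ at total mass $1/2$, the mean condition reads $\tfrac12\beta+\tfrac12(c'/n-1/n)=0$, i.e. $\beta=(1-c')/n$. We take $c'=1/2$, which gives $\beta=1/(2n)>0$ and $w_i=1/(2n)$. Both are of order $1/n$ with $\ell_1$ norms at most $3/2$; we rescale everything by $2/3$ to land in $B_1^n$. Multiplying the constant-probability event by the $\Omega(\log T/T)$ distance then gives the bound in expectation.

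The main obstacle is the coupon-collector step: we need a constant lower bound on the probability that some coordinate among $n\asymp T/\log T$ is missed after a Binomial$(T,1/2)$ number of uniform draws. We would handle it with the standard Poissonization or second moment computation on the count of missed indices, choosing $n=\lfloor T/\log T\rfloor$ with a constant small enough that the expected count is $\ge1$ and its variance is controlled.
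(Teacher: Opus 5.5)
Your final construction is correct and is essentially the paper's proof: with $c'=1/2$ you get $v=w$, so your law is the centered family $w-e_j$ plus an extra atom at $w$ (the paper takes the uniform law on $\tfrac12(e_i-\tfrac1k\mathbf 1)$ with $k\log k\le T$, which needs no extra atom), a missing index makes that coordinate constant on the whole hull, and a second-moment coupon-collector bound with $n\asymp T/\log T$ concludes. One correction to your last paragraph: the constant in $n=cT/\log T$ must be large, not small, since $\mathbb E M\approx n e^{-T/(2n)}\approx c\,T^{1-1/(2c)}/\log T$ requires $c>1/2$; your explicit choice $n=\lceil T/\log T\rceil$ already satisfies this.
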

For any law \(P\) supported on \(B_{q^\ast}^d\), the law of
\((G-\mu_P)/2\) is centered and supported on the same ball. Against its iid
linear losses, with $G_{1:T} \sim P$, every player has $0$ expected loss. Thus the
duality calculation in \cref{lem:iid-online-reduction-identity}, followed by
taking the supremum over \(P\), gives
\[
    \frac{\Vmax[p,q^\ast,T]}{T}
    \ge
    \sup_{\operatorname{supp}(P)\subseteq B_{q^\ast}^d}
    \mathbb E\operatorname{dist}_{p^\ast}
    \left(0,\conv\{(G_t-\mu_P)/2:t\in[T]\}\right)
    =\frac12\ConvDist[p,q^\ast,T].
\]
The factor $1/2$ ensures $(G_t-\mu_P)/2\in B_{q^\ast}^d$, since $\|G_t-\mu_P\|_{q^\ast}\le2$; translation invariance and homogeneity of distance give the equality.

The purpose of the current section is noting that a special case of our problem yields many interesting results, adjacent to already studied probability questions, like Wendel's theorem \citep{wendel1962problem}.  In the next section, we study the case in which losses are adaptively chosen by an adversary, which yields the optimization rates. The iid case is a special case of the latter and the proofs are essentially the same, except that at the end of the upper bounds proofs and for the lower bounds, we have to distinguish between using predictable trees and sequential coverings, leading to a sfat dependence, versus fixed settings and nonsequential covering numbers leading to a fat dependence.

\section{Adaptive rate and sequential convex-hull approximation}
We prove a fully sequential version in which the distributions are chosen adaptively.  Although the game itself involves affine losses, the associated linear class captures the order of the upper bound, and the offset coordinate will be shown to contribute only a low order term.  Consequently, the final upper bound is expressed in terms of the sequential fat-shattering dimension of the \emph{underlying linear class}.

Our final result can be compared in spirit to the one in \citep[Proposition 9]{rakhlin2015online}, where the authors obtained some bounds for the minimax value of regular online learning via Rademacher complexities and sequential fat-shattering expressions. However, the symmetrizations leading to the Rademacher complexity are possible in online learning due to all expressions involving sums. Treating $\max$-regret requires a substantially different treatment. We now introduce some notation.

{
At each time \(t=1,\ldots,T\), after observing the history
\(
    y_{1:t-1}=((g_1,b_1),\ldots,(g_{t-1},b_{t-1})),
\)
the adversary chooses a probability distribution
\(P_t(\cdot\mid y_{1:t-1})\) supported on \(\cZ[\X,\H]\). We write
\(Y_t=(G_t,B_t)\sim P_t(\cdot\mid Y_{1:t-1})\). For a function
\(f:\cZ[\X,\H]\to\mathbb R\), set
\[
    (P_tf)(y_{1:t-1})
    \defi
    \int_{\cZ[\X,\H]}f(g,b)\,dP_t(g,b\mid y_{1:t-1}),
\]
and abbreviate this quantity as \(P_tf\) along a realized path. The predictable means of the two coordinates are\footnote{In the general Banach-space setting, $E^\ast$-valued integrals are understood weakly: $\mu_t\in E^\ast$ is defined by $\langle \mu_t,u\rangle=\int\langle g,u\rangle\,dP_t(g,b\mid y_{1:t-1})$ for every $u\in E$. We only use $\mu_t$ through such duality pairings.}
\[
    \mu_t(y_{1:t-1})
    \defi
    \int_{\cZ[\X,\H]}g\,dP_t(g,b\mid y_{1:t-1}),
    \qquad
    \beta_t(y_{1:t-1})
    \defi
    \int_{\cZ[\X,\H]}b\,dP_t(g,b\mid y_{1:t-1}).
\]
Each sample \(Y_t=(G_t,B_t)\) defines the affine score
\(u\mapsto\langle G_t,u\rangle+B_t\). Define the normalized one-sided approximation functional
\[
\newtarget{def:normalized-approximation-functional}{\Gf[\X,\H,T](Y_{1:T})}
\defi 
\sup_{u\in\X}\left\{\left\langle \frac1T\sum_{t=1}^T\mu_t,u\right\rangle+\frac1T\sum_{t=1}^T\beta_t
-\max_{\lambda\in\Delta_T}\sum_{t=1}^T\lambda_t(\langle G_t,u\rangle+B_t)\right\}.
\]
For these affine losses, the conditional mean in \cref{thm:generalized_triplex_like_theorem} is \(m_t(u)=\langle\mu_t,u\rangle+\beta_t\). Thus that proposition gives
\[
    \Vmax[\X,\H,T]
    \le
    T
    \left(
        \sup_{r_t}\mathbb E_{Y_t\sim r_t}
    \right)\!{\vphantom{\Big|}}_{t=1}^{T}
    \Gf[\X,\H,T](Y_{1:T}),
\]
where each \(r_t\) is supported on \(\cZ[\X,\H]\). Every function in
\(\cL[\X,\H]\) is bounded in
\([-\LambdaPair[\X,\H],\LambdaPair[\X,\H]]\). When \(p\le q\), \((\X,\H)=(B_p^d,B_{q^\ast}^d)\) recovers the preceding
\(B_{q^\ast}^d\times[-1,1]\) setup.
}

We now present our main theorem. In order to show the conclusion for the $(p, q)$-norm case, we make use of $\sfat$ bounds the linear classes between finite-dimensional $p$-norm spaces $\cL[p,q^\ast][d]$, where $E=\mathbb R^d, \X=B_p^d, \H=B_{q^\ast}^d$, which we prove in the next subsection.

\begin{theorem}\label{thm:fully-sequential-bound} \linktoproof{thm:fully-sequential-bound}
{
Under \cref{ass:minimax_theorem_consequence}, let \(T\ge1\), set
\(\Lambda\defi\LambdaPair[\X,\H]\), and write
\(\sfat[\eta][\X,\H]\defi\sfat[\eta](\cL[\X,\H])\). There are
universal constants \(C,c,\gamma>0\) such that
\[
\begin{aligned}
\sup_{\alpha>0}
\frac{\alpha}{2}
\left(
    \frac{\sfat[\alpha][\X,\H]}{T}
    \wedge 1
\right)
&\le \frac{\Vmax[\X,\H,T]}{T} \\
&\le
\inf_{0<\alpha\le\Lambda}
\Bigg\{C\alpha+\frac{C}{T}\Bigg[
\Lambda
\left(1+\log\frac{e\Lambda}{\alpha}\right)
+
\int_\alpha^{\Lambda}
\sfat[c\beta][\X,\H]
\log^\gamma\left(\frac{eT\Lambda}{\beta}\right)\,d\beta
\Bigg]\Bigg\}.
\end{aligned}
\]
}
Thus, if \(p\le q\) and \(T\le d\), then
\[
    \frac{1}{T^{\frac{1}{p}-(\frac{1}{q}-\frac{1}{2})_+}}
    \lesssim_{p,q}
    \frac{\Vmax[p,q^\ast,T]}{T}
    =
    \bigotildepl{p,q}{
        \frac{1}{T^{\frac{1}{p}-(\frac{1}{q}-\frac{1}{2})_+}}
    }.
\]
More generally, if $\sfat[\alpha](\cL[\X,\H])
=\widetilde\Theta((\LambdaPair[\X,\H]/\alpha)^r)$ for some \(r\ge1\) and all $\alpha \leq \LambdaPair[\X,\H]$, then \begin{equation}\label{eq:general_sfat_rate}
    \frac{\Vmax[\X,\H,T]}{T}
=\widetilde\Theta(\LambdaPair[\X,\H]T^{-1/r}).
\end{equation}
\end{theorem}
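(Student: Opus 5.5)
The plan has three parts: the upper bound, the lower bound, and the two instantiations.

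\textbf{Upper bound.} By \cref{thm:generalized_triplex_like_theorem} it suffices to bound $\sup_\pi\mathbb E\,\Gf[\X,\H,T](Y_{1:T})$ uniformly over adaptive adversaries.

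First, restrict the maximum over $\lambda\in\Delta_T$ to the capped simplex $\Delta_T^{(2)}=\{\lambda\in\Delta_T:\lambda_t\le 2/T\}$. This only increases $\Gf$. For a fixed $u$, write $Z_t(u)=\langle G_t,u\rangle+B_t$ and $m_t(u)=P_tZ_t(u)$. The best capped combination then equals the average of the top half of the values $Z_t(u)$, so
\[
\max_{\lambda\in\Delta_T^{(2)}}\sum_t\lambda_tZ_t(u)\ \ge\ \frac1T\sum_t Z_t(u)+\frac1T\sum_t\bigl(Z_t(u)-\tau\bigr)_+
\]
for any threshold $\tau$, up to a rearrangement identity. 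Consequently the deficit $\frac1T\sum_t m_t-\max_\lambda(\cdot)$ is bounded by an \emph{offset} martingale process of the form
\[
\frac1T\sum_t\bigl(P_t\phi_u-\kappa\,\phi_u(Y_t)\bigr)
\]
with $\kappa>1$ and $\phi_u\ge0$ built from $(Z_t(u)-m_t(u))$. The strict negative drift is what yields fast rates.

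Second, control this offset process. For a finite class of size $N$, a Freedman/Bernstein-type exponential supermartingale bound shows that, because the variance is dominated by the mean for nonnegative bounded $\phi$, the expected supremum is $O(\Lambda\log N/T)$. To extend to the full class, chain over sequential $\ell_\infty$ covers of $\cL[\X,\H]$ on the predictable tree generated by the adversary at dyadic scales $\beta_k$ from $\Lambda$ down to $\alpha$. The scale-$\alpha$ residual contributes $C\alpha$. Each increment between consecutive scales is again handled by the offset bound, contributing $\beta_k\log\Ninfseq(\beta_k)/T$. Summing, and using $\log\Ninfseq(\beta,\cL,T)\lesssim\sfat[c\beta]\log^{\gamma}(eT\Lambda/\beta)$ from \citep[Corollary~4]{rakhlin2010online}, turns the sum into the integral in the statement.

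The offsets $B_t$ form a one-dimensional class in $[-\Lambda,\Lambda]$. A cover of size $O(\Lambda/\beta)$ at each scale adds the term $\Lambda(1+\log(e\Lambda/\alpha))/T$. I expect this step to be the main obstacle: making the offset/negative-drift argument compatible with sequential (tree-based) chaining. The drift must be preserved at every chaining level, for instance by splitting $\kappa$ geometrically across levels. In addition, the nonlinearity $(\cdot)_+$ has to be handled via its 1-Lipschitzness so that covers of $\cL$ transfer to covers of $\phi$.

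\textbf{Lower bound.} Take an $\alpha$-shattered tree of depth $m=\sfat[\alpha]\wedge T$ with witnesses $s_t$. The adversary plays linear losses along the tree, walking it with random signs: it spends $\lfloor T/m\rfloor$ rounds per node, each choosing $\pm z_t$ so that the conditional mean is a given point and the player's expected loss is fixed regardless of its action. Using the affine offsets $b_t=-s_t$, which are in range since $|s_t|\le\Lambda$, each shattering vector $u_\varepsilon$ makes every observed loss at most $-\alpha/2$. Therefore $T\max_t\ell_t(u_\varepsilon)\le-T\alpha/2$ on a fraction $m/T$ of rounds, while the sum of the player's losses has mean zero. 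This gives $\Vmax/T\ge\frac{\alpha}{2}(\sfat/T\wedge1)$, mirroring the fat lower bound in \cref{thm:iid-centered-convex-hull} with trees in place of sets.

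\textbf{Instantiations.} For $(p,q)$ with $T\le d$, plug in the sfat bounds $\sfat[\alpha]\asymp\alpha^{-1/\theta_{p,q}}$ from \cref{cor:explicit_regimes}. Choose $\alpha\asymp T^{-\theta_{p,q}}$: the integral then gives $\widetilde O(T^{-\theta})$ since $1/\theta\ge1$, and the lower bound at the same $\alpha$ matches. The general case \eqref{eq:general_sfat_rate} follows identically with $\alpha=\Lambda T^{-1/r}$. For $r\ge1$ the integral $\int_\alpha^\Lambda(\Lambda/\beta)^r d\beta$ is $\widetilde O(\Lambda^r\alpha^{1-r})$ (logarithmic when $r=1$), which balances to $\widetilde O(\Lambda T^{-1/r})$.
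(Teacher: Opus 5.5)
Your upper bound follows the paper's plan (online reduction, capped simplex, an offset process with factor $2$, chaining, sfat). However, the step you flag as the obstacle is where the proof actually lives, and the fix you suggest does not work.

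\textbf{Domination step.} Your displayed inequality is false for arbitrary $\tau$: take all $Z_t(u)=0$ and $\tau=-1$. What is true is the LP/CVaR identity $\max_{\lambda\in\Delta_T^{(2)}}\sum_t\lambda_tz_t=\min_{\tau}\{\tau+\frac2T\sum_t(z_t-\tau)_+\}$. Combined with $m_t-\tau\le P_t(\cdot-\tau)_+$, it bounds the deficit by $\sup_\tau[\frac1T\sum_tP_t(\cdot-\tau)_+-\frac2T\sum_t(z_t-\tau)_+]$; the paper's \cref{lem:one-dimensional-general} proves a bound of this type with the larger class $\Phi$. Either way you get a supremum over a class of nonnegative monotone Lipschitz functions of the uncentered score, not a single $\phi_u$ built from $Z_t-m_t$. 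Covering that class is what produces the $\Lambda\log(e\Lambda/\alpha)/T$ term (\cref{lem:affine-offset-free-entropy}). The offsets do not cost anything: adding $b_t$ to a cover of $\langle g,u\rangle$ covers the affine score.

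\textbf{Chaining, the real gap.} With general kernels $P_t$ there is no binary ``predictable tree generated by the adversary''. Moreover, the compensator $P_th$ is not a function value that a sequential cover controls. The paper first symmetrizes with a tangent ghost sample (two independent children per node), reducing to $\mathbb E\sup_h\sum_t[h(Z_t^{-\epsilon_t}(\epsilon_{<t}))-2h(Z_t^{\epsilon_t}(\epsilon_{<t}))]$ on a pair-tree. It then proves the finite-class estimate for this Rademacher process (\cref{lem:finite-class-offset}). Your supermartingale bound for fixed functions of $Y_t$ is not enough, because the finite classes in the chaining are sign-indexed value trees.

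The drift must also survive at every chaining level, and splitting $\kappa$ across levels cannot ensure this. Ordinary increments $\pi_sh-\pi_{s-1}h$ are signed, and for a signed $a$ the mean $-\frac12(a^++a^-)$ of $a^{-\epsilon}-2a^{\epsilon}$ can be positive. So no level-wise $\log N/T$ bound is available, and you lose the $\frac1T\int\log N$ form. The missing idea is one-sided chaining (\cref{prop:sequential-offset}). The lower approximations $c_{s,t}=\max_{r\le s}(v_{r,t}-\alpha_r/2)_+\le h$ have increments, truncated to $[0,\alpha_{s-1}]$, that are nonnegative and sum to $c_{S,t}\ge h-2\delta$. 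Each level therefore keeps the full factor $2$, \cref{lem:finite-class-offset} applies with envelope $\alpha_{s-1}$, and the levels sum to the entropy integral.

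\textbf{Lower bound.} Your construction does not work as written.
\begin{itemize}
\item If a node is repeated for $\lfloor T/m\rfloor$ rounds with one sign, the player sees $\varepsilon_t$ after the first round. It can then minimize the now-known loss $-\varepsilon_t(\langle g_t,x\rangle-s_t)$, reaching values far below $-\alpha/2$ (possibly $-(2\Lambda-\alpha/2)$). Its losses then no longer have mean zero, and the bound can become vacuous.
\item With fresh signs at each repetition, no witness $u_\varepsilon$ satisfies all the constraints.
\item The offsets must be sign-dependent, $(G_t,B_t)=(-\varepsilon_tg_t(\varepsilon_{<t}),\varepsilon_ts_t(\varepsilon_{<t}))$, not $b_t=-s_t$.
\item The phrase ``$T\max_t\ell_t(u_\varepsilon)\le-T\alpha/2$ on a fraction $m/T$ of rounds'' hides the actual issue. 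The maximum ranges over all $T$ rounds, so the $T-m$ unused rounds need losses that keep it at most $-\alpha/2$ without being exploitable. Zero losses would give $\max_t\ell_t\ge0$ and kill the comparator term.
\end{itemize}
\cref{prop:lower-bound-sfat} uses one round per node and pads with the constant loss $(0,-\alpha/2)$. The player pays exactly $-\alpha/2$ there whatever it does, and the witnesses still satisfy $\max_t\ell_t(u_\varepsilon)\le-\alpha/2$. The value is therefore at least $-\frac{(T-m)\alpha}{2}+\frac{T\alpha}{2}=\frac{m\alpha}{2}$.

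Your instantiations are fine, with two checks: at $\alpha\asymp T^{-\theta_{p,q}}$ the $d$-dependent term of \cref{cor:explicit_regimes} must be inactive when $T\le d$, and $p=q=\infty$ (where $\theta_{p,q}=0$) must be treated separately.
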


For \(p>q\), the last statement, along with the $\max$-regret online-to-batch conversion in \cref{lemma:m-online-to-batch}, recovers the known matching upper and lower bounds up to log factors from classical constructions and mirror descent \citep{pmlr-v40-Guzman15,guzman2015information}. Moreover, for $p \leq q$ and $T \leq d$, we obtain our main optimization result, nearly matching the lower bounds in citet{\citep{pmlr-v40-Guzman15,guzman2015information}}.
\begin{corollary}[First-order optimization bound]
\label{cor:first-order-optimization-sfat}\linktoproof{cor:first-order-optimization-sfat}
    Let \(1\le p \le q\le\infty\), \(d,T\in\mathbb N\), and ${\cal F}$ be the class of convex $1$-Lipschitz functions $\R^d \to \R$ in $\|\cdot\|_q$. There is a deterministic algorithm that for any $f\in {\cal F}$, after making $T$ queries to a first-order oracle, 
computes $\widehat{x}_T$ such that
\[
    f(\widehat x_T)-\min_{u\in B_p^d}f(u) = \bigotildepl{p,q}{
        \frac{1}{
            T^{\frac1p-(\frac1q-\frac12)_+}
        }
    }.
\]
\end{corollary}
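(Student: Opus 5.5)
The plan is to chain the upper bound of \cref{thm:fully-sequential-bound} with the max-regret online-to-batch conversion, \cref{lemma:m-online-to-batch}, for $(\X,\H)=(B_p^d,B_{q^\ast}^d)$. Since $f$ is convex on $\R^d$ and $1$-Lipschitz in $\|\cdot\|_q$, every subgradient satisfies $\|g\|_{q^\ast}\le 1$, so a valid oracle returns subgradients in $\H$. Because $p\le q$, we have $\LambdaPair[p,q^\ast]=1$.

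\textbf{Normalization.} The game assumes $f(0)=0$. I would spend one extra query at $0$ and work with $\tilde f\defi f-f(0)$. This changes neither minimizers nor optimization error, and it costs $T+1$ instead of $T$ queries, which only changes constants.

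\textbf{The algorithm.} At step $t$ the algorithm queries $x_t$ and receives $(f(x_t),g_t)$. It feeds the affine loss $\ell_t(u)=\tilde f(x_t)+\langle g_t,u-x_t\rangle$ to the player's strategy, and finally outputs $\widehat x_T=\bar x_T\in B_p^d$. As computed in \cref{sec:notation-preliminaries}, after the common shift by $\Lambda$ the losses lie in the admissible affine class, and this shift leaves max-regret unchanged.

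\textbf{Determinism.} The player's strategy is taken from the definition \eqref{eqn:max_regret_value} of $\Vmax$, whose outer operation is an infimum over deterministic actions $x_t\in\X$. Fixing $\eta>0$ and choosing $\eta/T$-optimal actions at each round yields a deterministic strategy whose max-regret is at most $\Vmax[p,q^\ast,T]+\eta$ against every adaptive loss sequence, and in particular against the oracle's. \Cref{ass:minimax_theorem_consequence} holds here since the sets are compact and finite-dimensional. Each $\ell_t$ is an affine minorant of $\tilde f$ that is tight at $x_t$, so \cref{lemma:m-online-to-batch} gives
\[
f(\bar x_T)-\min_{u\in B_p^d} f(u)\le \frac{\Vmax[p,q^\ast,T]+\eta}{T}.
\]

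\textbf{The rate and the dimension restriction.} \Cref{thm:fully-sequential-bound} gives $\Vmax[p,q^\ast,T]/T=\widetilde O_{p,q}(T^{-(1/p-(1/q-1/2)_+)})$, but only when $T\le d$. If $d<T$, I would embed the problem into $\R^{d'}$ with $d'=T$ via $F(x,y)\defi f(x)$. Then:
\begin{itemize}
\item $F$ is still convex and $1$-Lipschitz in $\|\cdot\|_q$, and its subgradients $(g,0)$ lie in $B_{q^\ast}^{d'}$.
\item The oracle for $F$ is simulated by the oracle for $f$ at the projected point $P(x,y)\defi x$, and $P$ maps $B_p^{d'}$ into $B_p^d$.
\item $\min_{B_p^{d'}}F=\min_{B_p^d}f$, and $f(P\widehat x)=F(\widehat x)$.
\end{itemize}
So the $d'=T$ guarantee transfers to the original problem. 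Finally, take $\eta=1/T$, which is absorbed into the bound.

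\textbf{Main obstacle.} The delicate step is the determinism claim: the sup–inf in $\Vmax$ must be realized by actual, non-randomized $\eta$-optimal actions against an adaptive (non-oblivious) oracle. I would argue this directly from the inf–sup form in \eqref{eqn:max_regret_value}, where the player's inner choice is a deterministic point, rather than from the randomized form after $\circled{1}$. The exponential-time computability remark in the introduction is not needed for the statement.
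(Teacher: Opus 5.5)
Your proposal is correct and follows the paper's route. You apply the max-regret online-to-batch lemma to a near-minimax deterministic strategy for the inf--sup game in \eqref{eqn:max_regret_value}, and read off the rate from \cref{thm:fully-sequential-bound}. Your extra steps are sound, and they make explicit points the paper treats briefly: the paper gets determinism by discretizing that same deterministic definition (which also yields an exponential-time procedure), simply normalizes $f(0)=0$ ``without loss of generality'' instead of spending a query at $0$, and does not need your embedding into $\R^{T}$ because the upper-bound half of the theorem's proof never uses $T\le d$.
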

Similarly, the general optimization oracle complexity upper bound derived from the last part of \cref{thm:fully-sequential-bound} and \cref{lemma:m-online-to-batch} gives a partial answer to \citet[Section 10.1.2, Q3]{sridharan2012learning}, which conjectured a similar upper bound depending on $\fat[\alpha](\cL[\X,\H])$ instead of $\sfat[\alpha](\cL[\X,\H])$, for any general value of $\fat[\alpha]$, instead of our polynomial decay; see \cref{sec:further_related_work}.

\subsection{Sequential Fat-Shattering of Linear Classes between $B_p^d$ and $B_{q^\ast}^d$}

Set \(L_\alpha\defi 1+\log_+(ed/\alpha)\).
Since every function in \(\cL[p,q^\ast][d]\) takes values in
\([-\LambdaPair[p,q^\ast],\LambdaPair[p,q^\ast]]\), we have
\(\sfat[\alpha](\cL[p,q^\ast][d])=0\) whenever
\(\alpha>2\LambdaPair[p,q^\ast]\) since we cannot have function value separation greater than the range of the function class. Thus the next theorem is stated on the
nontrivial scale \(0<\alpha\le2\LambdaPair[p,q^\ast]\). At
\(p=q=\infty\), we interpret its second line as
\(\sfat[\alpha](\cL[\infty,1][d])
\asymp d(1+\log_+(1/\alpha))\) on this same scale.
\begin{theorem}[Near-optimal sfat bounds]\label{cor:explicit_regimes} \linktoproof{cor:explicit_regimes}
    Assume \(0<\alpha \le 2\LambdaPair[p,q^\ast] \). Then,
\(\sfat[\alpha](\cL[p,q^\ast][d])\) lies, up to constants depending only on
\(p,q\), between the following quantity and \(L_\alpha\) times it:
\[
\begin{cases}
\displaystyle
\min\left\{
    \alpha^{-\left(\frac{1}{p}-(\frac{1}{q}-\frac{1}{2})_+\right)^{-1}},\;
    d\left(
        1+\log_+
        \frac{1}{\alpha d^{\frac{1}{p}-(\frac{1}{q}-\frac{1}{2})_+}}
    \right)
\right\},
& p<q,\\[4mm]
\displaystyle
\min\left\{
    \left(\frac{d^{1/q-1/p}}{\alpha}\right)^{\max\{q,2\}},\;
    d\left(
        1+\log_+
        \frac{d^{1/q-1/p}}{\alpha d^{1/\max\{q,2\}}}
    \right)
\right\},
& p\ge q.
\end{cases}
\]

\end{theorem}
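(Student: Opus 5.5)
The lower and upper bounds are proved separately, and in both directions the work is reduced to known estimates for the nonsequential fat-shattering dimension and for dual (mirror-descent) geometries.

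\emph{Lower bound.} Since $\fat[\alpha]\le\sfat[\alpha]$, the polynomial term and the dimension-saturated term $\min\{\alpha^{-1/\theta_{p,q}},d\}$, with $\theta_{p,q}=\frac1p-(\frac1q-\frac12)_+$ (and the analogous expression when $p\ge q$), follow directly from the known values of $\fat[\alpha](\cL[p,q^\ast][d])$ recorded in \cref{eq:fat_shattering_values} and \eqref{eq:fat-shattering-11}. What is left is the extra factor $1+\log_+\frac{1}{\alpha d^{\theta_{p,q}}}$ when ordinary shattering saturates at $d$. For this I would take the saturated witness: $d$ points $g_1,\dots,g_d$, essentially scaled coordinate vectors $\rho e_i\in B_{q^\ast}^d$, shattered at some scale $\alpha_0\gtrsim \alpha d^{\theta_{p,q}}$, meaning the achievable evaluation vectors $(\langle g_i,u\rangle)_i$ contain a cube of side $\asymp\alpha_0$. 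I would then build a predictable tree that visits each coordinate $k\asymp\log(\alpha_0/\alpha)$ times consecutively. The thresholds are chosen adaptively by binary search: each visit halves the remaining interval along that coordinate, and each visit still separates at scale $\alpha$. Every leaf of the tree corresponds to a point of the cube, hence to some $u\in B_p^d$, which gives depth $d\,k$. This adaptive construction is exactly what the statement attributes to binary search along coordinates.

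\emph{Upper bound, dimension-free term.} I would use the fact that an online learning algorithm with regret $o(\alpha T)$ on the class forces $\sfat[\alpha]\le T$, i.e.\ sfat is controlled by the minimax regret (Rakhlin--Sridharan--Tewari). The argument combines this with a covering step and \cref{lem:seq_union_bound}. Cover $B_p^d$ by $N$ translates $x_j+\rho K$, where $K$ is a uniformly convex ball, namely $B_{\max\{q',\dots\}}$ chosen so that $\H=B_{q^\ast}^d$ is in duality with it; concretely, $B_2^d$ when $q\ge2$ and $B_q^d$ when $q<2$. The $\log N$ is estimated by Maurey's empirical method, e.g.\ $\log N\lesssim \rho^{-2p^\ast/(\dots)}\log d$, and in the $(1,2)$ case $\log N\lesssim\rho^{-2}\log d$. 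On each piece the class is, up to a constant shift $\langle g,x_j\rangle$, a dual-type class of radius $\rho$. Mirror descent regret then gives $\sfat[\alpha]\lesssim(\rho/\alpha)^{\max\{q,2\}}$ there. The sequential union bound yields
\[
\sfat[\alpha](\cL[p,q^\ast][d])\lesssim (\rho/\alpha)^{\max\{q,2\}}+\log N(\rho)\cdot\log(\cdot),
\]
and optimizing $\rho$ should return $\alpha^{-1/\theta_{p,q}}$ up to a factor of $L_\alpha$. For instance, in the $(1,2)$ case, $\rho^2/\alpha^2+\rho^{-2}\log d$ is minimized at $\rho^2=\alpha\sqrt{\log d}$, giving $\alpha^{-1}$ up to logarithmic factors. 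For $p\ge q$ no covering is needed: one embeds $B_p^d\subseteq d^{1/q-1/p}B_q^d$ and applies the dual bound directly, which produces the second line.

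\emph{Upper bound, dimension term, and the main obstacle.} The term $d(1+\log_+\cdot)$ comes from a finite cover: an $\alpha/4$-net of $B_p^d$ in $\|\cdot\|_{\Xpolar}$-type sup-distance over $\H$ has log-cardinality $\lesssim d\log(1+\Lambda/\alpha)$, and a finite class of size $N$ has $\sfat[\alpha]\lesssim\log N$. To get the refined argument $\frac{1}{\alpha d^{\theta}}$ inside the logarithm, this must be combined with the previous step: cover only down to the scale at which the dimension-free bound saturates, i.e.\ $\rho$ with $(\rho/\alpha)^{\max\{q,2\}}\approx d$. The main obstacle is the Maurey covering estimate for $B_p^d$ by uniformly convex balls with the exact exponent for general $1<p<q$, together with tracking the regimes $q<2$ versus $q\ge2$. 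I would first settle $p=1$, where Maurey is clean, and then pass to general $p$ by interpolating between $B_1^d$ and $B_q^d$ (writing $B_p^d\subseteq t B_1^d+s B_q^d$ and optimizing over $t,s$).
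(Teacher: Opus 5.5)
Your lower bound and the skeleton of your upper bound (translates of a uniformly convex ball, a local bound per piece, \cref{lem:seq_union_bound}, a volumetric cover for the $d$-term) follow the paper, and your $p\ge q$ step is fine. The gap is in the dimension-free upper bound for $p<q$.

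\textbf{First: the local exponent is wrong.} Both of your choices, $K=B_2^d$ for $q\ge2$ and $K=B_q^d$ for $q<2$, are $2$-uniformly convex and contain $B_{q^\ast}^d$ in their dual ball. So a piece $x_j+\rho K$ has $\mathrm{sfat}_\alpha\lesssim(\rho/\alpha)^2$. The exponent $\max\{q,2\}$ belongs to the dual pair $(B_q^d,B_{q^\ast}^d)$ and is correct only in your $p\ge q$ step. With it, the optimization does not return $\alpha^{-1/\theta}$:
\begin{itemize}
\item for $(p,q)=(1,4)$, $\min_\rho\{(\rho/\alpha)^4+\rho^{-2}\log d\}$ is of order $\alpha^{-4/3}$ rather than $\alpha^{-1}$;
\item at $q=\infty$ the expression is meaningless;
\item the radius you calibrate for the $d$-term is off (for $p<2$ it should be $\rho=\alpha\sqrt d$).
\end{itemize}

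\textbf{Second: the regime $2<p<q$, with target $\alpha^{-p}$, is not handled.} Covering $B_p^d$ by Euclidean balls costs $\gtrsim d$ at radii below $c\,d^{1/2-1/p}$. Your interpolation $B_p^d\subseteq\lambda^{1-p}B_1^d+\lambda^{1-p/q}B_q^d$ has its own problems.
\begin{itemize}
\item It produces Minkowski-sum pieces $\rho B_2^d+\sigma B_q^d$, which neither \cref{lem:uniformly-convex-localization} nor the union bound handles as stated. Running two mirror-descent copies in parallel would fix this for finite $q$.
\item At $q=\infty$ the summand $\lambda B_\infty^d$ has no dimension-free uniformly convex potential.
\end{itemize}
The missing observation is the paper's. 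For $p\ge2$ the ball $B_p^d$ is itself $p$-uniformly convex (Clarkson's inequality), and $B_{q^\ast}^d\subseteq B_{p^\ast}^d$. So a single ball gives $\alpha^{-p}$ with no covering, and $R=\alpha d^{1/p}$ in the volumetric bound gives the $d$-term.

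\textbf{For $1\le p<2$ your route works} once you interpolate with $B_s^d$, $s=\min\{q,2\}$, instead of $B_q^d$. Truncation at level $\lambda$ gives $B_p^d\subseteq\lambda^{1-p}B_1^d+\lambda^{1-p/s}B_s^d$. Maurey covers the first summand by $\rho B_s^d$-balls at log-cost $(\lambda^{1-p}/\rho)^{s^\ast}\log d$. Taking $\rho=\lambda^{1-p/s}$ yields $\log N(B_p^d,RB_s^d)\lesssim R^{-1/(1/p-1/s)}\log d$. This is exactly the Sch\"utt estimate \eqref{eq:schutt-entropy-input} that the paper cites, and with exponent $2$ it gives $\alpha^{-1/\theta}$ up to $L_\alpha$.

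\textbf{Minor slips in the lower bound.}
\begin{itemize}
\item The cube scale is $\alpha_0\asymp d^{-\theta}$, not $\alpha_0\gtrsim\alpha d^{\theta}$.
\item Coordinate vectors are saturated witnesses only for $q\ge2$. For $q<2$ you need flat or Kashin-type vectors, as in \cref{lem:evaluation_cubes}.
\item The passage from shattering to the full evaluation cube that binary search needs is \cref{fact:fat-shattering-evaluation-cubes}.
\end{itemize}
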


\section{Optimization over Infinite-Dimensional Function Spaces}
\label{sec:function-space-examples}

In this section, we exploit the generality of our results and elaborate on three applications on infinite-dimensional spaces, in which the decision variable is a function on $\Omega\defi[0,1]^m$, by providing the optimization minmax rate via bounds on the corresponding fat-shattering dimension. 
The first application considers a Sobolev ball with $L_2$-bounded subgradients, the second a ball of Lipschitz functions with point-evaluation subgradients, and the third functions whose coefficients lie in an orthogonal-series representation belonging to an $\ell_p$ ball. The Sobolev example fits the reflexive Banach-space setting and is in fact norm compact in $L_2$, the Lipschitz example uses norm compactness in the nonreflexive space $C(\Omega)$, and the coefficient example has a norm-noncompact feasible ball whose weak compactness follows from reflexivity.
 To keep notation light, we reuse $\X$ and $\H$ for the feasible set and admissible subgradient set in each subsection.
We repeatedly use the last part of \cref{thm:fully-sequential-bound} and its implication of an optimization rate, after using \cref{lemma:m-online-to-batch}, and the lower bound of \citet{srebro2012convex} in terms of fat. That is, the minimax rates are
\(
    \OptErr[\X,\H](T)
    =
    \widetilde\Theta
    \left(
       \LambdaPair[\X,\H] T^{-1/r}
    \right),
\)
if $\sfat[\alpha](\cL[\X,\H]) \asymp_{p, q} \fat[\alpha](\cL[\X,\H])
=\widetilde\Theta((\LambdaPair[\X,\H]/\alpha)^r)$ for \(r\ge1\) and all $\alpha \leq \LambdaPair[\X,\H]$.
In each example below, it suffices to identify the sequential fat-shattering profile of the corresponding pairing class. We will use examples for which $\LambdaPair[\X,\H] \leq 1$.

A natural application of our results is constrained regression over infinite-dimensional function classes. The decision variable is a function $u$, while $\X$ encodes structural assumptions on the regression function (such as Sobolev smoothness, Lipschitz regularity, or sparse like representation) and $\H$ specifies the linear measurements through which the loss accesses $u$. Equivalently, $\H$ induces the observable metric $\sup_{g\in\H}|\langle g,u-v\rangle|$
which measures how distinguishable two candidate functions are under the available measurements. Thus the resulting optimization complexity depends jointly on the regularity class $\X$ and on the observation model $\H$.

\subsection{Optimization over Sobolev balls}

Fix $0<s\le m/2$ and let
\begin{equation*}
    \X
    \defi
    \left\{
        u\in H^s(\Omega):
        \|u\|_{H^s(\Omega)}
        \le1
    \right\}
    \subseteq
    L_2(\Omega),
    \qquad
    \H
    \defi
    B_{L_2(\Omega)}.
\end{equation*}
Here $H^s(\Omega)$ denotes the usual $L_2$-Sobolev space of smoothness $s$. For integer $s$, its norm controls the $L_2$ norms of the weak derivatives of $u$ up to order $s$.\footnote{For noninteger $s$, we can use the standard fractional Sobolev definition.} We normalize the norm so that $\|u\|_{L_2}\le\|u\|_{H^s}$. The Rellich-Kondrachov theorem implies that $\X$ is compact as a subset of $L_2(\Omega)$, while Cauchy-Schwarz gives $\LambdaPair[\X,\H]\le1$. Hence the result for a norm compact space applies.

\paragraph{Functional linear regression.}
A natural statistical application is functional linear regression with a smooth coefficient function, see \citep{Ramsay2005,Crambes2009}. Given functional covariates $Z_i\in L_2(\Omega)$ and responses $Y_i\in\R$, the goal is to find the best function $u \in \X$ that minimizes 
\begin{equation*}
    \frac1N\sum_{i=1}^N
    \ell\bigl(Y_i-\langle Z_i,u\rangle_{L_2}\bigr).
\end{equation*}
When $\|Z_i\|_{L_2}\le1$ and $\ell$ is convex and $1$-Lipschitz, every subgradient of this functional belongs to $B_{L_2(\Omega)}$, so the present result applies directly. This includes nonsmooth losses such as absolute-deviation and quantile regression. 
We now prove the shattering bounds that yields the tight minimax rate. 
\begin{proposition}
\label{prop:sobolev-function-space-example}\linktoproof{prop:sobolev-function-space-example}
For every $m\ge1$ and $0<s\le m/2$, for all sufficiently small $\alpha>0$,
\begin{equation*}
    \fat[\alpha]
    \left(
        \cL[\X,\H]
    \right)
    \asymp_{s,m}
    \sfat[\alpha]
    \left(
        \cL[\X,\H]
    \right)
    \asymp_{s,m}
    \alpha^{-\frac{2m}{m+2s}}.
\end{equation*}
Consequently,
\begin{equation*}
    \OptErr[\X,\H](T)
    =
    \widetilde\Theta_{s,m}
    \left(
        T^{-\left(\frac12+\frac{s}{m}\right)}
    \right).
\end{equation*}
\end{proposition}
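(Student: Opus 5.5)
The plan is to diagonalize the problem in a Fourier-type orthonormal basis of $L_2(\Omega)$. For example, take tensor-product cosines $(e_k)_{k\ge1}$, ordered by frequency. In this basis the Sobolev ball is equivalent, up to constants depending on $s,m$, to the ellipsoid
\[
\X\approx\Big\{u=\sum_k u_k e_k:\ \sum_k k^{2s/m}u_k^2\le1\Big\},
\]
because the $k$-th eigenvalue of the Laplacian grows like $k^{2/m}$ (Weyl's law). Since $\H=B_{L_2}$ is just the $\ell_2$ unit ball of coefficient sequences, the class $\cL[\X,\H]$ becomes a linear class between an $\ell_2$-ellipsoid and $B_2$. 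Replacing $\X$ by a comparable ellipsoid only rescales $\alpha$ by constants, so it does not affect $\asymp_{s,m}$ statements. Set $r\defi 2m/(m+2s)$; note that $r\ge1$ exactly because $s\le m/2$. The chain of results is $\fat\le\sfat$, a lower bound on $\fat$, and an upper bound on $\sfat$. Given these, the rate follows from the last part of \cref{thm:fully-sequential-bound}, which gives the upper bound $T^{-1/r}=T^{-(1/2+s/m)}$ through \cref{lemma:m-online-to-batch}, together with the $\fat$-based lower bound of \citet{srebro2012convex}.

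\emph{Lower bound on $\fat$.} Fix $N$ and take the shattered points $g_i=e_i\in\H$ for $i\le N$. For $\varepsilon\in\{\pm1\}^N$, let $u_\varepsilon=a\sum_{i\le N}\varepsilon_i e_i$ with thresholds $s_i=0$. Then
\[
\|u_\varepsilon\|_{H^s}^2\lesssim a^2\sum_{i\le N}i^{2s/m}\asymp a^2N^{1+2s/m},
\]
so the choice $a\asymp N^{-1/2-s/m}$ is feasible. The class then $\alpha$-shatters $N$ points at scale $\alpha\asymp a$. Inverting gives $N\asymp\alpha^{-2m/(m+2s)}$, hence $\fat[\alpha]\gtrsim\alpha^{-r}$.

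\emph{Upper bound on $\sfat$.} Split the coefficients into dyadic blocks $I_j=\{2^j\le k<2^{j+1}\}$. On block $j$, the $u$-coordinates lie in an $\ell_2$-ball of radius $\rho_j\asymp2^{-js/m}$ in dimension $2^j$, and the $g$-coordinates lie in $B_2^{2^j}$. \Cref{cor:explicit_regimes} with $p=q=2$, rescaled by $\rho_j$, bounds the block class at scale $\alpha_j$ by
\[
\lesssim \min\{(\rho_j/\alpha_j)^2,\ 2^j(1+\log_+(\rho_j/\alpha_j))\}.
\]
The full class is contained in the sum of the block classes. A sequential cover of a sum at scale $\sum_j\alpha_j$ is obtained from the product of the block covers, so log-covering numbers add across blocks. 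The crossover block $J$ is where $2^J=\rho_J^2/\alpha^2$, that is, $2^{J(1+2s/m)}=\alpha^{-2}$, which gives $2^J\asymp\alpha^{-r}$. I would allocate $\alpha_j=c\,\alpha\,2^{-|j-J|\delta}$ with small $\delta>0$ chosen so that $\sum_j\alpha_j\le\alpha/2$. The low blocks $j\le J$ then contribute $\sum_{j\le J}2^j\log(\cdot)\lesssim 2^J$. The high blocks $j>J$ contribute $\sum_{j>J}\rho_j^2/\alpha_j^2\asymp\alpha^{-2}\sum_{j>J}2^{-2js/m+2(j-J)\delta}$, which is geometric provided $\delta<s/m$ and is dominated by its first term $\asymp\alpha^{-2}2^{-2Js/m}=2^J$. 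The total log sequential covering number is therefore $\widetilde O(\alpha^{-r})$. Finally I would convert this back to $\sfat$: a sequentially $\alpha$-shattered tree of depth $n$ forces a sequential $\alpha/4$-cover of size at least $2^n$. This gives $\sfat[\alpha]\lesssim\log\Ninfseq(c\alpha)\lesssim\alpha^{-r}$, up to logarithmic factors. Alternatively, the covering bound can be fed directly into the chaining step behind \cref{thm:fully-sequential-bound}.

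\emph{Main obstacle.} The delicate step is this sequential upper bound, specifically combining infinitely many blocks without losing the exponent. A sequential union bound pays for the number of classes, and a naive split into a head of $N$ coordinates plus a tail bounded by $N^{-s/m}$ only gives $\alpha^{-m/s}$-type bounds. The geometric scale allocation around $J$ is what should avoid this. The remaining issue is the logarithmic factors coming from \cref{cor:explicit_regimes}: as planned, the argument only yields $\sfat[\alpha]=\widetilde\Theta(\alpha^{-r})$. To get a clean $\asymp$, I would try to remove the logs by working directly with Hilbert-space sequential Rademacher or covering bounds for each block (in $\ell_2$ the dimension-free estimate $(\rho_j/\alpha_j)^2$ carries no log), and apply the dimension-dependent bound $2^j\log$ only to blocks within $O(1)$ of $J$. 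If the logs cannot be removed, the $\widetilde\Theta$ bound still suffices for the optimization rate.
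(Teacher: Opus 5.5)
Your lower bound on $\fat[\alpha]$ is essentially the paper's construction. After the basis fix below, your block argument does give $\sfat[\alpha](\cL[\X,\H])=\widetilde\Theta(\alpha^{-r})$, which is enough for the optimization rate. But the proposition asserts $\fat[\alpha]\asymp_{s,m}\sfat[\alpha]\asymp_{s,m}\alpha^{-2m/(m+2s)}$ with no logarithmic factors. Your plan does not reach this, and the remedies you sketch do not close the gap.

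Adding the blocks through sequential covering numbers pays a logarithm. The standard passage from a block's $\sfat$ bound to $\log\Ninfseq$ costs a factor of order $\log(n/\alpha_j)$. Bounding the crossover block volumetrically instead gives $2^J\log(\rho_J/\alpha_J)\asymp2^J\log(1/\alpha)$, since $\rho_J/\alpha_J\asymp2^{J/2}$. A sequential Rademacher route is worse: the constant tree $z_t\equiv\phi_1$ already makes the sequential Rademacher complexity of order $n^{-1/2}$, so the implication $\sfat[\alpha]\ge n\Rightarrow\alpha\le2\mathfrak R_n$ can only give $\alpha^{-2}$.

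The missing idea is to use a union instead of a sum and never leave $\sfat$:
\begin{itemize}
\item Cover $\X$ by $N$ balls $a_i+RB_{L_2}$. The sharp entropy numbers of $H^s\hookrightarrow L_2$ give $\log N\lesssim_{s,m}R^{-m/s}$, with no logarithm.
\item On each piece $\X\cap(a_i+RB_{L_2})$, the potential $\|\cdot-a_i\|_{L_2}^2$ is $(1/4,2)$-uniformly convex with range at most $R^2$. So \cref{lem:uniformly-convex-localization} gives $\sfat[\alpha]\le4R^2/\alpha^2$.
\item \cref{lem:seq_union_bound} combines the $N$ pieces at the additive price $O(\log N)$.
\end{itemize}
Taking $R=\alpha^{2s/(m+2s)}$ makes both terms equal to $\alpha^{-2m/(m+2s)}$. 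You dismissed the union bound because of a head/tail coordinate split. With a metric cover of $\X$, the $\log N$ price is exactly the entropy term that balances localization.

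Separately, your diagonalization is wrong for part of the parameter range. Tensor-product cosines are the Neumann eigenbasis. For $s>3/2$, which $s\le m/2$ allows once $m\ge4$, the space $\{\sum_k(1+|k|^2)^sc_k^2<\infty\}$ is a proper subspace of $H^s([0,1]^m)$. For example, $u(x)=x_1\in H^2([0,1]^4)$ has cosine coefficients of order $k^{-2}$ (odd $k$) along the first axis, so its cosine-$H^2$ norm diverges. Weyl's law gives eigenvalue asymptotics, not norm equivalence.

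The inclusion your lower bound uses (finite cosine sums lie in a multiple of $\X$) is fine. The reverse inclusion your upper bound needs fails. The paper instead takes the singular value decomposition of the compact embedding $H^s\hookrightarrow L_2$. This writes $\X$ exactly as $\{\sum_j\theta_j\phi_j:\sum_jw_j^2\theta_j^2\le1\}$, with $(\phi_j)$ orthonormal in $L_2$ and $w_j\asymp_{s,m}j^{s/m}$ by the Edmunds--Triebel approximation-number estimates. In these coordinates $\H$ is exactly the unit $\ell_2$ ball. That is all the lower bound needs, and the upper bound above needs only the entropy estimate, with no diagonalization at all.
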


Thus every positive amount of Sobolev smoothness in this regime yields a polynomial improvement over the natural $T^{-1/2}$ rate obtained by the classical subgradient method. %
At the critical smoothness $s=m/2$, the rate reaches the $1/T$ scale. As can be seen by the exponent $s/m$, what matters for this class, is the amount of smoothness relative to the ambient dimension.

\subsection{Optimization over Lipschitz balls}

Work in $C(\Omega)$ endowed with the supremum norm, $\|u\|_{\infty}=\sup_{x\in \Omega}|u(x)|$, and let
\begin{equation*}
    \X
    \defi
    \left\{
        u\in C(\Omega):
        u(0)=0,\quad
        |u(x)-u(y)|
        \le
        \|x-y\|_\infty
        \ \text{for all }x,y\in\Omega
    \right\}.
\end{equation*}
For each $x\in\Omega$, let $\delta_x(u)\defi u(x)$ and let the point evaluations be $\H\defi\{0\}\cup\{\pm\delta_x:x\in\Omega\}$.
Thus $\cL[\X,\H]$ is simply the class of $0$-normalized $1$-Lipschitz functions on $\Omega$. Moreover, $\X$ is compact in the supremum norm by the Arzelà-Ascoli theorem and $\LambdaPair[\X,\H]\le1$.

\paragraph{Individual fairness.} We give an example application from fairness in machine learning. 
We consider a point $x\in[0,1]^m$ as the feature vector of an individual, and let $u(x)$ denote the score or decision assigned to that individual, such as the salary, credit score, or some priority score. A Lipschitz constraint then requires individuals with similar features to receive similar scores, this is a standard formulation that is called individual fairness \citep{Dwork2012}. Given a baseline score $v$, solving $\min_{u\in\X}\|u-v\|_\infty$ corresponds to finding an individually fair score with minimum worst-case deviation.%

\begin{proposition} \label{prop:lipschitz_bound}\linktoproof{prop:lipschitz_bound}
For all sufficiently small $\alpha>0$,
\begin{equation*}
    \fat[\alpha](\cL[\X,\H])
    \asymp_m
    \sfat[\alpha](\cL[\X,\H])
    \asymp_m
    \alpha^{-m}.
\end{equation*}
Consequently,
\begin{equation*}
    \OptErr[\X,\H](T)
    =
    \widetilde\Theta_m(T^{-1/m}).
\end{equation*}
\end{proposition}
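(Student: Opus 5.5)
We must show $\fat[\alpha]\asymp\sfat[\alpha]\asymp\alpha^{-m}$ for the class of $0$-normalized $1$-Lipschitz functions on $\Omega=[0,1]^m$ (sup-norm metric), evaluated at points of $\Omega$ (together with sign flips and $0$). The rate then follows by taking $r=m$ in the last part of \cref{thm:fully-sequential-bound}, applying \cref{lemma:m-online-to-batch}, and using the fat-shattering lower bound of \citet{srebro2012convex}. Since $\fat\le\sfat$, it suffices to prove a lower bound on $\fat$ and an upper bound on $\sfat$.

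\emph{Lower bound on $\fat$.} Take a grid $z_1,\dots,z_N$ in $\Omega$ with spacing $\alpha$ in $\|\cdot\|_\infty$, avoiding the origin by at least $\alpha$, so that $N\gtrsim_m\alpha^{-m}$. Use the evaluation functionals $\delta_{z_i}\in\H$ and thresholds $s_i=\alpha$. For a sign pattern $\varepsilon$, set
\[
h_\varepsilon(x)=\alpha+\sum_i \varepsilon_i\,\phi_i(x),\qquad \phi_i(x)=\bigl(\tfrac{\alpha}{2}-\|x-z_i\|_\infty\bigr)_+ .
\]
The bumps have disjoint supports and each is $1$-Lipschitz, so $h_\varepsilon$ is $1$-Lipschitz, and $h_\varepsilon(z_i)-s_i=\varepsilon_i\alpha/2$. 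The only defect is $h_\varepsilon(0)=\alpha\ne0$. To repair it, replace $h_\varepsilon$ by $u=h_\varepsilon-h_\varepsilon(0)=\sum_i\varepsilon_i\phi_i$ and use thresholds $s_i=0$; since no bump touches $0$, $u(0)=0$ and $u\in\X$. The margin is exactly $\alpha/2$, as required. Hence $\fat[\alpha]\gtrsim_m\alpha^{-m}$.

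\emph{Upper bound on $\sfat$.} This is the main obstacle, because the adversary chooses the points $z_t(\varepsilon_{<t})$ adaptively, and the minus signs in $\H$ let it use $-\delta_x$. Since $-\delta_x$ applied to $u$ equals $-u(x)$, an evaluation at $-\delta_x$ can be rewritten as an evaluation at $+\delta_x$ with threshold $-s$ and flipped sign. So we may assume the tree consists of point evaluations, and drop any node at $0$, which cannot be shattered. Partition $\Omega$ into $K\asymp_m\alpha^{-m}$ cells of $\|\cdot\|_\infty$-diameter at most $\alpha/4$.

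The key claim is that along any shattered path, each cell can be visited at most $O(\log(1/\alpha))$ times. Fix a cell and all functions consistent with the path so far. Their values on the cell are $1$-Lipschitz, so each function varies by at most $\alpha/4$ across the cell. Track the interval $I$ of possible values at a reference point $c$ of the cell; initially $|I|\le 2$ since $\Lambda\le1$.

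Consider a node at a point $x$ in the cell with threshold $s$, and any function $u$ consistent with that branch. On the $+$ branch, $u(x)\ge s+\alpha/2$, which forces $u(c)\ge s+\alpha/4$. On the $-$ branch, $u(x)\le s-\alpha/2$, which forces $u(c)\le s-\alpha/4$. So each visit splits $I$ into two pieces separated by a gap of width $\alpha/2$, and the path continues in one of them. This alone does not give a halving: a badly placed threshold shrinks $I$ by only $\alpha/2$ on one branch, which would bound the visits by $O(1/\alpha)$ rather than $O(\log(1/\alpha))$. That is too weak, and the argument needs a sharper accounting.

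Rather than bounding visits directly, I would bound the sequential covering number. Cells make the class $\alpha$-close to piecewise-constant functions, whose values lie on an $\alpha$-grid, so each cell takes $O(1/\alpha)$ possible values. Even a finite class of $N$ functions yields only $\sfat\le\log_2 N$, and here $N=(1/\alpha)^{K}$, giving $\sfat\lesssim K\log(1/\alpha)$. To get $\sfat\lesssim_m\alpha^{-m}$ with no log, use instead the standard Kolmogorov--Tikhomirov entropy bound: cover the class by encoding value increments between adjacent cells, with three choices per step. This gives $\log N\lesssim_m\alpha^{-m}$, and since $\sfat[2\alpha]\le\log_2 N(\alpha)$ for a uniform (non-tree) cover, $\sfat\lesssim\alpha^{-m}$.

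If that argument falls short, the logarithmic loss is in any case absorbed into the $\widetilde\Theta$ of the rate. For the $\asymp$ claim I would rely on this chaining-free uniform-cover argument.
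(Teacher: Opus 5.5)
Your final argument is essentially the paper's. For the lower bound you shatter a grid of $\asymp_m\alpha^{-m}$ points with zero thresholds, using explicit disjoint bumps where the paper uses McShane extension. For the upper bound you combine the $\alpha^{-m}$ uniform entropy of $0$-normalized Lipschitz functions with the fact that the $2^n$ witnesses of a depth-$n$ shattered tree are pairwise separated in sup norm, so you can delete the retracted cell-counting attempt and the closing hedge. One small correction: $\mathrm{sfat}_{2\alpha}\le\log_2 N(\alpha)$ can fail with closed $\alpha$-covers, since a center equal to the threshold at the splitting node covers two witnesses at distance exactly $2\alpha$; use a cover of radius strictly below $\alpha$, e.g.\ $N(\alpha/2)$, as the paper does with an $\alpha/4$-cover for $\alpha$-shattering.
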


In particular, when $m=1$, the minimax rate is $1/T$. When $m=2$, it is the $T^{-1/2}$ scale, while for $m>2$ this grows slower than the usual $T^{-1/2}$ rate.

\paragraph{Relation to the Sobolev example.} The two preceding examples encode different forms of regularity. The Sobolev constraint of $H^s$ controls smoothness through square-integrable derivatives and is paired above with the full $L_2$ unit ball of subgradients. The Lipschitz constraint instead controls pointwise variation and is paired with point evaluations. Sobolev embedding relates these classes for sufficiently large $s$, but using that embedding to control the Sobolev example would be lossy.

\subsection{Optimization over an $\ell_p$ Coefficients Ball}

Let $(\phi_j)_{j\ge1}$ be a real orthonormal basis of $L_2(\Omega)$ and fix $1<p<2$. Let
\begin{equation*}
    \X
    \defi
    \bigg\{
        u=\sum_{j\ge1}\theta_j\phi_j:
        \sum_{j\ge1}|\theta_j|^p\le1
    \bigg\},
\end{equation*}
where the ambient space is equipped with the coefficient norm $\|u\|
    \defi
    \big(
        \sum_{j\ge1}|\theta_j|^p
    \big)^{1/p}$.  For each $j\ge1$, define the coordinate functional $\langle g_j,u\rangle
    \defi
    \langle u,\phi_j\rangle_{L_2(\Omega)}
    =
    \theta_j$, 
and let $\H\defi\{g_j:j\ge1\}$.

Since $\ell_p\subseteq\ell_2$, these series converge in $L_2(\Omega)$. Under the coefficient map, the ambient space is isometric to $\ell_p$, and is therefore reflexive. Notice also that $\X$ is not  compact for the norm-topology %
as the functions $\phi_j$ belong to $\X$ and satisfy $\|\phi_j-\phi_k\|=2^{1/p}$ for $j\neq k$ which means that this sequence does not have a convergent sub-sequence.

\paragraph{Wavelet reconstruction.}
For $\Omega=[0,1]^2$, we may take $(\phi_j)$ to be an orthonormal wavelet basis. Sparse or $\ell_p$-structured wavelet representations are classical models for low-complexity signal and image representations, with applications to compression and denoising, see for instance \citet{Mallat2009}. Suppose that $v\in L_2(\Omega)$ is an observed image and that we seek a low-complexity approximation $u\in\X$. For a finite collection of wavelet coefficients, one natural criterion is to minimize $\max_{1\le j\le M}
    \left|
        \langle u,\phi_j\rangle_{L_2}
        -
        \langle v,\phi_j\rangle_{L_2}
    \right|$, the maximum discrepancy between the wavelet coefficients of the reconstruction and those of the observed image, subject to the $\ell_p$ constraint on the coefficients of $u$.

\begin{proposition}\label{prop:basis_bound}\linktoproof{prop:basis_bound}
For all sufficiently small $\alpha>0$,
\begin{equation*}
    \fat[\alpha](\cL[\X,\H])
    \asymp_p
    \sfat[\alpha](\cL[\X,\H])
    \asymp_p
    \alpha^{-p}.
\end{equation*}
Consequently, after symmetrizing the coefficient functionals,
\begin{equation*}
    \OptErr[\X,\H](T)
    =
    \widetilde\Theta_p(T^{-1/p}).
\end{equation*}
\end{proposition}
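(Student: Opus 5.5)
The plan is to prove the two-sided profile $\fat[\alpha](\cL[\X,\H])\asymp_p\sfat[\alpha](\cL[\X,\H])\asymp_p\alpha^{-p}$ by a direct construction for the lower bound and an adversarial-path argument for the upper bound. The rate then follows from the last part of \cref{thm:fully-sequential-bound}, together with \cref{lemma:m-online-to-batch} and the fat-based oracle lower bound of \citet{srebro2012convex}. Under the coefficient map, $\cL[\X,\H]$ is the class $g_j\mapsto\theta_j$ with $\theta\in B_p$ of $\ell_p$. We first replace $\H$ by $\H\cup(-\H)\cup\{0\}$ to meet \cref{ass:minimax_theorem_consequence}. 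This changes neither dimension: $0$ gives the constant value $0$ and so can never be shattered, and $-g_j$ just flips the sign of a coordinate together with its threshold. The assumption itself holds because $\ell_p$ is reflexive and separable, so $\X$ is weakly compact and $\eta$-minimizers exist. Also $\LambdaPair[\X,\H]\le1$, since $|\theta_j|\le\|\theta\|_p$.

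\textbf{Lower bound on $\fat$.} Take $z_i=g_i$ for $i\le m$, all thresholds $s_i=0$, and $h_\varepsilon$ given by $\theta_i=\varepsilon_i\alpha/2$ for $i\le m$ and $0$ otherwise. Then $\|\theta\|_p^p=m(\alpha/2)^p\le1$ whenever $m\le(2/\alpha)^p$. Hence $\fat[\alpha]\ge\lfloor(2/\alpha)^p\rfloor$, and $\sfat[\alpha]\ge\fat[\alpha]$ is automatic.

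\textbf{Upper bound on $\sfat$.} This is the main step. Suppose a depth-$T$ tree with thresholds $\mathbf s$ is sequentially $\alpha$-shattered; after sign normalization every node queries some coordinate $j_t$. We build a path greedily, choosing $\varepsilon_t=+1$ if $s_t\ge0$ and $\varepsilon_t=-1$ otherwise, so that each choice pushes $\theta_{j_t}$ away from $0$.

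The claim is that if coordinate $j$ is queried $k$ times along this path, then every $h$ realizing the path has $|\theta_j|\ge(k-\tfrac12)\alpha\ge k\alpha/2$. We prove this by induction on the queries to $j$. Suppose an earlier query on $j$ with threshold $s$ went the $+$ direction, forcing $\theta_j\ge s+\alpha/2$ with $s\ge 0$, and consider the next query on $j$, at threshold $s'$ along the same prefix. Both children of that node must be realizable by functions agreeing with the prefix constraints. The $-1$ child requires some $\theta$ with $s+\alpha/2\le\theta_j\le s'-\alpha/2$, hence $s'\ge s+\alpha\ge0$. Our rule therefore again picks $+1$, and the threshold has grown by at least $\alpha$. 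The negative side is symmetric. Consequently the $k$-th query on $j$ has $|s|\ge(k-1)\alpha$, which gives the claim.

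Let $k_j$ be the number of queries to coordinate $j$ along the path, so $\sum_j k_j=T$. The function realizing the path must satisfy
\[
1\ge\sum_j|\theta_j|^p\ge\sum_{j:k_j\ge1}(k_j\alpha/2)^p\ge(\alpha/2)^p\sum_j k_j=T(\alpha/2)^p,
\]
where the last inequality uses $k^p\ge k$ for integers $k\ge1$ and $p>1$. Hence $\sfat[\alpha]\le(2/\alpha)^p$. Both bounds hold for all $\alpha\le\LambdaPair[\X,\H]$, not only small $\alpha$, up to rounding.

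\textbf{Rate.} With $r=p\ge1$, \eqref{eq:general_sfat_rate} gives $\Vmax[\X,\H,T]/T=\widetilde O(T^{-1/p})$. \Cref{lemma:m-online-to-batch}, applied to the affine minorants returned by the oracle, turns this into $\OptErr[\X,\H](T)=\widetilde O(T^{-1/p})$. The matching $\Omega(T^{-1/p})$ comes from the lower bound of \citet[Theorem~1]{srebro2012convex} in terms of $\fat[\alpha]\gtrsim\alpha^{-p}$, evaluated at $\alpha\asymp T^{-1/p}$.

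The delicate point is the induction in the sfat upper bound: we must use realizability of the \emph{sibling} branch, not only of the chosen path, to force the thresholds on a repeated coordinate to move monotonically outward by $\alpha$ each time. This is exactly what rules out the binary-search phenomenon that gives extra logarithmic growth in the $\ell_\infty$-type cases.
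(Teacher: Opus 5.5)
Your proposal is correct and follows essentially the same route as the paper. Both arguments use the same three ingredients:
\begin{itemize}
\item the zero-threshold construction $u=\frac{\alpha}{2}\sum_{j\le n}\epsilon_j\phi_j$ with $n\le(2/\alpha)^p$ for the ordinary lower bound;
\item a greedy away-from-zero path whose repeated thresholds on a coordinate are pushed outward by $\alpha$ through realizability of the sibling branch, giving $|\theta_j|\ge(k_j-\tfrac12)\alpha$ and hence depth at most $(2/\alpha)^p$;
\item symmetrization of the coefficient functionals to $\{0\}\cup\{\pm g_j\}$, followed by the general sfat rate and the fat-based oracle lower bound.
\end{itemize}
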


Unlike the Sobolev ball, which after diagonalization is a weighted $\ell_2$ ball with coefficients penalized by weights $w_j\asymp j^{s/m}$, this $\X$ is an unweighted $\ell_p$ ball, so it controls sparsity or compressibility of the coefficient sequence rather than imposing increasing decay with frequency.

\section{Conclusion}

We have solved the nonsmooth regime of the COLT open question \citep{pmlr-v40-Guzman15} by showing that the optimal minmax rate coincides, up to polylog factors, with the rate of the previously known lower bounds. 
The new substantial improvement occurs when $p<\min\{q,2\}$, while we point that $2 < p < q$ can be in fact solved by mirror descent.
We expect that the techniques and results developed in this work will reach other applications. 
To the best of our knowledge, no previous general quantitative results for average convex-hull approximation to the mean were devised previously. An important open question for future work is that of finding efficient algorithms that realize the bounds found in this work, as well as extending the speed advantage to Hölder-smooth settings.

We also gave a partial answer to the open question in \citet[Section~10.1.2, Q3]{sridharan2012learning} about whether the complexity of convex Lipschitz optimization could be upper bounded in terms of fat-shattering dimension. We upper bounded it in terms of sequential fat-shattering dimension instead. These two quantities were only logarithmic factors away in our $(p, q)$-norm case, as well as in the Lipschitz, Sobolev, and $L_2$ orthonormal basis examples.

\acks{
Crist\'obal Guzm\'an was partially funded by ANID FONDECYT 1251029 grant, and ANID Basal FB210017 National Center
for Artificial Intelligence CENIA.
David Martínez-Rubio was funded by grant La Caixa Junior Leader Fellowship 2025. He thanks OpenAI for free access to their models. 
Mathieu Molina received funding from the European Research Council (ERC) under the European Union's Horizon Europe program (grant agreement No. 101170373), as a postdoctoral fellow at Tel Aviv University.

This work was elaborated in combination with ChatGPT/Codex, that helped in a few places after a highly interactive workflow. Many of the main ideas (definition of the max-regret as a tool for solving the problem, online reduction removing player's actions, use of the capped simplex for the stochastic processes bound, iid simplified case, structure of lower bounds, among others) were from the authors, while AI was used to accelerate computations, checks and elaboration of some proofs. The computation of sequential fat-shattering bounds was mostly done by AI, and then checked and rewritten. The majority of this AI use, except for minor later refinements, was done with GPT 5.5 and earlier models (the newest models when we worked on the main results), except for the infinite-dimensional results. For the bounds of the latter results, many of the ingredients
were classical but dispersed in the literature. We used GPT 5.6 and 6 to identify and synthesize these ingredients and derive short self-contained proofs tailored to our setting. 
The authors checked and rewrote the proofs that were automated, simplifying the presentation and making the main ideas more transparent. In particular, they are solely responsible for the contents of this manuscript. 
}

\clearpage

\begingroup
\sloppy

\printbibliography
\endgroup

\clearpage
\appendix
\crefaliasappendix

\section{Further Related Work}\label{sec:further_related_work}

\textbf{Prior open questions.}
\citet{pmlr-v40-Guzman15} asked for the large-scale minimax risk of black-box
convex minimization over \(\ell_p\)-balls when regularity is measured in a
different \(\ell_q\)-norm, and whether the rate of the standard
\(\ell_q/\ell_q\) method can be improved by exploiting the
geometry of the smaller feasible set.  Our results answer this question at the
\(\kappa=1\) endpoint.  The
case \(\kappa\to1\) is identified there with the nonsmooth Lipschitz limit. 

Our first reduction in \cref{thm:generalized_triplex_like_theorem} uses a direct conditional-mean argument, motivated by the triplex framework of \citet{rakhlin2011online}, who give a general framework for non-standard
notions of regret in which the learner's losses and the benchmark may be
aggregated in ways other than addition. In our setting, the benchmark is a
maximum over losses. Ordinary online convex optimization, with regret against
the sum of losses, is already pinned to the \(\sqrt T\) cumulative scale in
worst case. This is visible through the online convex optimization reduction
of \citet[Lemma~13]{rakhlin2015online}, the sequential-Rademacher upper bound
of \citet[Proposition~16]{rakhlin2015martingale}, and the matching lower-bound
phenomena in
\citet[Lemma~3 and Theorem~5]{cheng2025geometry}.  The sequential entropy step
uses the sequential covering and fat-shattering machinery of
\citet{rakhlin2015online}. In particular, their Proposition~9 gives a lower
bound in terms of sequential fat-shattering for supervised absolute loss.  Our
lower bound in terms of fat-shattering is for the value of the $\max$-regret game and although its analysis differs from any prior work, the final result should be viewed as an analog of \citep[Proposition 9]{rakhlin2015online} for our setting, in terms of its shape.

The relationship between convex optimization and fat-shattering goes back to
the lower-bound construction
of \citet[Section~4.4.2]{nemirovski1983problem}.  \citet{srebro2012convex}
observed that this construction yields an oracle-complexity lower bound
controlled by the fat-shattering dimension of the associated linear class.
The open question in \citet[Section~10.1.2]{sridharan2012learning}
asks, in this spirit, whether the offline oracle complexity
can always be upper bounded by the fat-shattering dimension of the
associated linear class, and whether such a bound can be achieved by an
optimization algorithm. That is, in our notation, whether there is $c$ such that if $T$ is the minimum iteration count such that $\OptErr[\X,\H](T) \leq \epsilon$ then
\( T \leq \fat[c\varepsilon](\cL[\X,\H])\). Taken literally, such a bound cannot work in low
dimensions, as oracle lower bounds of order \(d\log(1/\varepsilon)\) are
known \citep[Chapter~4]{nemirovski1983problem} while fat-shattering contributes
only the \(d\)-scale.   One may instead ask for the right statement up to
logarithmic factors. Our contribution to the question is to show exactly that bound when using the sequential fat-shattering dimension instead, in cases where it decays according to the polynomial rate in \cref{thm:fully-sequential-bound}.

\paragraph{Upper bounds and sequential fat-shattering}
As explained before the Triplex inequality of \citep{rakhlin2011online} was a source of inspiration for our upper bounds, after realizing the max-regret would be a useful object of study for this problem. In the nondual settings, upper bounds depended on sequential fat-shattering bounds for classes of linear functions in mixed geometries, which were not know before to the best of our knowledge.
The geometric part about characterizing sequential fat-shattering up to log factors in the $(p, q)$-norm case is close in spirit to classical entropy estimates for
\(\ell_p^d\to\ell_s^d\) embeddings, including Sch\"utt's bounds and the
sparse-vector proofs surveyed in
\citet[Theorem~2(a), Section~3]{kossaczka2020entropy}.  Fat-shattering was
introduced by \citet{kearns1994efficient}, whereas \citet{rakhlin2010online} introduced the sequential fat-shattering dimension. The martingale complexity framework of \citet[Proposition~16]{rakhlin2015martingale} gives
$\sfat[\alpha](\cL[s,s^\ast])$ for \(s\ge2\). To the best of our knowledge, the general \(p,q\) case was not previously known. In general, $\sfat$ and $\fat$ can be very far apart: in the binary
case this is the familiar gap between Littlestone and VC dimensions \citep{Littlestone:1987}, and
\citet{wu2022sequential} exhibit corresponding separations between $\fat$ and $\sfat$ for specific hypothesis classes; namely, for sequential prediction of linear threshold functions under log-loss. For the linear classes studied in this work, by contrast, our bounds show that $\fat$ and $\sfat$ are the same up to logarithmic factors. 

\paragraph{Lower bounds and fat shattering}
The systematic study of lower bounds in first-order oracle models begins with
Nemirovski and Yudin \citep{nemirovski1983problem}, who formalized
local oracles and developed resisting-oracle constructions for the dual case. A later observation of
\citet[Theorem~1]{srebro2012convex} was that the same
construction gives a lower bound of $\fat[2\epsilon](\cL[\X,\H])$ for the oracle complexity over $\X$ for convex functions with subgradients in $\H$.
The values of fat-shattering dimension were studied up to constants in \citet{mendelson2004shattering}, which gave tight upper and lower bounds in several regimes and partial estimates in others. %
\citet{guzman2015information} provided lower bounds for convex Lipschitz optimization for all $p, q$ and although it was not noted in that work, the construction automatically serves for showing tight lower bounds for fat-shattering for all $p, q$, up to constants when \((p,q)\ne(1,1)\), and logarithmic factors at \((1,1)\), improving over \citet{mendelson2004shattering} in their missing case. In fact, the tightness is implied by matching oracle complexity upper bounds from mirror descent for this case $q < 2 < p$, which provides a proof that the lower bound on fat-shattering is tight. Thus, even though the pieces were not previously put together, for \((p,q)\notin\{(1,1),(\infty,\infty)\}\) and on the nontrivial scales it is known that
\begin{equation}\label{eq:fat_shattering_values}
\fat[\alpha](\cL[p,q^\ast][d])
\asymp_{p,q}
\begin{cases}
\min\left\{
    \alpha^{-\frac{1}{\frac{1}{p}-(\frac{1}{q}-\frac{1}{2})_+}},d
\right\},
& p<q,\\[1mm]
\min\!\left\{
    \left(d^{1/q-1/p}/\alpha\right)^{\max\{q,2\}},d
\right\},
& p\ge q,\ (p,q)\notin\{(1,1),(\infty,\infty)\}.
\end{cases}
\end{equation}
At \((p,q)=(1,1)\), the results of
\citet[Corollary~2.3 and Theorem~3.1]{mendelson2005geometry}
give the sharp characterization
\begin{equation}\label{eq:fat-shattering-11}
    \fat[\alpha](\cL[1,\infty][d])
    \asymp
    \min\left\{d,\frac{\log(2+d\alpha^2)}{\alpha^2}\right\},
    \qquad 0<\alpha\le1.
\end{equation}
And finally, it is $\fat[\alpha](\cL[\infty,1][d]) = d$ if $0<\alpha\le2$ and $0$ otherwise.
In all cases, the minimum with \(d\) is the finite-dimensional cutoff, cf.
\citet[Section 11]{anthony1999neural}. Note that a simple consequence of the definition of $\fat[\alpha]$ for any class, is that it increases when alpha decreases and it tends to the pseudodimension  of the class \citep[Section~11.3]{anthony1999neural} when $\alpha \to 0$, which is a real-valued function generalization of the VC-dimension.

\paragraph{Bundle methods}

The following is a non exhaustive discussion on the bundle-method literature. To the best of our knowledge, the first bundle method is from \citet{kelley1960cutting-plane}.  \citet{drori2016optimal-kelley} concerns a modified version of it, rather than the unstabilized method original method. Bundle-level methods attain the same optimal nonsmooth rate
\citep{lan2015bundle-level} and can adapt to smoothness and be universal. The proximal-bundle analysis of
\citet{diaz2023proximal-bundle} recovers this rate and adapts to additional
smoothness or growth assumptions. \citet{ben-tal2005non-euclidean} designed bundle methods for non-Euclidean geometries, matching the rates of mirror descent methods.

\paragraph{Approximation of a convex hull of samples to its mean} %
The classical Wendel Theorem \citep{wendel1962problem} provides an exact formula for the probability that the origin lies in the convex-hull of $n$ vectors drawn i.i.d.~from a centrally symmetric distribution in general position and in particular there is a sharp transition at $\Theta(d)$ samples. We study a more quantitative margin version guarantee, in the different setting of considering general distributions.
A few, but limited, quantitative variants of the Wendel's theorem have been studied in a
specific cases, such as  \citet{liu2014probabilities},
who treat the uniform distribution and Euclidean distance. \citet{hayakawa2023estimating} obtain distribution-dependent quantitative extensions of Wendel's theorem, controlling the probability that a point lies within distance \(\varepsilon\) of the convex hull of iid samples. Their results concern Euclidean geometries and primarily the post-dimensional regime.

Notable applications of the Wendel Theorem include learning theory, as we discuss below, and compressed sensing: \citet{Donoho:2009} use it to show sharp transitions on the recovery of sparse vectors under random linear measurements. There is a broad literature on Wendel-type containment and absorption probabilities
\citep{wagner2001continuous,kabluchko2020absorption,hayakawa2023estimating,Tikhomirov:2023}.
To the best of our knowledge, these works mostly concern the regime in which
the number of samples is at least the ambient dimension, whereas our bounds
are most informative in the early-time regime \(T<d\) and provide a more robust margin guarantee depending on a norm of choice.

The ideas behind Wendel's Theorem have been systematically applied to the study of linear (and nonlinear) data separability, starting from the work of \citet{Cover:1965}. In this regard, quantitative versions of Wendel's theorem as studied in our work are directly related to margin conditions, which are relevant for machine learning and statistical physics. For example, the Gardner problem in statistical physics asks precisely for margin conditions on randomly drawn (Gaussian) vectors \citep{Gardner:1988,Shcherbina:2002,Shcherbina:2003,Stojnic:2013}. There are also well known connections between margin conditions in different norms and guarantees for offline and online learning \citep{Littlestone:1987,Salehi:2020}.

\section{Proofs for the Online-Learning Reduction}\label{app:online-reduction}

\begin{proof}\linkofproof{thm:generalized_triplex_like_theorem}
For each realized past, write \(\mathbb E_t'\) for the conditional expectation
over independent draws \(x_t'\sim s_t\) and \(\ell_t'\sim r_t\):
\[
    \mathbb E_t' F(x_t',\ell_t')
    \defi
    \mathbb E_{\substack{x_t'\sim s_t\\\ell_t'\sim r_t}}
    \left[F(x_t',\ell_t')\mid x_{1:t-1},\ell_{1:t-1}\right].
\]
These draws are used only inside the conditional expectation; future
distributions depend on the realized, unprimed history.

Using \cref{ass:minimax_theorem_consequence} gives \(\circled{1}\).
Adding and subtracting the conditional expectations and splitting
\(\sup_u\) gives \(\circled{2}\):
{\small
\begin{align*}
    \Vmax[\X,\H,T]
    &\circled{1}[=]
    \left(\sup_{r_t}\inf_{s_t}
    \mathbb E_{\substack{x_t\sim s_t\\\ell_t\sim r_t}}\right)_{t=1}^T
    \sup_{u\in\X}
    \left\{\sum_{t=1}^T\ell_t(x_t)-T\max_{t\in[T]}\ell_t(u)\right\}\\
    &\circled{2}[\le]
    \left(\sup_{r_t}\inf_{s_t}
    \mathbb E_{\substack{x_t\sim s_t\\\ell_t\sim r_t}}\right)_{t=1}^T
    \Bigg[
        \sum_{t=1}^T\left(\ell_t(x_t)-\mathbb E_t'\ell_t'(x_t')\right)
    +
        \sup_{u\in\X}\left\{
            \sum_{t=1}^T\mathbb E_t'
            \left(\ell_t'(x_t')-\ell_t'(u)\right)
        \right\}\\
    &\qquad+
        \sup_{u\in\X}\left\{
            \sum_{t=1}^T\mathbb E_t'\ell_t'(u)
            -T\max_{t\in[T]}\ell_t(u)
        \right\}
    \Bigg].
\end{align*}
}

We break this expression into three terms in the spirit of the Triplex
Inequality \citep[Theorem~1]{rakhlin2011online}, allowing different loss
aggregations for the player and comparator. Using linearity of expectation
and, at each infimum,
\[
    \inf_a[C_1(a)+C_2(a)+C_3(a)]
    \le
    \sup_a C_1(a)+\inf_a C_2(a)+\sup_a C_3(a),
\]
we obtain
{\small
\begin{align*}
    \Vmax[\X,\H,T]
    &\le
    \left(\sup_{r_t}\sup_{s_t}
    \mathbb E_{\substack{x_t\sim s_t\\\ell_t\sim r_t}}\right)_{t=1}^T
    \left\{\sum_{t=1}^T
        \left(\ell_t(x_t)-\mathbb E_t'\ell_t'(x_t')\right)
    \right\}\\
    &\quad+
    \left(\sup_{r_t}\inf_{s_t}
    \mathbb E_{\substack{x_t\sim s_t\\\ell_t\sim r_t}}\right)_{t=1}^T
    \sup_{u\in\X}
    \left\{\sum_{t=1}^T\mathbb E_t'
        \left(\ell_t'(x_t')-\ell_t'(u)\right)
    \right\}\\
    &\quad+
    \left(\sup_{r_t}\sup_{s_t}
    \mathbb E_{\substack{x_t\sim s_t\\\ell_t\sim r_t}}\right)_{t=1}^T
    \sup_{u\in\X}
    \left\{\sum_{t=1}^T\mathbb E_t'\ell_t'(u)
        -T\max_{t\in[T]}\ell_t(u)
    \right\}.
\end{align*}
}
The first term is zero by the tower property, since
\(\mathbb E_t'\ell_t'(x_t')
=\mathbb E[\ell_t(x_t)\mid x_{1:t-1},\ell_{1:t-1}]\).
For the second term, fix \(\eta>0\). Since \(r_t\) is announced before
\(s_t\) in the minimax-swapped game, choose \(s_t\) as the pure strategy of
playing a measurable \(\eta\)-optimal point of
\[
    \inf_{x\in\X}
    \mathbb E_{\ell_t'\sim r_t(\cdot\mid x_{1:t-1},\ell_{1:t-1})}
    \ell_t'(x).
\]
Then \(\mathbb E_t'(\ell_t'(x_t')-\ell_t'(u))\le\eta\) for every \(u\in\X\)
along every realized path, so the second term is at most \(T\eta\).

In the third term, the payoff depends only on the chosen loss kernels and
sampled losses. Player actions only provide auxiliary randomness for choosing
future kernels. By backward induction, averaging over these actions cannot
exceed the best continuation, so their distributions can be replaced by
deterministic choices. For any fixed resulting strategy, those choices can
be reconstructed from the loss history. Hence the kernels may be written
as \(r_t(\cdot\mid\ell_{1:t-1})\).  Finally, substituting the notation
\(
    \mathbb E_t'\ell_t'(u)
    =
    \int\ell(u)\,r_t(d\ell\mid\ell_{1:t-1})
    =m_t(u)
\)
and letting \(\eta\downarrow0\) proves the claim.
\end{proof}

\section{Proof of \cref{thm:fully-sequential-bound}: Sequential Bounds}\label{app:seq-upper}

It will be useful for a tight characterization of the $\max$-regret to establish lower bounds in terms of the sequential fat-shattering dimension, as we show next.
\begin{proposition}[Lower bound]
\label{prop:lower-bound-sfat}
{
Under \cref{ass:minimax_theorem_consequence}, for every \(\alpha>0\),
\[
    \frac{\Vmax[\X,\H,T]}{T}
    \ge
    \frac{\alpha}{2}
    \left(
        \frac{\sfat[\alpha](\cL[\X,\H])}{T}
        \wedge 1
    \right).
\]
}
\end{proposition}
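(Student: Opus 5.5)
The plan is to lower bound the primal (inf--sup) form \eqref{eqn:max_regret_value} directly by exhibiting, against an arbitrary player, an adaptive adversary built from a sequentially shattered tree. Fix \(\alpha>0\) and set \(n\defi\min\{\sfat[\alpha](\cL[\X,\H]),T\}\). We may assume \(n\ge1\); this forces \(\alpha\le2\Lambda\), since functions in \(\cL[\X,\H]\) take values in \([-\Lambda,\Lambda]\). Take a depth-\(n\) predictable tree \(\mathbf z\) over \(\H\) that is sequentially \(\alpha\)-shattered by \(\cL[\X,\H]\), with witness thresholds \(\mathbf s\). If sfat exceeds \(T\), restrict a deeper shattered tree to its first \(T\) levels.

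First we clip the thresholds. Whenever a node is shattered, both signs are realized by values in \([-\Lambda,\Lambda]\). Hence \(s_t(\varepsilon_{<t})\in[-\Lambda+\alpha/2,\Lambda-\alpha/2]\subseteq[-\Lambda,\Lambda]\), so no modification is actually needed. The adversary then runs two phases.

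\emph{Shattering phase} (\(t\le n\)). Along the realized sign path \(\varepsilon_{<t}\), the adversary has already observed the player's deterministic action \(x_t\). It sets
\[
    \varepsilon_t\defi\operatorname{sign}\bigl(\langle z_t(\varepsilon_{<t}),x_t\rangle-s_t(\varepsilon_{<t})\bigr)
\]
(with \(+1\) at ties) and plays the loss
\[
    \ell_t(u)=\varepsilon_t\bigl(\langle z_t(\varepsilon_{<t}),u\rangle-s_t(\varepsilon_{<t})\bigr).
\]
This is admissible: its slope \(\varepsilon_t z_t\) lies in \(\H\) by central symmetry, and its offset \(-\varepsilon_t s_t\) lies in \([-\Lambda,\Lambda]\). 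By construction \(\ell_t(x_t)\ge0\).

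\emph{Padding phase} (\(n<t\le T\)). The adversary plays the constant loss \(\ell_t\equiv-\alpha/2\), i.e.\ slope \(0\in\H\) and offset \(-\alpha/2\in[-\Lambda,\Lambda]\). This phase is the key design choice. Zero losses would push the comparator's \(\max\) up to \(0\) and destroy the bound. Constant losses equal to exactly the shattering margin leave the comparator's \(\max\) unchanged, and they cost the player precisely \(-\alpha/2\) per round.

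For the comparator, take \(u=u_\varepsilon\in\X\) with \(h_\varepsilon=\langle\cdot,u_\varepsilon\rangle\) the shattering witness for the realized full sign path. The shattering definition gives \(\ell_t(u_\varepsilon)\le-\alpha/2\) for all \(t\le n\), hence \(\max_{t\le T}\ell_t(u_\varepsilon)\le-\alpha/2\). The player's total loss is at least \(0-(T-n)\alpha/2\). Therefore the \(\max\)-regret is at least
\[
    -(T-n)\frac{\alpha}{2}+T\frac{\alpha}{2}=\frac{n\alpha}{2}.
\]
Because this holds for every player strategy, \(\Vmax[\X,\H,T]\ge n\alpha/2\). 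Dividing by \(T\) gives \(\frac{\alpha}{2}\bigl(\frac{\sfat[\alpha]}{T}\wedge1\bigr)\), and taking the supremum over \(\alpha\) yields the statement.

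The main obstacle is the padding phase when \(\sfat<T\). Both naive choices fail: zero losses raise the comparator's \(\max\) to \(0\), while repeating tree nodes lets the player learn the signs. The constant loss \(-\alpha/2\) resolves this, and it is admissible only because \(\alpha\le2\Lambda\) and offsets range over \([-\Lambda,\Lambda]\). A secondary point is the validity of using deterministic sign choices. In the first (pre-minimax) expression of the value, the player's action \(x_t\) is fixed before \(\ell_t\) is chosen, so the sign can depend on \(x_t\) and no randomization or use of the minimax identity \(\circled{1}\) is needed.
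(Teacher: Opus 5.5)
As written, your shattering phase has a sign error that breaks the key comparator step. You set $\varepsilon_t=\operatorname{sign}(\langle z_t(\varepsilon_{<t}),x_t\rangle-s_t(\varepsilon_{<t}))$ and $\ell_t(u)=\varepsilon_t(\langle z_t(\varepsilon_{<t}),u\rangle-s_t(\varepsilon_{<t}))$. For these choices, the shattering inequality $\varepsilon_t(\langle z_t(\varepsilon_{<t}),u_\varepsilon\rangle-s_t(\varepsilon_{<t}))\ge\alpha/2$ says exactly $\ell_t(u_\varepsilon)\ge\alpha/2$, not $\ell_t(u_\varepsilon)\le-\alpha/2$. You are steering the tree toward the child on the same side of the threshold as the player. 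The branch witness therefore agrees with $x_t$ rather than beating it, and you get no control on $\inf_u\max_t\ell_t(u)$. Switching to $u_{-\varepsilon}$ does not help, because in a predictable tree the path $-\varepsilon$ visits different nodes.

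The repair is to steer to the opposite child and negate the loss. Take $\varepsilon_t\defi-\operatorname{sign}(\langle z_t(\varepsilon_{<t}),x_t\rangle-s_t(\varepsilon_{<t}))$ and $\ell_t(u)\defi-\varepsilon_t(\langle z_t(\varepsilon_{<t}),u\rangle-s_t(\varepsilon_{<t}))$, i.e. slope $-\varepsilon_tz_t\in\H$ and offset $\varepsilon_ts_t\in[-\Lambda,\Lambda]$. Then $\ell_t(x_t)=|\langle z_t,x_t\rangle-s_t|\ge0$ and $\ell_t(u_\varepsilon)\le-\alpha/2$ for every $t\le n$. The rest of your computation goes through unchanged, including the padding phase and the final count $-(T-n)\alpha/2+T\alpha/2=n\alpha/2$.

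With this fix your proof is correct and is a mild variant of the paper's. The paper uses exactly these losses: slope $-\varepsilon_tg_t$, offset $\varepsilon_ts_t$, and padding $(0,-\alpha/2)$. It differs in drawing the $\varepsilon_t$ as independent Rademacher signs, oblivious to the player, and in arguing in the minimax-swapped form given by \cref{ass:minimax_theorem_consequence}. There every player's expected loss on the shattering rounds is $0$, because $\varepsilon_t$ has conditional mean zero.

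Your adversary instead reads $x_t$ in the primal inf--sup definition \eqref{eqn:max_regret_value} and chooses the sign deterministically. This makes the player's loss nonnegative pathwise and dispenses with the minimax interchange entirely, so your lower bound holds without that part of the assumption. The gain is modest, since the paper's random-sign adversary could also be used directly in the primal form (a supremum dominates an expectation). Still, your route is the more elementary of the two.
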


\begin{proof}
Fix \(\alpha>0\) and set
\[
    m\defi\min\{\sfat[\alpha](\cL[\X,\H]),T\}.
\]
If \(m=0\), the right-hand side is zero, and the identically zero adversarial
strategy shows that \(\Vmax[\X,\H,T]\ge0\). Hence, suppose \(m\ge1\).
Fix a depth-\(m\) predictable tree \((g_t)_{t=1}^m\) over \(\H\) and a
threshold tree \((s_t)_{t=1}^m\) that are sequentially \(\alpha\)-shattered
by \(\cL[\X,\H]\). This class is bounded in
\([-\LambdaPair[\X,\H],\LambdaPair[\X,\H]]\). Since both children of every
shattered node are realizable,
\[
    -\LambdaPair[\X,\H]+\frac{\alpha}{2}
    \le s_t
    \le\LambdaPair[\X,\H]-\frac{\alpha}{2}.
\]
In particular, \(\alpha/2\le\LambdaPair[\X,\H]\).

Let \(\varepsilon_1,\ldots,\varepsilon_m\) be iid Rademacher signs and set
\[
    (G_t,B_t)
    \defi
    \begin{cases}
        \bigl(
            -\varepsilon_tg_t(\varepsilon_{<t}),
            \varepsilon_ts_t(\varepsilon_{<t})
        \bigr),
        &t\le m,\\
        (0,-\alpha/2),
        &m<t\le T,
    \end{cases}
    \qquad
    Y_t=(G_t,B_t).
\]
The symmetry assumption \(\H=-\H\) ensures that \(G_t\in\H\) for
\(t\le m\), and convexity and symmetry give \(0\in\H\) on the padded rounds.
The bounds above give
\(B_t\in[-\LambdaPair[\X,\H],\LambdaPair[\X,\H]]\) in both cases. Hence
\(Y_t\in\cZ[\X,\H]\), so this is an admissible adaptive adversarial
strategy.

For \(t\le m\), the sign \(\varepsilon_t\) is independent of the past and has
mean zero; for \(t>m\), \(Y_t\) is deterministic. Therefore
\[
    \mu_t
    =
    \mathbb E[G_t\mid Y_{1:t-1}]
    =0,
    \qquad
    \beta_t
    =
    \mathbb E[B_t\mid Y_{1:t-1}]
    =
    \begin{cases}
        0,&t\le m,\\
        -\alpha/2,&m<t\le T.
    \end{cases}
\]

Using the witnesses \(u_\varepsilon\) from
\cref{def:fat-shattering-dimensions}, for \(t\le m\) we have
\[
    -\langle G_t,u_\varepsilon\rangle-B_t
    =
    \varepsilon_t
    \left(
        \langle g_t(\varepsilon_{<t}),u_\varepsilon\rangle
        -
        s_t(\varepsilon_{<t})
    \right)
    \ge
    \frac{\alpha}{2}.
\]
For \(t>m\), the left-hand side equals \(\alpha/2\). Moreover, against this
fixed adversarial strategy, every player strategy has expected cumulative loss
\(\sum_{t=1}^T\beta_t=-(T-m)\alpha/2\). Therefore, after using \cref{ass:minimax_theorem_consequence}, we lower bounding the minimax value by the one for our specific adversarial strategy and obtain:
\[
\begin{aligned}
    \frac{\Vmax[\X,\H,T]}{T}
    &\ge
    \mathbb E\sup_{u\in\X}
    \left\{
        -\frac{(T-m)\alpha}{2T}
        +
        \min_{t\in[T]}
        \bigl(-\langle G_t,u\rangle-B_t\bigr)
    \right\}\\
    &\ge
    -\frac{(T-m)\alpha}{2T}
    +
    \mathbb E
    \min_{t\in[T]}
    \bigl(-\langle G_t,u_\varepsilon\rangle-B_t\bigr)\\
    &\ge
    -\frac{(T-m)\alpha}{2T}
    +
    \frac{\alpha}{2}
    =
    \frac{m\alpha}{2T}.
\end{aligned}
\]
Substituting the definition of \(m\) proves the claim.
\end{proof}

The proof of \cref{thm:fully-sequential-bound} will use several lemmas that can be found after the proof.

\begin{proof}\linkofproof{thm:fully-sequential-bound}
{
The lower bound is \cref{prop:lower-bound-sfat}. For the upper bound, write $\Lambda\defi\LambdaPair[\X,\H]$ and $\widetilde\H
    \defi
    \Lambda^{-1}\H$.
Scaling every affine loss by $1/\Lambda$ gives
\begin{equation*}
    \Vmax[\X,\H,T]
    =
    \Lambda
     \Vmax[\X,\widetilde\H,T],
    \qquad
    \sfat[\alpha]
    \left(
        \cL[\X,\widetilde\H]
    \right)
    =
    \sfat[\Lambda\alpha]
    \left(
        \cL[\X,\H]
    \right).
\end{equation*}
It therefore suffices to prove the normalized upper bound when $\Lambda=1$, which we assume below.
In this normalization, recall the body-indexed version of the offset
envelope class from \cref{def:offset-envelope-class}:
\[
    \Phi\circ(\tfrac12\cA[\X,\H])
    \defi
    \left\{
        (g,b)\mapsto
        \phi\!\left(\frac{\langle g,u\rangle+b}{2}\right):
        u\in\X,\ \phi\in\Phi
    \right\}.
\]
Fix $0<\alpha\le1$. We obtain, for
every adaptive adversarial strategy \(\pi\):
\begin{align*}
\begin{aligned}
    \mathbb{E}_{\pi}[\Gf[\X,\H,T](Y_{1:T})]
    &\circled{1}[\le] 
    2 \, \mathbb{E} _\pi \sup_{f\in\Phi\circ(\tfrac12\cA[\X,\H])}
    \left[
        \frac1T\sum_{t=1}^T P_tf - \frac2T\sum_{t=1}^T f(Y_t)
    \right] \\
    &\circled{2}[\le]
    C\alpha + \frac{C}{T} \left[ 1+ \int_\alpha^1 \log \Ninfseq(c_{\mathrm{off}}\beta,\Phi\circ(\tfrac12\cA[\X,\H]),2T) \,d\beta \right]  \\
    &\circled{3}[\le]
    C\alpha + \frac{C}{T}
    \left[
        1+ \int_\alpha^1 \log \Ninfseq(c_{\mathrm{off}}\beta/4,\cL[\X,\H],2T) \,d\beta + \frac{17\log 5}{c_{\mathrm{off}}} \int_\alpha^1\frac{d\beta}{\beta}
    \right] \\
&\circled{4}[\le]
    C\alpha + \frac{C}{T} \left[ 1+ \log\!\frac{e}{\alpha} + \int_\alpha^1 \sfat[c\beta](\cL[\X,\H]) \log^\gamma\!\left(\frac{eT}{\beta}\right) \,d\beta
    \right].
\end{aligned}
\end{align*}
    Where above, $\circled{1}$ uses \cref{lem:deterministic-domination-general}, and $\circled{2}$ uses \cref{prop:sequential-offset} with
\(\mathcal Z=\cZ[\X,\H]\) and
\(\mathcal C=\Phi\circ(\tfrac12\cA[\X,\H])\), writing
    \(c_{\mathrm{off}}\) for the scale constant in that proposition. We obtain $\circled{3}$ using
    \cref{lem:affine-offset-free-entropy} to make $\sfat$ of the linear class $\cL[\X,\H]$ appear as opposed to the one of the affine class, decreasing \(c_{\mathrm{off}}\) if necessary so it is in $(0, 1]$. Finally, in $\circled{4}$, we used \(\int_\alpha^1\beta^{-1}\,d\beta\le\log(e/\alpha)\), the bound of sequential covering numbers by $\sfat[c\beta]$ from
\citep[Corollary~1]{rakhlin2015online}, and we renamed \(c_{\mathrm{off}}/4\) as the universal scale constant \(c\).
The deterministic-domination and affine-entropy proofs use only the normalized
range bound and therefore apply verbatim to \((\X,\H)\).

The right-hand side does not depend on the adversarial strategy \(\pi\). Hence,
for every \(\varepsilon>0\), choosing an \(\varepsilon\)-optimal
history-dependent adversarial strategy
\(\pi^\varepsilon\) for the nested problem yields
\begin{equation*}
    \frac{\Vmax[\X,\H,T]}{T}
    \le
    \mathbb E_{\pi^\varepsilon}
    \Gf[\X,\H,T](Y_{1:T})
    +
    \varepsilon.
\end{equation*}
Letting \(\varepsilon\downarrow0\) and then taking the infimum over
\(0<\alpha\le1\) proves the normalized upper bound. Multiplying it by
\(\Lambda\), substituting \(\alpha'=\Lambda\alpha\) and
\(\beta'=\Lambda\beta\), and then dropping the primes gives the stated upper
bound for \((\X,\H)\).
}

\textbf{High-dimensional instantiation in the $(p, q)$ case}. When \(T\le d\), the endpoint
\(p=q=\infty\) follows by choosing \(\alpha=1\) in the $\inf$ of the upper bound, combining the
endpoint case of \cref{cor:explicit_regimes} with \cref{prop:lower-bound-sfat}. For proving the remaining regimes, we study the general case where $\sfat[\alpha]$ decays like $\alpha^{-r}$ for $r \geq 1$.

In particular, for the remaining regimes with \(p\le q\), write
\( 
    r
    \defi
    \left(
        \frac{1}{p}
        -
        \left(\frac{1}{q}-\frac{1}{2}\right)_+
    \right)^{-1},
\)
so that
\(
    T^{-1/r}
    =
    T^{-(1/p-(1/q-1/2)_+)}.
\)

The class \(\cL[p,q^\ast][d]\) is exactly the linear class whose sequential
fat-shattering dimension is characterized in \cref{cor:explicit_regimes}.
The upper side of that result gives
\[
    \sfat[\beta](\cL[p,q^\ast][d])
    \le
    C_{p,q}
    \beta^{-r}
    \left(1+\log_+\frac{ed}{\beta}\right),
    \qquad 0<\beta\le1.
\]
Substituting this estimate into the upper bound proved above gives
\[
\begin{aligned}
    \frac{\Vmax[p,q^\ast,T]}{T}
    \le
    \inf_{0<\alpha\le1}
    \Bigg\{
        C\alpha
        &+
        \frac{C}{T}\log\!\frac{e}{\alpha}
        +
        \frac{C}{T}
        +
        \frac{C_{p,q}}{T}
        \int_\alpha^1
        \beta^{-r}
        \left(1+\log_+\frac{ed}{\beta}\right)
        \log^\gamma\!\left(\frac{eT}{\beta}\right)
        \,d\beta
    \Bigg\}.
\end{aligned}
\]

First suppose \(r=1\). Choose \(\alpha=T^{-1}\). Then
\[
    1+\log_+\frac{ed}{\beta}\le C\log(edT),
    \qquad
    \log\!\left(\frac{eT}{\beta}\right)\le C\log(eT)
    \qquad(T^{-1}\le\beta\le1),
\]
and therefore for a constant $C_{p, q, \gamma}$:
\[
\begin{aligned}
    \frac{\Vmax[p,q^\ast,T]}{T}
    &\le
    \frac{C}{T}
    +
    \frac{C}{T}\log(eT)
    +
    \frac{C_{p,q,\gamma}}{T}
    \log(edT)
    \int_{T^{-1}}^1
    \frac{1}{\beta}
    \log^\gamma\!\left(\frac{eT}{\beta}\right)
    \,d\beta
    \\
    &\le
    \frac{C}{T}
    +
    \frac{C}{T}\log(eT)
    +
    \frac{C_{p,q,\gamma}}{T}
    \log(edT)
    \log^{\gamma+1}(eT) = \bigotildel{\frac{1}{T^{1 / r}}}.
\end{aligned}
\]
This is the asserted bound when \(r=1\).
Now suppose \(r>1\). If \(T=1\), the claim is trivial after increasing the
constant, so assume \(T\ge2\).  For every \(\alpha\ge T^{-1}\),
\[
    \int_\alpha^1
    \beta^{-r}
    \left(1+\log_+\frac{ed}{\beta}\right)
    \log^\gamma\!\left(\frac{eT}{\beta}\right)
    \,d\beta
    \le
    C_{r,\gamma}
    \alpha^{1-r}
    \log(edT)
    \log^\gamma(eT/\alpha).
\]
Thus
\[
\begin{aligned}
    \frac{\Vmax[p,q^\ast,T]}{T}
    &\le
    \inf_{T^{-1}<\alpha\le1}
    \left\{
        C\alpha
        +
        \frac{C}{T}\log\!\frac{e}{\alpha}
        +
        \frac{C}{T}
        +
        \frac{C_{p,q,r,\gamma}}{T}
        \alpha^{1-r}
        \log(edT)
        \log^\gamma(eT/\alpha)
    \right\} \\
    &\circled{1}[\leq]
    C_{p,q,r,\gamma}
    \left(
        \frac{
            \log(edT)\log^\gamma(eT)
        }{T}
    \right)^{1/r}
    +
    \frac{C}{T}\log(eT) \\
    &= \bigotildel{\frac{1}{T^{1 / r}}}.
\end{aligned}
\]
where $\circled{1}$ uses
\(
    \alpha
    =
    \min\left\{
        1,\,
        \left(
            \frac{
                \log(edT)\log^\gamma(eT)
            }{T}
        \right)^{1/r}
    \right\} > \frac{1}{T},
\)
since if the minimum is attained at \(1\), the displayed claim is trivial after
increasing the constant. Otherwise, this choice satisfies \(\log(eT/\alpha)\le C_r\log(eT)\).

For the lower bound, it is enough that the first term in
\cref{cor:explicit_regimes} be active, up to constants, at some scale
\(\alpha\asymp T^{-1/r}\); thus \(T\le d\) is sufficient but not necessary.
Indeed, set \(\alpha_T\defi T^{-1/r}\). At this scale, the first term in that
corollary equals \(T\), while its saturated term equals \(d\) when \(T\le d\),
because \(\alpha_Td^{1/r}=(d/T)^{1/r}\ge1\). Hence, for some
\(c_{p,q}>0\),
\[
    \sfat[\alpha_T](\cL[p,q^\ast][d])
    \ge c_{p,q}T.
\]
By \cref{prop:lower-bound-sfat},
\[
    \frac{\Vmax[p,q^\ast,T]}{T}
    \ge
    \frac{\alpha_T}{2}
    \left(
        \frac{
            \sfat[\alpha_T](\cL[p,q^\ast][d])
        }{T}
        \wedge 1
    \right)
    \ge
    \frac{c_{p,q}\wedge1}{2T^{1/r}},
\]
which is the stated lower bound after renaming the constant.
\end{proof}

Now we prove the remaining lemmas needed in the proof of \cref{thm:fully-sequential-bound}. In order to do that, we only require the next lemma to apply conditional pushforwards along a fixed realized adaptive path. However, we show something more general without assuming any joint sampling or coupling of the measures. Let $Q_1,\dots,Q_T$ be probability measures on
$[-1,1]$, with distribution functions $F_1,\dots,F_T$ and means
\(
    m_t\defi \int z\,dQ_t(z).
\)
Let $z_1,\dots,z_T\in[-1,1]$ be arbitrary points.  Define
\[
    \bar m\defi \frac1T\sum_{t=1}^T m_t,
    \qquad
    \bar F(a)\defi \frac1T\sum_{t=1}^T F_t(a),
    \qquad
    \widehat F(a)\defi \frac2T\sum_{t=1}^T\mathbf{1}_{\{z_t\le a\}}.
\]
Also define the capped simplex
\[
    \Delta_T^{(2)}
    \defi
    \left\{
        \lambda\in\Delta_T:
        \lambda_t\le\frac2T
        \text{ for every }t
    \right\}.
\]

\begin{lemma}[One-dimensional capped domination]
\label{lem:one-dimensional-general}
With the notation above and \(\Phi\) as in
\cref{def:offset-envelope-class},
\[
    \bar m
    -
    \max_{\lambda\in\Delta_T^{(2)}}
    \sum_{t=1}^T\lambda_t z_t
    \le
    \sup_{\phi\in\Phi}
    \left[
        \frac1T\sum_{t=1}^T\int \phi\,dQ_t
        -
        \frac2T\sum_{t=1}^T\phi(z_t)
    \right].
\]
\end{lemma}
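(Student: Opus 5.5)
The plan is to exhibit one explicit element of \(\Phi\) for which the right-hand side already dominates the left-hand side. The key device is a threshold, or layer-cake, lower bound on the capped-simplex maximum. I am assuming that \(\Phi\) in \cref{def:offset-envelope-class} contains the hinge-type envelopes \(\phi_a(z)\defi(a-z)_+=\int_{-1}^{a}\mathbf 1_{\{z\le s\}}\,ds\) for thresholds \(a\in[-1,1]\), in particular \(a=1\); that definition is not reproduced above, so this is the point to check first. The layer-cake form \(\int\phi_a\,dQ_t=\int_{-1}^a F_t(s)\,ds\) explains why \(\bar F\) and \(\widehat F\) were introduced: the right-hand side for \(\phi_a\) equals \(\int_{-1}^{a}(\bar F(s)-\widehat F(s))\,ds\).

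First, I lower bound the capped maximum for an arbitrary threshold \(a\). Take \(\lambda^\star\in\Delta_T^{(2)}\) that puts weight \(2/T\) on the \(\lfloor T/2\rfloor\) largest \(z_t\) and the leftover fractional mass on the next one. Since \(\sum_t\lambda^\star_t=1\) and \(\lambda^\star_t\le 2/T\),
\[
    \max_{\lambda\in\Delta_T^{(2)}}\sum_{t=1}^T\lambda_t z_t
    \ge a-\sum_{t=1}^T\lambda^\star_t(a-z_t)
    \ge a-\sum_{t=1}^T\lambda^\star_t(a-z_t)_+
    \ge a-\frac2T\sum_{t=1}^T(a-z_t)_+ .
\]
Any feasible \(\lambda\) works for this inequality, and the cap \(2/T\) is exactly what produces the factor \(2\) in front of the empirical term.

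Second, I control the mean \(\bar m\) from above at the same threshold. For \(a=1\), every \(z\in[-1,1]\) satisfies \((1-z)_+=1-z\), so \(m_t=1-\int(1-z)\,dQ_t(z)=1-\int\phi_1\,dQ_t\) exactly. Hence \(\bar m=1-\frac1T\sum_t\int\phi_1\,dQ_t\). Subtracting the first bound at \(a=1\) gives
\[
    \bar m-\max_{\lambda\in\Delta_T^{(2)}}\sum_t\lambda_t z_t
    \le\frac1T\sum_{t=1}^T\int\phi_1\,dQ_t-\frac2T\sum_{t=1}^T\phi_1(z_t),
\]
which is at most the supremum over \(\Phi\). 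No coupling between the \(Q_t\) and the \(z_t\) is used, which is what the lemma advertises.

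The main obstacle is the membership of a suitable \(\phi\) in \(\Phi\). If \(\Phi\) contains \(\phi_1\), or any nonnegative function agreeing with \(1-z\) on \([-1,1]\), the argument is immediate. If instead \(\Phi\) is restricted, for instance to hinges at interior thresholds only, or to a normalized or truncated family, I would use a general threshold \(a\). The mean would be bounded via \(m_t\le a+\int(z-a)_+\,dQ_t\), and I would choose \(a\) as a quantile of \(\bar F\), such as \(a=\sup\{s:\bar F(s)\le 1/2\}\), so that the positive-part term \(\frac1T\sum_t\int(z-a)_+\,dQ_t\) is absorbed using \(\bar F(a)\le\tfrac12\) and the capped structure. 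I would then rescale so that the resulting function has the normalization required by \(\Phi\).
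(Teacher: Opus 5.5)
There is a genuine gap, and it sits exactly at the check you flagged. In \cref{def:offset-envelope-class}, $\Phi$ consists of \emph{nondecreasing} $1$-Lipschitz functions with $\phi(-1)=0$. Your hinges $(a-z)_+$ are nonincreasing, so they are not in $\Phi$; for instance, $\phi_1(z)=1-z$ has $\phi_1(-1)=2$.

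Independently of membership, your second step has a sign error. Combining $\max_{\lambda\in\Delta_T^{(2)}}\sum_t\lambda_tz_t\ge 1-\frac2T\sum_t\phi_1(z_t)$ with $\bar m=1-\frac1T\sum_t\int\phi_1\,dQ_t$ yields
\[
    \bar m-\max_{\lambda\in\Delta_T^{(2)}}\sum_{t=1}^T\lambda_tz_t
    \le
    \frac2T\sum_{t=1}^T\phi_1(z_t)-\frac1T\sum_{t=1}^T\int\phi_1\,dQ_t ,
\]
which is the reverse orientation of what you wrote. Your displayed inequality is in fact false: for $Q_t=\delta_0$ and $z_t=0$, the left side is $0$ while your right side is $1-2=-1$. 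Rewriting the correct display through $\psi(z)=1+z\in\Phi$ gives the target expression plus an additive $2$, which is useless.

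Your own layer-cake computation already exposes the mismatch. Nonincreasing hinges produce $\int_{-1}^a(\bar F-\widehat F)$, whereas the left side is governed by $\widehat F-\bar F-1$ on the upper part of $[-1,1]$. The fallback with $a$ a quantile of $\bar F$ does not help either. There the term $\frac1T\sum_t\int(z-a)_+\,dQ_t$ enters the bound on $\bar m$ with a plus sign, can be of order one, and is not controlled by $\bar F(a)\le\frac12$.

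The fix is to flip the hinge and put the threshold at an \emph{empirical} quantile. Let $a$ be the $\lceil T/2\rceil$-th largest of $z_1,\dots,z_T$, and set $\phi_a(z)\defi(z-a)_+\in\Phi$.
\begin{itemize}
\item Since $z\le a+\phi_a(z)$, we get $\bar m\le a+\frac1T\sum_t\int\phi_a\,dQ_t$.
\item Your greedy capped weights $\lambda^\star$ put all their mass on points with $z_t\ge a$.
\item At most $\lceil T/2\rceil-1\le\lfloor T/2\rfloor$ points have $z_t>a$, and each of them receives the full weight $2/T$.
\item Hence $\sum_t\lambda^\star_tz_t=a+\frac2T\sum_t\phi_a(z_t)$.
\end{itemize}
Subtracting proves the lemma with this single $\phi_a$.

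This corrected argument is a specialization of the paper's proof. The paper writes the left side as $\int_{-1}^1(G-\bar F)$, where $G=(\widehat F-1)_+$ is the distribution function of the top-filling submeasure. It represents each $\phi\in\Phi$ as $\int_{-1}^z h$ with $h\in[0,1]$, and bounds the left side by $\int(\widehat F-\bar F-1)_+$. Your corrected choice corresponds to $h=\mathbf 1_{[a,1]}$. That choice already suffices because $G=0$ below $a$ and $G=\widehat F-1$ above it.
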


\begin{proof}
Let \(\nu\defi 2T^{-1}\sum_{t=1}^T\delta_{z_t}\). The measure $\nu$ has total mass
$2$.  Maximizing \(\sum_{t=1}^T\lambda_t z_t\) over
\(\lambda\in\Delta_T^{(2)}\) is equivalent to maximizing
$\int z\,d\theta(z)$ over all submeasures $\theta\le\nu$ with total mass $1$,
where $\theta$ being a submeasure of $\nu$ means that $\theta(A)\le\nu(A)$ for every measurable set
$A$.
The maximizing submeasure fills mass from the largest observed points downward.
If $G$ denotes its distribution function, then
\[
    G(a)=\bigl(\widehat F(a)-1\bigr)_+.
\]
Let \(\bar Q\defi T^{-1}\sum_{t=1}^T Q_t\). Then $\bar Q$ has distribution
function $\bar F$ and mean $\bar m$.  For any probability measure $\gamma$ on
$[-1,1]$ with distribution function $G_\gamma$, we have
\(
    \int z\,d\gamma(z)
    =
    1-\int_{-1}^1G_\gamma(a)\,da.
\)
Therefore
\[
\begin{aligned}
    \bar m
    -
    \max_{\lambda\in\Delta_T^{(2)}}
    \sum_{t=1}^T\lambda_t z_t
    &= \bar m-\int z\,d\theta(z)
    =
    \left[1-\int_{-1}^1\bar F(a)\,da\right]
    -
    \left[1-\int_{-1}^1G(a)\,da\right] \\
    &= \int_{-1}^1\bigl(G(a)-\bar F(a)\bigr)\,da
    \le
    \int_{-1}^1
    \bigl(
        \widehat F(a)-\bar F(a)-1
    \bigr)_+,
\end{aligned}
\]
where the inequality follows from
\(G(a)=(\widehat F(a)-1)_+\) and \(\bar F(a)\ge0\). It remains to
identify the last integral.
Every $\phi\in\Phi$ admits a representation
\(
    \phi(z)=\int_{-1}^z h(a)\,da
\)
for some measurable $h:[-1,1]\to[0,1]$.  Conversely, every such $h$ defines a
function in $\Phi$.  Hence
\[
    \int \phi\,dQ_t
    =
    \int_{-1}^1 h(a)\bigl(1-F_t(a)\bigr)\,da,
    \qquad
    \text{and}
    \qquad
    \phi(z_t)
    =
    \int_{-1}^1 h(a)\mathbf{1}_{\{a\le z_t\}}\,da.
\]
Therefore
\[
    \frac1T\sum_{t=1}^T\int \phi\,dQ_t
    -
    \frac2T\sum_{t=1}^T\phi(z_t)
    =
    \int_{-1}^1
        h(a)
        \bigl(
            \widehat F(a)-\bar F(a)-1
        \bigr)
    \,da.
\]
Taking the supremum over measurable $h:[-1,1]\to[0,1]$ gives the positive part
integral, and therefore the claim.
\end{proof}

Now fix a realized path $y_t=(g_t,b_t)\in\cZ[\X,\H]$, $t=1,\ldots,T$, and the corresponding kernels $P_t(\cdot\mid y_{1:t-1})$.

\begin{lemma}[Convex hull-distance bound by capped-simplex process]
\label{lem:deterministic-domination-general}
Suppose that $\LambdaPair[\X,\H]=1$. For every realized path,
\begin{equation*}
    \Gf[\X,\H,T](y_{1:T})
    \le
    2
    \sup_{f\in\Phi\circ(\tfrac12\cA[\X,\H])}
    \left[
        \frac1T
        \sum_{t=1}^T
        P_tf
        -
        \frac2T
        \sum_{t=1}^T
        f(y_t)
    \right].
\end{equation*}
Above, the function  class in the supremum is defined in \cref{def:offset-envelope-class}.
\end{lemma}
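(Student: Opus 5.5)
The plan is to reduce to \cref{lem:one-dimensional-general} pointwise in \(u\), after shrinking the simplex to the capped one and rescaling the affine scores into \([-1,1]\).

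\textbf{Step 1: pass to the capped simplex.} Fix \(u\in\X\). Since \(\Delta_T^{(2)}\subseteq\Delta_T\),
\[
    \max_{\lambda\in\Delta_T}\sum_t\lambda_t(\langle g_t,u\rangle+b_t)
    \ge
    \max_{\lambda\in\Delta_T^{(2)}}\sum_t\lambda_t(\langle g_t,u\rangle+b_t),
\]
so the bracket defining \(\Gf[\X,\H,T]\) only increases when \(\Delta_T\) is replaced by \(\Delta_T^{(2)}\).

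\textbf{Step 2: rescale to \([-1,1]\).} Set \(z_t\defi(\langle g_t,u\rangle+b_t)/2\). With \(\LambdaPair[\X,\H]=1\) we have \(|\langle g_t,u\rangle|\le1\) and \(b_t\in[-1,1]\), so \(z_t\in[-1,1]\). Let \(Q_t\) be the pushforward of \(P_t(\cdot\mid y_{1:t-1})\) under the map \((g,b)\mapsto(\langle g,u\rangle+b)/2\). Then \(Q_t\) is supported on \([-1,1]\). By the weak definition of \(\mu_t\) and linearity, its mean is \(m_t=(\langle\mu_t,u\rangle+\beta_t)/2\).

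With these choices, the bracket for this \(u\), restricted to the capped simplex, equals
\[
    2\Bigl(\bar m-\max_{\lambda\in\Delta_T^{(2)}}\sum_t\lambda_tz_t\Bigr).
\]

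\textbf{Step 3: apply the one-dimensional lemma.} \cref{lem:one-dimensional-general} needs no coupling, so it applies to these \(Q_t\) and \(z_t\). It bounds the quantity above by
\[
    2\sup_{\phi\in\Phi}\Bigl[\frac1T\sum_t\int\phi\,dQ_t-\frac2T\sum_t\phi(z_t)\Bigr].
\]
By change of variables, \(\int\phi\,dQ_t=P_tf\) and \(\phi(z_t)=f(y_t)\), where \(f(g,b)=\phi\bigl((\langle g,u\rangle+b)/2\bigr)\). This \(f\) lies in \(\Phi\circ(\tfrac12\cA[\X,\H])\).

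\textbf{Step 4: take the supremum over \(u\).} The supremum over \(\phi\in\Phi\) for fixed \(u\) is at most the supremum over the whole class \(\Phi\circ(\tfrac12\cA[\X,\H])\). Taking \(\sup_{u\in\X}\) on the left then gives the claim.

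The only point needing care is measurability and integrability of the pushforward. This follows because evaluation maps are measurable by the footnoted convention and all integrands are bounded. The main potential obstacle is making sure the class \(\Phi\) from \cref{def:offset-envelope-class} is exactly the class of functions \(z\mapsto\int_{-1}^z h\) with \(h\in[0,1]\) on \([-1,1]\), as used in the previous lemma. If its normalization differed, I would adjust the rescaling constant accordingly.
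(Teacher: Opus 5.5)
Your proposal is correct and follows the paper's proof step for step: fix $u$, replace $\Delta_T$ by the capped simplex $\Delta_T^{(2)}$ (which can only enlarge the bracket), push $P_t$ forward under $(g,b)\mapsto(\langle g,u\rangle+b)/2$ to obtain laws on $[-1,1]$ with mean $(\langle\mu_t,u\rangle+\beta_t)/2$, apply \cref{lem:one-dimensional-general} with the factor $2$ pulled out, and take the supremum over $u$. Your remaining worry is unfounded. A nondecreasing $1$-Lipschitz $\phi$ with $\phi(-1)=0$ is exactly $z\mapsto\int_{-1}^z h$ with $h\in[0,1]$, which is the representation that lemma already uses, so no rescaling adjustment is needed.
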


\begin{proof}
Fix $u\in\X$ and define $z_t^u
    \defi
    \langle g_t,u\rangle+b_t$. 
Its predictable mean is
\begin{equation*}
    m_t^u
    \defi
    \int_{\cZ[\X,\H]}
    \left(
        \langle g,u\rangle+b
    \right)
    \,dP_t(g,b\mid y_{1:t-1})
    =
    \langle \mu_t,u\rangle+\beta_t,
\end{equation*}
where we abbreviate $\mu_t=\mu_t(y_{1:t-1})$ and $\beta_t=\beta_t(y_{1:t-1})$. Since $\LambdaPair[\X,\H]=1$, we have $|\langle g_t,u\rangle|\le1$. Together with $b_t\in[-1,1]$, this gives $z_t^u\in[-2,2]$. Let $Q_t^u$ be the pushforward of $P_t(\cdot\mid y_{1:t-1})$ under
\begin{equation*}
    (g,b)
    \mapsto
    \frac{\langle g,u\rangle+b}{2}.
\end{equation*}
Then $Q_t^u$ is supported on $[-1,1]$ and has mean $m_t^u/2$. Therefore,
\begin{align*}
    \frac1T\sum_{t=1}^T m_t^u
    -
    \max_{\lambda\in\Delta_T}
    \sum_{t=1}^T\lambda_t z_t^u
    &\le
    \frac1T\sum_{t=1}^T m_t^u
    -
    \max_{\lambda\in\Delta_T^{(2)}}
    \sum_{t=1}^T\lambda_t z_t^u \\
    &=
    2
    \left[
        \frac1T\sum_{t=1}^T\frac{m_t^u}{2}
        -
        \max_{\lambda\in\Delta_T^{(2)}}
        \sum_{t=1}^T\lambda_t\frac{z_t^u}{2}
    \right] \\
    &\circled{1}[\le]
    2
    \sup_{\phi\in\Phi}
    \left[
        \frac1T\sum_{t=1}^T
        P_t
        \phi\left(
            \frac{\langle \cdot,u\rangle+\cdot}{2}
        \right)
        -
        \frac2T\sum_{t=1}^T
        \phi\left(
            \frac{\langle g_t,u\rangle+b_t}{2}
        \right)
    \right].
\end{align*}
Here $\circled{1}$ applies \cref{lem:one-dimensional-general} to $Q_1^u,\ldots,Q_T^u$ and $z_1^u/2,\ldots,z_T^u/2$. Taking the supremum over $u\in\X$ proves the claim.
\end{proof}

We first control the offset process for a finite class of pair-trees (trees that have a pair of numbers at each node), and then
apply this estimate scale by scale to sequential covers in the chaining argument.
\begin{lemma}[Finite-class offset estimate]\label{lem:finite-class-offset}
Let \(b>0\), and let \(\mathcal A\) be a finite nonempty family of nonnegative
pair-trees
\(
    a
    =
    \left\{
        a_t^+(\epsilon_{1:t-1}),
        a_t^-(\epsilon_{1:t-1})
    \right\}_{t\le T}
\)
with values in \([0,b]\).  For
\begin{equation}
    M_a(\epsilon)
    \defi
    \sum_{t=1}^T
    \left[
        a_t^{-\epsilon_t}(\epsilon_{1:t-1})
        -
        2a_t^{\epsilon_t}(\epsilon_{1:t-1})
    \right] \quad \text{we have} \quad
    \mathbb E_\epsilon
    \left[
        \sup_{a\in\mathcal A}
        \frac1T M_a(\epsilon)
    \right]_+
    \le
    \frac{4b}{T}
    \bigl(1+\log|\mathcal A|\bigr).
\end{equation}
\end{lemma}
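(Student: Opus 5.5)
The plan is an exponential-moment (Chernoff/union) argument in which the factor $2$ in $M_a$ supplies a negative drift. This drift makes each $\exp(\lambda M_a)$ a supermartingale for a fixed $\lambda\asymp 1/b$. The bound then follows with $\log|\mathcal A|$ rather than $\sqrt{T\log|\mathcal A|}$.

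\emph{Step 1 (one-step moment bound).} Fix $a\in\mathcal A$ and condition on $\epsilon_{1:t-1}$. Write $x\defi a_t^+(\epsilon_{1:t-1})$ and $y\defi a_t^-(\epsilon_{1:t-1})$, both in $[0,b]$. The increment $X_t\defi a_t^{-\epsilon_t}-2a_t^{\epsilon_t}$ equals $y-2x$ or $x-2y$, each with probability $1/2$. I will show that for $\lambda\defi 1/(4b)$,
\[
    \mathbb E\bigl[e^{\lambda X_t}\mid\epsilon_{1:t-1}\bigr]
    =\tfrac12\bigl(e^{\lambda(y-2x)}+e^{\lambda(x-2y)}\bigr)\le 1 .
\]
The map $(s,r)\mapsto e^{r-2s}+e^{s-2r}$ is convex, so on the square $[0,c]^2$ with $c=\lambda b=1/4$ its maximum is attained at a vertex. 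The four vertex values are:
\begin{itemize}
\item $(0,0)$ gives $2$;
\item $(c,c)$ gives $2e^{-c}\le 2$;
\item $(c,0)$ and $(0,c)$ both give $e^{c}+e^{-2c}$, which is at most $2$ for $c=1/4$ (numerically $\approx1.284+0.607$).
\end{itemize}
Hence the average is at most $1$.

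\emph{Step 2 (supermartingale and union bound).} By Step 1 and the tower property applied backwards in $t$, we get $\mathbb E_\epsilon e^{\lambda M_a(\epsilon)}\le 1$ for each fixed $a$. Therefore
\[
    \mathbb E\, e^{\lambda(\sup_a M_a)_+}
    \le 1+\sum_{a\in\mathcal A}\mathbb E\, e^{\lambda M_a}
    \le 1+|\mathcal A| .
\]

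\emph{Step 3 (Jensen).} Since $z\mapsto e^{\lambda z}$ is convex, $\exp(\lambda\,\mathbb E[\sup_a M_a]_+)\le 1+|\mathcal A|$. It follows that
\[
    \mathbb E\Bigl[\sup_a M_a\Bigr]_+\le 4b\log(1+|\mathcal A|)\le 4b\bigl(1+\log|\mathcal A|\bigr),
\]
using $\log(1+N)\le\log 2+\log N$. Dividing by $T$ gives the claim, since $[\cdot]_+$ commutes with the positive scaling by $1/T$.

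The only delicate step is Step 1. One needs the drift coefficient $2$ so that the vertex value $e^{c}+e^{-2c}$ dips below $2$ for small $c$; its derivative at $c=0$ is $1-2<0$. The choice $\lambda=1/(4b)$ has to be small enough for this and still yield the stated constant $4$.
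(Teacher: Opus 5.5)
Your proof is correct and follows the paper's route: the same one-step exponential-moment bound with $\lambda = 1/(4b)$, checked by convexity at the four corners, which makes $\mathbb E e^{\lambda M_a}\le 1$, followed by a union bound over $\mathcal A$. The only difference is the last step. You apply Jensen to $\mathbb E\, e^{\lambda(\sup_a M_a)_+}\le 1+|\mathcal A|$, whereas the paper integrates the union-bounded tail beyond the cutoff $4b\log|\mathcal A|$; your version gives the marginally sharper $4b\log(1+|\mathcal A|)$.
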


\begin{proof}
Indeed, fix \(a\in\mathcal A\).  Conditionally on \(\epsilon_{1:t-1}\), write
\(
    r=\frac{a_t^+(\epsilon_{1:t-1})}{b},
    s=\frac{a_t^-(\epsilon_{1:t-1})}{b}.
\)
Then \(r,s\in[0,1]\).  For a sufficiently small universal \(\lambda>0\),
\[
    \frac12 e^{\lambda(s-2r)}
    +
    \frac12 e^{\lambda(r-2s)}
    \le 1.
\]
Indeed, for fixed \(s\), the left-hand side is convex in \(r\), so its maximum
over \(r\in[0,1]\) is attained at \(r=0\) or \(r=1\).  For each of these two
choices, the resulting function of \(s\) is again convex, so it is enough to
check the four corners of \([0,1]^2\).  The claim holds, for example, for
\(\lambda=1/4\).  Therefore
\(
    \mathbb E_\epsilon \exp(\lambda M_a(\epsilon)/b)\le 1
\).
Hence
\(
    \mathbb P_\epsilon(M_a(\epsilon)>u)\le e^{-\lambda u/b}
\).
By a union bound,
\[
    \mathbb P_\epsilon
    \left(
        \sup_{a\in\mathcal A}M_a(\epsilon)>u
    \right)
    \le
    \min\{1,|\mathcal A|e^{-\lambda u/b}\}.
\]
Writing \(N=|\mathcal A|\) and integrating at the cutoff
\(u_0=4b\log N\) gives
\[
    \mathbb E_\epsilon\left[\sup_{a\in\mathcal A}M_a(\epsilon)\right]_+
    \le
    u_0+\int_{u_0}^\infty Ne^{-u/(4b)}\,du
    =4b(1+\log N).
\]
Dividing by \(T\) proves the lemma.
\end{proof}

Assume that every \(h\in\mathcal C\) is measurable and that \(\mathcal C\) is
pointwise measurable: there is a countable \(\mathcal C_0\subseteq\mathcal C\)
such that every \(h\in\mathcal C\) is the pointwise limit of a sequence in
\(\mathcal C_0\). Thus the supremum below equals a countable supremum and is
measurable. Since \(0\le h\le2\), the quantity inside the supremum lies in
\([-4,2]\), so it is integrable without any further assumption.
\begin{proposition}[Sequential one-sided offset bound]
\label{prop:sequential-offset}
Let $\mathcal Z$ be any measurable space and let \(\mathcal C\) be a nonempty
pointwise-measurable class of measurable functions \(\mathcal Z\to[0,2]\).
For every adaptive strategy of
kernels $P_t(\cdot\mid X_{1:t-1})$ supported on $\mathcal Z$, and every
$0<\delta\le 1$,
\[
\begin{aligned}
    &\mathbb E
    \sup_{h\in\mathcal C}
    \left[
        \frac1T\sum_{t=1}^T P_th
        -
        \frac2T\sum_{t=1}^T h(X_t)
    \right]
    \le
    4\delta
    +
    \frac{16}{T}
    +
    \frac{48}{T}
        \int_\delta^1
        \log
        \Ninfseq(\alpha/4,\mathcal C,2T)
        \,d\alpha,
\end{aligned}
\]
\end{proposition}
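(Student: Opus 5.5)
The plan is a sequential symmetrization adapted to the offset (factor $2$), followed by chaining over sequential covers at dyadic scales, with \cref{lem:finite-class-offset} controlling each scale.

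\textbf{Step 1: symmetrization.} Introduce a tangent sequence $X_t'\sim P_t(\cdot\mid X_{1:t-1})$, conditionally independent of $X_t$, so that $P_th=\mathbb E[h(X_t')\mid X_{1:t-1}]$. Pull the conditional expectations outside the supremum by Jensen. Then split the offset as $2h(X_t)=\tfrac32 h(X_t)+\tfrac12 h(X_t)$, and symmetrically place a matching $\tfrac12 h(X_t')$ penalty on the tangent side; the factor-$2$ negative drift leaves enough slack for this. The resulting expression is symmetric under swapping $X_t\leftrightarrow X_t'$. Run the standard backward-induction argument of \citet{rakhlin2015online}: at each round, replace the swap by a Rademacher sign $\epsilon_t$ and take the supremum over the pair $(x_t,x_t')$, with pairs depending on $\epsilon_{<t}$. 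This bounds the left-hand side by a constant times
\[
\sup_{\mathbf x,\mathbf x'}\mathbb E_\epsilon\sup_{h\in\mathcal C}\frac1T\sum_{t=1}^T\Bigl[h(x_t^{-\epsilon_t})-c\,h(x_t^{\epsilon_t})\Bigr],
\]
with $c>1$. After rescaling, this is exactly of the pair-tree form $a_t^{\pm}=h(x_t^{\pm}(\epsilon_{<t}))$ in \cref{lem:finite-class-offset}. Equivalently, the pair tree can be viewed as one $\mathcal Z$-valued tree of depth $2T$, which is why the covering numbers carry the argument $2T$. The first point to check is whether the split constants come out exactly as in the statement (factor $2$ versus $1$); if not, I would adjust the lemma's drift constant, since its proof only needs some $\lambda$ working at the four corners.

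\textbf{Step 2: chaining.} Set $\alpha_j=2^{-j}$ and, for each $j$, take a sequential $\alpha_j/4$-cover $V_j$ of $\mathcal C$ on the depth-$2T$ tree, with $|V_j|\le\Ninfseq(\alpha_j/4,\mathcal C,2T)$. Clip cover values to $[0,2]$ so they remain nonnegative. Telescope $h=(h-v_N)+\sum_{j\le N}(v_j-v_{j-1})+v_0$, where $v_0$ is a constant tree, $N$ is chosen with $\alpha_N\asymp\delta$, and $v_j=v_j[h,\epsilon]$ is pathwise.
\begin{itemize}
\item The residual $h-v_N$ costs at most $(1+c)\alpha_N/4\cdot$const, which is $\le 4\delta$.
\item The increments are the main difficulty. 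The offset structure is one-sided: the negative drift $-c\,h$ must be distributed across the scales. I would write $h=\sum_j (v_j-v_{j-1})$ and assign drift $-(c-1)2^{-j}$-proportional fractions to each link. A cleaner route is to apply the offset trick only at the coarsest level and handle the finer links with nonnegative parts.
\end{itemize}
Concretely, the finer links can be handled by bounding $\pm(v_j-v_{j-1})$ via nonnegative pair-trees of range $b\asymp\alpha_j$ with a self-penalty. The term $\sum_t[d_t^{-\epsilon}-2d_t^{\epsilon}]$ with $d=|v_j-v_{j-1}|\in[0,3\alpha_j/4]$ fits \cref{lem:finite-class-offset}, with $|\mathcal A|\le|V_j||V_{j-1}|$. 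The leftover positive terms, $+d_t^{\epsilon}$ sums, must be absorbed. This is where I expect trouble: the lemma as stated controls $a^{-}-2a^{+}$, not a signed difference. I would try to absorb these terms into the global drift $-(c-1)h$, using the inequality $h\ge v_j-\alpha_j/4$ to show that the total positive excess is dominated by the remaining drift.

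\textbf{Step 3: summing.} Each scale contributes $\frac{4b_j}{T}(1+2\log\Ninfseq(\alpha_j/4,\mathcal C,2T))$ with $b_j\asymp\alpha_j$. The $\sum_j\alpha_j\cdot\frac{1}{T}$ constants sum to $O(1/T)$, giving the $16/T$ term. Comparing the dyadic sum $\sum_j\alpha_j\log N(\alpha_j/4)$ with $\int_\delta^1\log\Ninfseq(\alpha/4,\mathcal C,2T)\,d\alpha$, using monotonicity of covering numbers, yields the $48/T$ integral term.
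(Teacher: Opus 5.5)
Your symmetrization in Step~1 is fine in substance. No splitting of the factor $2$ is needed, though. The random swap uses only the conditional exchangeability of $(X_t,X_t')$, not symmetry of the summand, so you land directly on $\mathbb E_\epsilon\sup_h\sum_t[h(z_t^{-\epsilon_t})-2h(z_t^{\epsilon_t})]$ over a pair tree. (The paper keeps the random pair tree and conditions on it instead of taking a supremum over trees; either works.)

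\textbf{The gap is in Step~2}, at exactly the point you flagged. \cref{lem:finite-class-offset} gains $\log N/T$ instead of $\sqrt{\log N/T}$ only because each level's negative drift is proportional to that level's own nonnegative increment. Ordinary signed links $\Delta_j=v_j-v_{j-1}$ lack this property. Your bound $\Delta^{-\epsilon}-2\Delta^{\epsilon}\le(d^{-\epsilon}-2d^{\epsilon})+4d^{\epsilon}$, with $d=|\Delta_j|$, leaves $\frac4T\sum_t d_t^{\epsilon_t}$. This can be of order $\alpha_j$ at every level, hence $O(1)$ after summing over $j$.

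It cannot be absorbed by $-(c-1)h$. Covers are arbitrary, so at a point where $h=0$ the clipped cover values may oscillate inside $[0,\alpha_j/4]$. The chain's total variation $\sum_j|\Delta_j|$ is then of order $\sum_j\alpha_j\asymp1$, while the drift there vanishes. Distributing $2^{-j}$-fractions of the drift fails for the same reason. Your fallback of using the offset only at the coarsest level gives $\alpha_j\sqrt{\log|V_j|/T}$ per finer link. That is a Dudley-type $T^{-1/2}$ bound, which is what the statement is designed to beat.

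\textbf{The missing idea is to chain with \emph{monotone lower approximations}.} Let $v_r$ be an $\alpha_r/2$-accurate cover element along the path, on both children, and set $c_s:=\max_{0\le r\le s}(v_r-\alpha_r/2)_+$. Then $c_s$ is nondecreasing in $s$ and $0\le c_s\le h\le c_s+\alpha_s$. Hence the increments $g_0=c_0$ and $g_s=\min\{c_s-c_{s-1},\alpha_{s-1}\}$ are nonnegative. The truncation is inactive on the chosen path; it only gives every increment tree the uniform envelope $\alpha_{s-1}$ that the lemma needs off the path. Using the upper approximation on the ghost child and the lower one on the selected child,
\[
h(z_t^{-\epsilon_t})-2h(z_t^{\epsilon_t})\le\alpha_S+\sum_{s=0}^S\bigl[g_s^{-\epsilon_t}-2g_s^{\epsilon_t}\bigr].
\]
So every level inherits the full factor-$2$ drift with nonnegative trees. \cref{lem:finite-class-offset} then applies level by level, after bounding each level's supremum by its positive part.

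Because $c_s$ depends on all of $v_0,\dots,v_s$, the level-$s$ class satisfies $\log|\mathcal G_s|\le\sum_{r\le s}\log|\mathcal V_r|$, not $\log(|V_j||V_{j-1}|)$ as in your Step~3. The nested sum is handled by exchanging the order of summation, $\sum_s\alpha_s\sum_{r\le s}A_r\le2\sum_r\alpha_rA_r$. The comparison with the integral then goes as you describe.
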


\begin{proof}
    We use are going to use a sequential symmetrization technique, at the level of predictable trees. The inspiration is \citep{rakhlin2015martingale} that also consider symmetrization for different adaptive processes. At every node of the adaptive process, draw two independent children from the distribution chosen at that node, obtaining a random pair-tree
\[
    \mathbf Z
    =
    \left\{
        Z_t^+(\epsilon_{1:t-1}),
        Z_t^-(\epsilon_{1:t-1})
    \right\}_{t\le T}.
\]
For a sign path $\epsilon\in\{\pm1\}^T$, the selected point at time $t$ is
\(Z_t^{\epsilon_t}(\epsilon_{1:t-1})\), and the tangent ghost point is
\(Z_t^{-\epsilon_t}(\epsilon_{1:t-1})\).
Let $\mathcal F_{t-1}$ be the sigma-field generated by the past signs and the sampled pairs along the selected path up to time $t-1$, and write $P_t$ for the kernel evaluated at the selected history. Then
\[
    P_th
    =\mathbb E[h(Z_t^{\epsilon_t})\mid\mathcal F_{t-1}]
    =\mathbb E[h(Z_t^{-\epsilon_t})\mid\mathcal F_{t-1}].
\]
Conditionally on the entire selected path and the signs, each ghost point still has law $P_t$, since future selected points depend only on the selected history, not on the ghosts. The selected path has the law of $X_{1:T}$, so conditional Jensen's inequality gives:
\[
    \mathbb E\sup_{h\in\mathcal C}\sum_{t=1}^T(P_th-2h(X_t))
    \le
    \mathbb E_{\mathbf Z,\epsilon}\sup_{h\in\mathcal C}
    \sum_{t=1}^T
    \left[
        h(Z_t^{-\epsilon_t}(\epsilon_{1:t-1}))
        -
        2h(Z_t^{\epsilon_t}(\epsilon_{1:t-1}))
    \right].
\]
We average over the random sign and the two
terms always use different children of the same node. It remains to bound the right-hand side conditionally on $\mathbf Z$ for which we perform a chaining argument.

The left-hand side of the proposition is at most \(2\). Hence the result is
immediate when \(\delta\ge1/2\), and we assume below that \(0<\delta<1/2\).
Fix the pair-tree $\mathbf Z$ and fix
$0<\delta\le 1$.  Let
\begin{equation}\label{eq:choice_of_S}
    S\defi \max\{s\ge 0:2^{-s}\ge\delta\},
    \qquad
    \alpha_s\defi 2^{-s}.
\end{equation}
At scale $\alpha_s$, take a pair-tree sequential cover $\mathcal V_s$
of $\mathcal C$ at accuracy $\alpha_s/2$.  A pair-tree of depth $T$ can be
encoded as an ordinary tree of depth $2T$ by listing, at each time $t$, first
the ``$+$'' child and then the ``$-$'' child.  Therefore we may choose
$\mathcal V_s$ with
\begin{equation}\label{eq:cover_size_bound}
    \log|\mathcal V_s|
    \le
    \log
    \Ninfseq(\alpha_s/2,\mathcal C,2T).
\end{equation}
We also clip the cover trees to $[0,2]$, which does not increase the covering
error.

By \cref{def:sequential-cover}, the approximation guarantee is pathwise. Fix
$h\in\mathcal C$ and a sign path $\epsilon$.  For each scale $s$, choose
$v_s\in\mathcal V_s$ approximating $h$ along the path on both children.  Define
the notation \(v_{s,t}^{+}\) for the cover value approximating
\(h(Z_t^+(\epsilon_{<t}))\), that is, the plus child of $v_s$ at time $t$. Similarly \(v_{s,t}^{-}\) for
\(h(Z_t^-(\epsilon_{<t}))\). We now bound both quantities at once using the
notation \(v_{s,t}^{\pm}\). By definition of the covers at scales $\alpha_r/2$, along the chosen path we have
the approximations
\begin{equation}\label{eq:max_lb_bounds}
   0 \leq  c_{s,t}^{\pm}
    \defi 
    \max_{0\le r\le s} \bigl(v_{r,t}^{\pm}-\alpha_r/2\bigr)_+ \leq h(Z_t^{\pm}) \leq c_{s, t}^{\pm} + \alpha_s \quad \text{ for all }s.
\end{equation}
Define increments by
\begin{equation} \label{eq:bounding_g_st}
    g_{0,t}^{\pm}\defi c_{0,t}^{\pm} \in [0, 2], \ \ \text{ and, for } s\ge 1,
    \ \  g_{s,t}^{\pm}
    \defi 
    \min\left\{
        c_{s,t}^{\pm}-c_{s-1,t}^{\pm},
        \alpha_{s-1}
    \right\} \in [0, \alpha_{s-1}].
\end{equation}
The truncation is needed only off the chosen path: it gives every possible
increment tree the uniform envelope \(\alpha_{s-1}\) required by the
finite-class estimate in \cref{lem:finite-class-offset}. Along the chosen path it is harmless, since
\begin{equation}\label{eq:sum_of_increments}
    c_{s,t}^{\pm}-c_{s-1,t}^{\pm}
    \le
    h(Z_t^\pm)-c_{s-1,t}^{\pm}
    \le
    \alpha_{s-1} \quad \text{ and so } \quad
    c_{S,t}^{\pm}
    =
    \sum_{s=0}^S g_{s,t}^{\pm}.
\end{equation}
Using that by \cref{eq:choice_of_S} we have \(\delta\le\alpha_S<2\delta\) and using \cref{eq:max_lb_bounds}, we have
\[
    c_{S,t}^{\pm} \leq
    h(Z_t^\pm)
    \le
    c_{S,t}^{\pm}+2\delta,
\] which along with \cref{eq:sum_of_increments}, yields $\circled{1}$ below:
\[
\begin{aligned}
    \mathbb E_\epsilon
    \sup_{h\in\mathcal C}
    \frac1T
    \sum_{t=1}^T
    \left[
        h(Z_t^{-\epsilon_t})
        -
        2h(Z_t^{\epsilon_t})
    \right]
    &\circled{1}[\le]
    2\delta
    +
    \sum_{s=0}^{S}
    \mathbb E_\epsilon
    \left[
        \sup_{g\in\mathcal G_s}
        \frac1T
        \sum_{t=1}^T
        \left[
            g_t^{-\epsilon_t}(\epsilon_{<t})
            -
            2g_t^{\epsilon_t}(\epsilon_{<t})
        \right]
    \right]_+ \\
    &\circled{2}[\leq]
    2\delta
    +
    \frac{8}{T}
    \sum_{s=0}^{S}
        \alpha_s
        \left(
            1+
            \sum_{r=0}^s \log|\mathcal V_r|
        \right)
    \\
    &\circled{3}[\le]
    2\delta
    +
    \frac{16}{T}
    \left[
        1+
        \sum_{s=0}^{S}
        \alpha_s\log|\mathcal V_s|
    \right] \\
    &\circled{4}[\le]
    2\delta
    +
    \frac{16}{T}
    +
    \frac{48}{T}
        \int_\delta^1
        \log \Ninfseq(\alpha/4,\mathcal C,2T)\,d\alpha .
\end{aligned}
\]
where in $\circled{2}$, we used the envelopes in \cref{eq:bounding_g_st} and
\cref{lem:finite-class-offset}. For \(s=0\), the envelope \(2\) gives the
factor \(8/T=8\alpha_0/T\). For \(s\ge1\), the envelope
\(\alpha_{s-1}=2\alpha_s\) gives \(8\alpha_s/T\). The possible increment
pair-trees at level $s$
form a finite class $\mathcal G_s$ satisfying
\(
    \log|\mathcal G_s|
    \le
    \sum_{r=0}^s\log|\mathcal V_r|
\). In $\circled{3}$ we used that \(\sum_{s=0}^S\alpha_s \leq 2\) and that with  \(A_r=\log|\mathcal V_r|\), we have
\[
    \sum_{s=0}^{S}
    \alpha_s
    \sum_{r=0}^s A_r
    =
    \sum_{r=0}^{S}
    A_r
    \sum_{s=r}^{S}\alpha_s
    \le
    \sum_{r=0}^{S}
    A_r
    \sum_{s=r}^{\infty}2^{-s}
    =
    2\sum_{r=0}^{S}\alpha_r A_r .
\]
Finally, in $\circled{4}$ we used the cover-size bound
\cref{eq:cover_size_bound} and monotonicity of the covering numbers. Indeed,
the term \(s=0\) is at most twice the integral over \([1/2,1]\) with covering
scale \(\alpha/4\), while each term \(s\ge1\) is at most the corresponding
integral over \([\alpha_s,\alpha_{s-1}]\). Since \(S\ge1\) and
\(\delta\le\alpha_S\), their sum is at most three times the integral over
\([\delta,1]\).
Averaging over \(\mathbf Z\) proves the proposition.
\end{proof}

\begin{proof}\linkofproof{cor:first-order-optimization-sfat}
Finite-dimensional compact balls satisfy
\cref{ass:minimax_theorem_consequence}. Apply
\cref{lemma:m-online-to-batch} to a minimax strategy for the \(\max\)-regret
game and output its average query. The rate is a consequence of
\cref{thm:fully-sequential-bound}.

We do not seek a computationally efficient strategy in this paper, but a valid
deterministic strategy can be obtained by taking sufficiently fine
discretizations of the iterated decision spaces \(B_p^d\) and
\(B_{q^\ast}^d\), using the Lipschitz continuity of \(\max\)-regret, and solving
    the resulting discrete problem coming from the deterministic definition of $V^{\max}_{p, q}$ by enumeration. This establishes first-order oracle efficiency, but the resulting computational time is exponential in \(d\) and \(T\).
\end{proof}

\subsection{Reducing the affine class to the linear one}
Here, we show that the covering number for the affine class is not much larger than the one for the linear class. Throughout this section, for simplicity, we assume that $\LambdaPair[\X,\H]=1$.

\begin{definition}[Offset envelope class]\label{def:offset-envelope-class}
Define
\begin{equation*}
    \Phi
    \defi
    \left\{
        \phi:[-1,1]\to[0,2]:
        \phi(-1)=0,\;
        \phi\ge0,\;
        \phi\text{ is nondecreasing and }1\text{-Lipschitz}
    \right\}.
\end{equation*}
The affine-composed class appearing in our analysis is defined as:
\begin{equation*}
    \Phi\circ(\tfrac12\cA[\X,\H])
    \defi
    \left\{
        (g,b)
        \mapsto
        \phi
        \left(
            \frac{\langle g,u\rangle+b}{2}
        \right):
        u\in\X,\;
        \phi\in\Phi
    \right\}.
\end{equation*}
\end{definition}
Since $|\langle g,u\rangle|\le1$ and $b\in[-1,1]$, every argument of $\phi$ belongs to $[-1,1]$. Therefore, every function in $\Phi\circ(\tfrac12\cA[\X,\H])$ is well-defined and bounded in $[0,2]$.

\begin{lemma}[Affine offsets do not increase the sequential entropy]
\label{lem:affine-offset-free-entropy}
With \(\Phi\) and \(\Phi\circ(\tfrac12\cA[\X,\H])\) as in
\cref{def:offset-envelope-class}, for every $n\ge1$ and every $0<\alpha\le1$,
\begin{equation*}
    \log
    \Ninfseq
    \left(
        \alpha,
        \Phi\circ(\tfrac12\cA[\X,\H]),
        n
    \right)
    \le
    \log
    \Ninfseq
    \left(
        \alpha/4,
        \cL[\X,\H],
        n
    \right)
    +
    \frac{17\log5}{\alpha}.
\end{equation*}
\end{lemma}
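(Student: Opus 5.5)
The plan is to build a sequential cover of $\Phi\circ(\tfrac12\cA[\X,\H])$ as a product of two pieces: a sequential cover of the linear part $\langle g,u\rangle$, and a uniform (non-sequential) finite net of the one-dimensional envelope class $\Phi$. The offset $b$ is read directly off the tree nodes, so it costs nothing.

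Fix a depth-$n$ tree $\mathbf z$ over $\cZ[\X,\H]$ with nodes $z_t(\varepsilon_{<t})=(g_t(\varepsilon_{<t}),b_t(\varepsilon_{<t}))$. Let $\mathbf g$ be the $\H$-valued tree obtained by keeping only the first coordinate. Take an optimal $\ell_\infty$ sequential $(\alpha/4)$-cover $\mathcal V$ of $\cL[\X,\H]$ on $\mathbf g$; its size is at most $\Ninfseq(\alpha/4,\cL[\X,\H],n)$. Clip its trees to $[-1,1]$, which does not increase the error because every function in $\cL[\X,\H]$ takes values there.

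For any $u\in\X$ and any path $\varepsilon$, there is $\mathbf v\in\mathcal V$ with $|\langle g_t,u\rangle-v_t|\le\alpha/4$ along the path. Then
\[
    \Bigl|\tfrac{\langle g_t,u\rangle+b_t}{2}-\tfrac{v_t+b_t}{2}\Bigr|\le\alpha/8 .
\]
Both arguments lie in $[-1,1]$: the first by the normalization, the second after clipping $(v_t+b_t)/2$ to $[-1,1]$. Since every $\phi\in\Phi$ is $1$-Lipschitz, $\phi$ evaluated at these two arguments differs by at most $\alpha/8$.

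Next we need a finite set $\Phi_\alpha$ of functions on $[-1,1]$ with the following property: for every $\phi\in\Phi$ there is $\psi\in\Phi_\alpha$ with $\sup_{x\in[-1,1]}|\phi(x)-\psi(x)|\le 7\alpha/8$, and $\log|\Phi_\alpha|\le 17\log5/\alpha$. Nondecreasing $1$-Lipschitz functions with $\phi(-1)=0$ can be discretized as follows. Take a grid of step $h\asymp\alpha$ on $[-1,1]$, so about $2/h$ points. Round the values to multiples of $h$. Consecutive rounded values then differ by an increment in $\{0,h,2h\}$ (or a similarly small set), because the slope lies in $[0,1]$. Interpolate linearly between grid points.

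This yields at most $3^{2/h}$ (or $5^{O(1/\alpha)}$) functions. The interpolation error is $O(h)$, from the Lipschitz property plus the rounding. Choosing, say, $h=\alpha/8$ with a careful choice of the increment alphabet gives the constant $17\log 5$. The base $5$ suggests an alphabet of five possible increments per step, arising from rounding both endpoints of each grid interval. This constant bookkeeping is the only delicate step, and I expect it to be the main obstacle. It is purely arithmetic: match the grid step and the rounding so that the total error is at most $7\alpha/8$ and the exponent is at most $17/\alpha$.

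Finally, define the candidate cover as the set of trees
\[
    \bigl(\psi((v_t+b_t)/2)\bigr)_{t\le n},\qquad \mathbf v\in\mathcal V,\ \psi\in\Phi_\alpha,
\]
where $b_t$ is the known node label. This is legitimate because cover trees may depend on $\mathbf z$. Given $(u,\phi)$ and $\varepsilon$, choose $\mathbf v$ as above and $\psi$ close to $\phi$. The triangle inequality gives an error of at most $\alpha/8+7\alpha/8=\alpha$ along the path. Hence this is an $\alpha$-cover of size $|\mathcal V|\cdot|\Phi_\alpha|$. Taking logarithms and the supremum over $\mathbf z$ yields the claimed bound.
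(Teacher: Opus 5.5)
Your proposal is correct and follows the paper's proof essentially step for step. Both arguments take the product of a sequential $\alpha/4$-cover of $\cL[\X,\H]$ on the $\mathbf g$-subtree, clipped to $[-1,1]$, with a uniform net of $\Phi$. That net is built on a grid of step $\alpha/8$, with values quantized to $\tfrac{\alpha}{8}\mathbb Z$, anchored at $\phi(-1)=0$, and with at most five increments per step. In both, the offsets $b_t$ are read directly off the tree. The differences are cosmetic: you interpolate linearly where the paper uses piecewise-constant $\psi$, and you give the $\Phi$-net an error budget of $7\alpha/8$ rather than $\alpha/4$. Your bookkeeping closes as in the paper, since $\lceil 16/\alpha\rceil\le 17/\alpha$ for $\alpha\le1$.
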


\begin{proof}
Fix a depth-$n$ tree
\begin{equation*}
    \mathbf z=(\mathbf g,\mathbf b),
    \qquad
    z_t(\varepsilon_{<t})
    =
    \left(
        g_t(\varepsilon_{<t}),
        b_t(\varepsilon_{<t})
    \right)
    \in
    \H\times[-1,1].
\end{equation*}
Let $\mathcal V$ be a sequential $\alpha/4$-cover of the linear class $\cL[\X,\H]$ on the $\H$-valued tree $\mathbf g$. Clip every value of every tree in $\mathcal V$ to $[-1,1]$; since the true linear evaluations lie in $[-1,1]$, this does not increase the covering error.

We also use a uniform $\alpha/4$-cover $\mathcal U$ of $\Phi$ in
\(\ell_\infty([-1,1])\), by real-valued functions on \([-1,1]\).  The
construction is elementary.  Put
\(\eta=\alpha/8\), choose a grid
\[
    -1=t_0<t_1<\cdots<t_M=1,
    \qquad
    t_j-t_{j-1}\le \eta,
    \qquad
    M\le \left\lceil\frac{2}{\eta}\right\rceil\le \frac{17}{\alpha}.
\]
For a given \(\phi\in\Phi\), quantize each \(\phi(t_j)\) to a nearest point
\(a_j\in\eta\mathbb Z\), set \(\psi(-1)=a_0\), and let \(\psi\) be
constant equal to \(a_j\) on \((t_{j-1},t_j]\) for \(1\le j\le M\). Then
\(\|\phi-\psi\|_\infty\le \eta/2+\eta\le\alpha/4\).  Moreover \(a_0=0\),
because \(\phi(-1)=0\), and the \(1\)-Lipschitz property gives
\[
    |a_j-a_{j-1}|
    \le
    |\phi(t_j)-\phi(t_{j-1})|+\eta
    \le 2\eta .
\]
Thus, after \(a_{j-1}\) is fixed, \(a_j\) has at most five possible
successors, namely the admissible grid values in
\(a_{j-1}+\{-2\eta,-\eta,0,\eta,2\eta\}\). Since \(a_0=0\) is fixed, only
\(a_1,\ldots,a_M\) contribute choices. Hence one may choose \(\mathcal U\) with
\[
    \log|\mathcal U|\le M\log 5\le \frac{17\log 5}{\alpha}.
\]
For each $v\in\mathcal V$ and $\psi\in\mathcal U$, define a real-valued tree on
$\mathbf z$ by
\[
    w_t^{v,\psi}(\varepsilon_{<t})
    \defi 
    \psi\!\left(
        \frac{
            v_t(\varepsilon_{<t})
            +
            b_t(\varepsilon_{<t})
        }{2}
    \right).
\]
The collection of all such trees has cardinality at most
$|\mathcal V||\mathcal U|$. Since $v_t\in[-1,1]$ and $b_t\in[-1,1]$, the argument of $\psi$ belongs to $[-1,1]$.

Now fix $u\in\X$, $\phi\in\Phi$, and the corresponding function
\begin{equation*}
    f_{u,\phi}(g,b)
    \defi
    \phi
    \left(
        \frac{\langle g,u\rangle+b}{2}
    \right)
    \in
    \Phi\circ(\tfrac12\cA[\X,\H]).
\end{equation*}
Fix a root-to-leaf path $\varepsilon$.  Since \(\mathcal V\) covers the linear
class, choose a tree \(v\in\mathcal V\)which approximates the linear function
$g\mapsto\langle g,u\rangle$ along this path:
\begin{equation*}
    \max_{t\le n}
    \left|
        \langle g_t(\varepsilon_{<t}),u\rangle
        -
        v_t(\varepsilon_{<t})
    \right|
    \le
    \alpha/4.
\end{equation*}
Choose \(\psi\in\mathcal U\) with
\(\|\phi-\psi\|_\infty\le \alpha/4\). Along this path, abbreviating
\(g_t=g_t(\varepsilon_{<t})\), \(b_t=b_t(\varepsilon_{<t})\), and
\(v_t=v_t(\varepsilon_{<t})\),
\begin{align*}
    \left|
        \phi
        \left(
            \frac{\langle g_t,u\rangle+b_t}{2}
        \right)
        -
        \psi
        \left(
            \frac{v_t+b_t}{2}
        \right)
    \right|
    &\le
    \left|
        \phi
        \left(
            \frac{\langle g_t,u\rangle+b_t}{2}
        \right)
        -
        \phi
        \left(
            \frac{v_t+b_t}{2}
        \right)
    \right|
    +
    \|\phi-\psi\|_\infty \\
    &\le
    \frac12
    \left|
        \langle g_t,u\rangle-v_t
    \right|
    +
    \frac{\alpha}{4}
    \le
    \frac{3\alpha}{8}
    \le
    \alpha.
\end{align*}
for every \(t\le n\). Hence
\begin{equation*}
    \Ninfseq
    \left(
        \alpha,
        \Phi\circ(\tfrac12\cA[\X,\H]),
        \mathbf z
    \right)
    \le
    \Ninfseq
    \left(
        \alpha/4,
        \cL[\X,\H],
        \mathbf g
    \right)
    |\mathcal U|.
\end{equation*}
Taking logarithms and then the supremum over $\mathbf z$ proves the claim.
\end{proof}

\section{Proofs for the iid case}

\begin{proof}\linkofproof{lem:iid-online-reduction-identity}
{
Since the gradients are iid, the conditional
means in \eqref{eq:online_reduced_value_game} satisfy
\(m_t(u)=\langle\mu_P,u\rangle\). Consequently, the corresponding term
divided by \(T\) is
\[
\begin{aligned}
&\mathbb E\sup_{u\in\X}
    \left\{\langle\mu_P,u\rangle-\max_{t\le T}\langle G_t,u\rangle\right\}
\circled{1}[=]
\mathbb E\sup_{u\in\X}\inf_{\lambda\in\Delta_T}
\left\langle \sum_{t=1}^T\lambda_tG_t-\mu_P,u\right\rangle
\\
&\quad\circled{2}[=]
\mathbb E\inf_{\lambda\in\Delta_T}\sup_{u\in\X}
\left\langle \sum_{t=1}^T\lambda_tG_t-\mu_P,u\right\rangle
\circled{3}[=]
\mathbb E\inf_{\lambda\in\Delta_T}
\left\|\sum_{t=1}^T\lambda_tG_t-\mu_P\right\|_{\Xpolar}
\\
&= \mathbb E\operatorname{dist}_{\Xpolar}
\bigl(\mu_P,\conv\{G_1,\ldots,G_T\}\bigr).
\end{aligned}
\]
Here \(\circled{1}\) renames \(u\) as \(-u\) (recall that $\X$ is symmetric) and writes the minimum as an
infimum over the simplex, \(\circled{2}\) is von Neumann's minimax theorem, and
\(\circled{3}\) is the definition of the polar norm.
}
\end{proof}

The following proof is essentially the same as the one for \cref{thm:fully-sequential-bound}, except that we exploit the iid case for obtaining bounds depending on $\fat$ instead of $\sfat$. We are using the same structure of proof for simplicity, although the usage of the class $\Phi$ could likely be avoided. 

\begin{proof}\linkofproof{thm:iid-centered-convex-hull}
{
The lower bound follows by taking the supremum over \(\alpha>0\) in
    \cref{prop:iid-lower-bound-fat}.
For the upper bound, we may focus on zero-mean distributions. Indeed, for an arbitrary
distribution \(P\) supported on \(\H\), convexity gives \(\mu_P\in\H\), and hence
\((G_i-\mu_P)/2\in\H\). The law of these centered random vectors has mean zero,
and
\[
    \operatorname{dist}_{\Xpolar}
    \left(\mu_P,\conv\{G_{1:T}\}\right)
    =
    2\operatorname{dist}_{\Xpolar}
    \left(0,\conv\{(G_i-\mu_P)/2:i\in[T]\}\right).
\]
Thus a uniform bound for mean-zero distributions implies the desired bound
after absorbing the factor of \(2\) into \(C\) and taking the supremum over
\(P\).

Now let \(\widetilde\H\defi\Lambda^{-1}\H\). Homogeneity gives
\[
\begin{aligned}
    \ConvDist[\X,\H,T]
    =
    \Lambda
    \ConvDist[\X,\widetilde\H,T],
    \qquad\text{ and }\qquad
    \fat[\beta](\cL[\X,\widetilde\H])
    =
    \fat[\Lambda\beta](\cL[\X,\H]).
\end{aligned}
\]
It therefore suffices to prove the claim when \(\Lambda=1\). Assume this for
the remainder of the argument. Accordingly, fix a distribution \(P\) supported
on \(\H\) with \(\mu_P=0\). By \cref{lem:iid-online-reduction-identity}, it
remains to prove the centered estimate. Using the linear class in
\cref{eq:linear_classes}, write
\[
    \Phi\circ\left(\tfrac12\cL[\X,\H]\right)
    \defi
    \left\{
        g\mapsto\phi\!\left(\frac{\ell(g)}{2}\right):
        \ell\in\cL[\X,\H],\ \phi\in\Phi
    \right\},
\]
and set \(\widehat P_Tf=T^{-1}\sum_{t=1}^T f(G_t)\). Fix
\(0<\alpha\le1\), let \(\eta=\max\{\alpha,T^{-1}\}\), and write
\(c_{\mathrm{off}}\) for the scale constant in the fixed-design chaining
bound. Then
\begin{align*}
\begin{aligned}
    &\mathbb E\operatorname{dist}_{\Xpolar}
    \bigl(0,\conv\{G_1,\ldots,G_T\}\bigr)
    \\
    &\quad\circled{1}[\le]
    2\mathbb E\sup_{f\in
    \Phi\circ(\tfrac12\cL[\X,\H])}
    \bigl(Pf-2\widehat P_Tf\bigr)
    \\
    &\quad\circled{2}[\le]
    C\eta+
    \frac{C}{T}\left[
        1+
        \int_\eta^1
        \log \Ninf
        \bigl(c_{\mathrm{off}}\beta,
        \Phi\circ(\tfrac12\cL[\X,\H]),2T\bigr)
        \,d\beta
    \right]
    \\
    &\quad\circled{3}[\le]
    C\eta+
    \frac{C}{T}\left[
        1+
        \int_\eta^1
        \log \Ninf
        \bigl(c_{\mathrm{off}}\beta/4,
        \cL[\X,\H],2T\bigr)
        \,d\beta
        +
        \frac{17\log5}{c_{\mathrm{off}}}
        \int_\eta^1\frac{d\beta}{\beta}
    \right]
    \\
    &\quad\circled{4}[\le]
    C\alpha+
    \frac{C}{T}\left[
        1+
        \int_\alpha^1
        \fat[c\beta](\cL[\X,\H])
        \log^\gamma\!\left(\frac{eT}{\beta}\right)
        \,d\beta
    \right].
\end{aligned}
\end{align*}
Here \(\circled{1}\) applies
\cref{lem:deterministic-domination-general} with \(b_t=0\) and \(P_t=P\).
The fixed-design chaining argument in the proof of \cref{prop:iid-offset},
before bounding covering numbers by fat-shattering dimension, gives
\(\circled{2}\). Note these are non-sequential covering numbers. For \(\circled{3}\), the construction in the proof of \cref{lem:affine-offset-free-entropy}, using ordinary fixed-design covers and
    setting the offset coordinate to zero, gives
\[
    \log \Ninf
    \bigl(\beta,\Phi\circ(\tfrac12\cL[\X,\H]),n\bigr)
    \le
    \log \Ninf
    \bigl(\beta/4,\cL[\X,\H],n\bigr)
    +\frac{17\log5}{\beta}.
\]
Finally, \(\circled{4}\) uses \citet[Theorem~12.8]{anthony1999neural}, which
bounds fixed-design \(\ell_\infty\) covering numbers by fat-shattering
dimension up to logarithmic factors, together with
\(\eta\le\alpha+T^{-1}\) and monotonicity of the integral. We may take
\(c\le1\) and \(\gamma\ge1\). Choose \(g_0\in\H\) and \(u_0\in\X\) that
attain \(\LambdaPair[\X,\H]=1\). By symmetry, \(g_0\) is
\(\beta\)-shattered by \(\cL[\X,\H]\) for
\(0<\beta\le2\). Thus, when \(\alpha\le1/2\), the integral over
\([1/2,1]\) absorbs the term \(T^{-1}\log(e/\eta)\) from the last integral in
\(\circled{3}\). When \(\alpha>1/2\), that term is absorbed by \(C\alpha\).
The deterministic and entropy arguments cited above use only convexity,
symmetry, and the normalized range bound, so their proofs apply verbatim to
\(\cL[\X,\H]\). Taking the infimum over \(\alpha\) proves the
normalized upper bound. Finally, multiply that bound by \(\Lambda\), set
\(\alpha'=\Lambda\alpha\) and \(\beta'=\Lambda\beta\), and use the two
homogeneity identities above. Dropping the primes gives the stated bound for
\((\X,\H)\).
}
\end{proof}

The following proposition is an analogous result to \cref{prop:sequential-offset} for the iid case, which is the only part where the analysis of the iid and the adaptive cases slightly differ, which makes $\fat$ appear in the former case as opposed to $\sfat$ in the latter.
\begin{proposition}[iid centered one-sided offset bound]
\label{prop:iid-offset}
Let \(\mathcal Z\subseteq\mathbb R^d\) be measurable, let \(P\) be a
probability distribution supported on \(\mathcal Z\) with mean \(0\), and let
\(G_1,\ldots,G_T\stackrel{\mathrm{iid}}{\sim}P\). Let \(\mathcal C\) be a
nonempty pointwise-measurable class of measurable functions
\(\mathcal Z\to[0,2]\), and write
\(
    Ph\defi\int h\,dP
\)
and
\(
    \widehat P_T h\defi T^{-1}\sum_{t=1}^T h(G_t)
\).
There are universal constants \(C,c,\gamma>0\) such that, for every
\(0<\delta\le1\),
\[
    \mathbb E\sup_{h\in\mathcal C}\bigl(Ph-2\widehat P_T h\bigr)
    \le
    C\delta+\frac{C}{T}\left[1+\int_\delta^1
    \fat[c\alpha](\mathcal C)\log^\gamma\!\left(\frac{eT}{\alpha}\right)
    \,d\alpha\right].
\]
\end{proposition}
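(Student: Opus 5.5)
The plan is to mirror the proof of \cref{prop:sequential-offset}, replacing sequential symmetrization on pair-trees by ordinary symmetrization with a ghost sample, so that only fixed-design covers of a $2T$-point set appear. This is what yields $\fat$ rather than $\sfat$.

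\textbf{Symmetrization.} Draw an independent ghost sample $G_1',\ldots,G_T'\stackrel{\mathrm{iid}}{\sim}P$. Since $Ph=\mathbb E[T^{-1}\sum_t h(G_t')]$, Jensen's inequality gives
\[
    \mathbb E\sup_{h\in\mathcal C}\bigl(Ph-2\widehat P_Th\bigr)\le \mathbb E\sup_{h\in\mathcal C}\frac1T\sum_{t=1}^T\bigl[h(G_t')-2h(G_t)\bigr].
\]
For each $t$, swapping $G_t$ and $G_t'$ leaves the joint law invariant. We may therefore insert iid Rademacher signs $\epsilon_t$ and write the right-hand side as $\mathbb E\,\mathbb E_\epsilon\sup_h T^{-1}\sum_t[h(Z_t^{-\epsilon_t})-2h(Z_t^{\epsilon_t})]$, where $Z_t^{+}=G_t$ and $Z_t^{-}=G_t'$. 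Conditionally on the $2T$ points, this is exactly the pair-tree process of \cref{lem:finite-class-offset}, except that the pair at time $t$ does not depend on $\epsilon_{<t}$. In other words, it is a fixed design.

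\textbf{Chaining.} Fix the $2T$ points and run the same multiscale argument as in \cref{prop:sequential-offset}. At scales $\alpha_s=2^{-s}\ge\delta$, take ordinary $\ell_\infty$ covers $\mathcal V_s$ of $\mathcal C$ restricted to these $2T$ points at accuracy $\alpha_s/2$, clipped to $[0,2]$. Form the monotone lower envelopes $c_{s,t}^\pm$ and the truncated increments $g_{s,t}^\pm\in[0,\alpha_{s-1}]$ as before. Then apply \cref{lem:finite-class-offset} level by level. That lemma holds in particular for trees that are constant across each level, and the one-sided factor $2$ produces the $\log N/T$ rate instead of $\sqrt{\log N/T}$. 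Summing over scales as in steps $\circled{3}$ and $\circled{4}$ gives
\[
    \mathbb E\sup_{h}\bigl(Ph-2\widehat P_Th\bigr)\le 4\delta+\frac{16}{T}+\frac{48}{T}\int_\delta^1\log\Ninf(\alpha/4,\mathcal C,2T)\,d\alpha,
\]
where $\Ninf$ denotes the worst-case covering number over $2T$-point designs.

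\textbf{From covering numbers to $\fat$.} To finish, invoke the fixed-design $\ell_\infty$ covering bound of \citet{alon1993scale}, in the form of \citet[Theorem~12.8]{anthony1999neural}, for $[0,2]$-valued classes:
\[
    \log\Ninf(\alpha/4,\mathcal C,2T)\le C\,\fat[c\alpha](\mathcal C)\log^2\!\left(\frac{eT}{\alpha}\right).
\]
Substituting this yields the claim with, for example, $\gamma=2$, after adjusting constants and accounting for the factor $2$ in $2T$ inside the logarithm.

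\textbf{Main obstacle.} The main point to verify is that the chaining in \cref{prop:sequential-offset} really only used pathwise covers along the realized signs. Once that is checked, it transfers directly to fixed designs, where a single cover works for all sign paths because the points do not depend on $\epsilon$. The mean-zero hypothesis is not actually needed for this bound. It is only used in the caller, \cref{thm:iid-centered-convex-hull}.
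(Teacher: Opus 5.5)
Your proposal is correct and follows essentially the same route as the paper. The paper also uses ghost-sample symmetrization with a random swap inside each pair. Conditionally on the $2T$ points, it then runs the dyadic lower-envelope chaining of \cref{prop:sequential-offset} with ordinary fixed-design covers and applies \cref{lem:finite-class-offset} scale by scale. Finally, it invokes Theorem~12.8 of \citet{anthony1999neural} to turn fixed-design covering numbers into $\fat$ up to polylogarithmic factors. Your side remarks are accurate: the mean-zero hypothesis is unused here, and because the points do not depend on the signs, a single fixed-design cover replaces the pathwise sequential covers.
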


\begin{proof}
For each \(t\), draw an independent pair
\(Z_t^+,Z_t^-\stackrel{\mathrm{iid}}{\sim}P\), and let
\(\epsilon_1,\ldots,\epsilon_T\) be independent Rademacher signs. Ghost-sample
symmetrization and a random swap within each pair give
\[
\begin{aligned}
    &\mathbb E
    \sup_{h\in\mathcal C}
    \sum_{t=1}^T\bigl(Ph-2h(G_t)\bigr)
    \le
    \mathbb E_{Z,\epsilon}
    \sup_{h\in\mathcal C}
    \sum_{t=1}^T
    \left[
        h(Z_t^{-\epsilon_t})
        -
        2h(Z_t^{\epsilon_t})
    \right].
\end{aligned}
\]
Condition on the \(2T\) points \(Z_{1:T}^+,Z_{1:T}^-\). Unlike
in the adaptive case, these points do not depend on the sign history, so the
dyadic lower-approximation argument in the proof of
\cref{prop:sequential-offset} uses ordinary fixed-design \(\ell_\infty\)
covers. That argument and \cref{lem:finite-class-offset} yield
\[
\begin{aligned}
    &\mathbb E_\epsilon
    \sup_{h\in\mathcal C}
    \frac1T\sum_{t=1}^T
    \left[
        h(Z_t^{-\epsilon_t})
        -
        2h(Z_t^{\epsilon_t})
    \right]
    \le
    C\delta
    +
    \frac{C}{T}
    \left[
        1
        +
        \int_\delta^1
        \log \Ninf(c\alpha,\mathcal C,Z_{1:2T})
        \,d\alpha
    \right].
\end{aligned}
\]
Applying \citet[Theorem~12.8]{anthony1999neural} with sample size \(2T\) and
range bound 2 gives, we have the following bound:
\[
    \log \Ninf(c\alpha,\mathcal C,2T)
    \le
    C\fat[c'\alpha](\mathcal C)
    \log^\gamma\!\left(\frac{eT}{\alpha}\right).
\]
Averaging over the pairs and renaming the universal scale constant proves the
claim.
\end{proof}

\begin{proposition}[iid convex-hull lower bounds]
\label{prop:iid-lower-bound-fat}
{
Let \(\X,\H\) be centrally symmetric convex bodies, and
let \(T\ge2\) and \(\alpha>0\). There is a distribution \(P\) over \(\H\) satisfying
\[
    \ConvDist[\X,\H,T]
    \ge
    \mathbb E_{G_{1:T}\stackrel{\mathrm{iid}}{\sim}P}
    \operatorname{dist}_{\Xpolar}
    \left(\mu_P,\conv\{G_{1:T}\}\right)
    \ge
    \frac{\alpha}{8}
    \left(
        \frac{\fat[\alpha](\cL[\X,\H])}{T}
        \wedge 1
    \right).
\]
}
\end{proposition}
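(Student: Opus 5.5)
The plan is to build a explicit hard distribution on a fat-shattered set, and read off the lower bound through the dual formula from \cref{lem:iid-online-reduction-identity}. That lemma gives
\[
\operatorname{dist}_{\Xpolar}\bigl(\mu_P,\conv\{G_{1:T}\}\bigr)=\sup_{w\in\X}\Bigl\{\langle\mu_P,w\rangle-\max_{t\le T}\langle G_t,w\rangle\Bigr\},
\]
using symmetry of $\X$. So it suffices to exhibit, for each realization, a single $w\in\X$ with a good margin. The first inequality of the statement is just the definition of $\ConvDist[\X,\H,T]$ as a supremum over $P$.

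\textbf{Step 1: remove the thresholds.} Let $m\defi\fat[\alpha](\cL[\X,\H])$, and assume $m\ge1$, since otherwise there is nothing to prove. Fix $g_1,\dots,g_m\in\H$ that are $\alpha$-shattered with thresholds $s_i$ and witnesses $u_\varepsilon$. For a sign pattern $\varepsilon$, set $w_\varepsilon\defi(u_\varepsilon-u_{-\varepsilon})/2$. This lies in $\X$ by symmetry and convexity. Subtracting the two shattering inequalities cancels $s_i$ and gives $\varepsilon_i\langle g_i,w_\varepsilon\rangle\ge\alpha/2$ for all $i$. We then have zero-threshold shattering at scale $\alpha/2$, with $|\langle g_i,w_\varepsilon\rangle|\le\Lambda$.

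\textbf{Step 2: the distribution.} Set $n\defi\min\{m,T\}$, restrict to $g_1,\dots,g_n$, and let $P$ draw an index $i$ uniformly from $[n]$ and output a signed copy of $g_i$. The signs should make the mean nonzero in the unseen directions but zero on average. My first choice is $G=\sigma g_i$ with $\sigma\in\{+1,-1\}$ equiprobable, so that $\mu_P=0$ and $\langle\mu_P,w\rangle=0$ for every $w$. Then every sampled point must satisfy $\langle G_t,w\rangle\le-\alpha/2$. For each index $i$ observed with only one sign $\sigma_i$, choose $\varepsilon_i=-\sigma_i$; unseen indices are free.

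The difficulty is that an index seen with both signs is a conflict, and with $n\approx T$ and $T$ draws, conflicts are frequent. I therefore expect to need an asymmetric per-index law in the spirit of the one-dimensional example in the introduction: $+g_i$ with probability $\tfrac1{k+1}$ and $-g_i/k$ with probability $\tfrac{k}{k+1}$, with $k\asymp T/n$. This keeps the mean zero and $G\in\H$ by convexity.

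Per index, a "$+$" appears with constant probability at most $1-c$. When it does not appear, choose $\varepsilon_i=+1$, which gives margin $\alpha/(2k)\asymp\alpha n/T$ on the $-g_i/k$ points. When it does appear, choose $\varepsilon_i=-1$, which handles the $+g_i$ point but then the $-g_i/k$ point has positive value up to $\Lambda/k$.

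\textbf{Step 3: resolve conflicts (the main obstacle).} Rather than requiring every index to be conflict-free, I would first try to lower bound the expectation by conditioning on the event that every observed index carries a single sign. Its probability must be shown to be a constant, for example at least $1/4$, which is where a factor such as $1/8$ would come from. With margin $\alpha n/(2T)$ on that event, this gives $\E\operatorname{dist}\ge\frac{\alpha}{8}\bigl(\frac{n}{T}\wedge1\bigr)$.

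If the constant-probability estimate fails for this law, the fallback is to increase $k$ so that "$+$" draws are rare overall, say with total probability $\lesssim1/T$ per draw. Then with constant probability no "$+$" occurs at all, every point is $-g_i/k$, and $w_{\mathbf 1}$ gives margin $\alpha/(2k)$ uniformly. The bookkeeping to check is that $k$ can be taken $\asymp T/n$ while keeping the mean exactly zero. I expect this balancing of the scale $k$ against the conflict probability to be the main place where care is needed.
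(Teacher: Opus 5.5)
Your duality reduction and the threshold removal in Step 1 are fine, but Step 3 fails, and so does the fallback.

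\textbf{Why Step 3 fails.} Under your per-index law with $k\asymp T/n$, a single draw equals $+g_i$ with probability $\frac{1}{n(k+1)}$ and $-g_i/k$ with probability $\frac{k}{n(k+1)}$. Since there are $T\ge n$ draws, each index is seen with both signs with probability bounded below by a constant. Whenever this happens, $\frac{1}{k+1}g_i+\frac{k}{k+1}\cdot(-g_i/k)=0=\mu_P$ lies in $\operatorname{conv}\{G_{1:T}\}$, so the distance is exactly zero. Hence the event you condition on has probability roughly $(1-c)^n$. That is exponentially small, not a constant.

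\textbf{Why the fallback fails.} The total per-draw probability of a ``$+$'' atom is $\frac{1}{k+1}$, regardless of $n$. Making such draws rare therefore forces $k\gtrsim T$. The margin $\alpha/(2k)$ then only gives $\mathbb E\operatorname{dist}\gtrsim\alpha/T$, a factor $n$ short of the target. Keeping the mean zero is not the issue, since it holds for every $k$; the requirements $k\asymp T/n$ and $k\gtrsim T$ are simply incompatible. The root cause is that every direction carries its own compensating atom along $-g_i$. One collision in any single direction then puts $\mu_P$ in the hull.

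\textbf{The missing idea.} Nothing forces $\mu_P=0$. The paper puts mass $\eta=1/(2T)$ on each of $g_1,\dots,g_n$ and the remaining mass, at least $1/2$, on $0\in\H$. Unseen directions still shift $\mu_P=\eta\sum_i g_i$, while the hull contains only $0$ and the seen $g_i$.

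Let $S$ be the set of seen indices and $M=n-|S|$. Take $u_S\in\X$ with $\langle u_S,g_i\rangle=0$ for $i\in S$ and $\langle u_S,g_i\rangle=\alpha/2$ for $i\notin S$. Every hull point pairs to $0$ with $u_S$, while $\langle u_S,\mu_P\rangle=\alpha\eta M/2$. Bernoulli's inequality gives $\mathbb E M=n(1-\eta)^T\ge n/2$, which yields exactly $\alpha n/(8T)$.

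This witness needs exact values on the seen coordinates, because those coordinates also enter $\mu_P$. The sign-pattern witnesses $w_\varepsilon$ from Step 1 only give inequalities, so they do not suffice. What you need is the cube containment of \cref{fact:fat-shattering-evaluation-cubes}, and it does follow from Step 1. The evaluation image of $\X$ is compact and convex. Its support function at $a$ is at least $\langle a,y_{\operatorname{sign}(a)}\rangle\ge\frac{\alpha}{2}\|a\|_1$, where $y_\varepsilon$ is the evaluation vector of $w_\varepsilon$. The right-hand side is the support function of $\frac{\alpha}{2}B_\infty^n$, so the image contains that cube.

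If you want a centered law, take the law of $(G-\mu_P)/2$ for this $P$. The compensating mass then sits on one shared atom $-\mu_P/2$ rather than on each direction, at the price of a factor $2$.
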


\begin{proof}
{
Set
\(
    m
    \defi
    \min\left\{
        \fat[\alpha](\cL[\X,\H]),T
    \right\}.
\)
If \(m=0\), take \(P\) to be the point mass at \(0\). Hence, suppose
\(m\ge1\). By
\cref{fact:fat-shattering-evaluation-cubes}, there are
\(g_1,\ldots,g_m\in\H\) whose evaluation image contains
\((\alpha/2)B_\infty^m\). Set \(\eta\defi(2T)^{-1}\), and let \(P\) assign
mass \(\eta\) to every \(g_i\) and the remaining mass to \(0\). This is a
probability distribution because \(m\le T\).

Given a sample from \(P\), let \(S\subseteq[m]\) contain the indices that
appear and set \(M\defi m-|S|\). The fact that the image contains \((\alpha/2)B_\infty^m\) gives \(u_S\in\X\) such that \(\langle u_S,g_i\rangle=0\) for \(i\in S\) and
\(\langle u_S,g_i\rangle=\alpha/2\) otherwise. Thus every point in the
sampled convex hull has pairing zero with \(u_S\), whereas
\[
    \left\langle u_S,\mu_P\right\rangle
    =
    \frac{\alpha\eta M}{2}.
\]
Consequently, the distance from \(\mu_P\) to the sampled convex hull is at
least \(\alpha\eta M/2\). Since
\(\mathbb E M=m(1-\eta)^T\ge m/2\), taking expectations gives
\[
    \mathbb E
    \operatorname{dist}_{\Xpolar}
    \left(\mu_P,\conv\{G_{1:T}\}\right)
    \ge
    \frac{\alpha m}{8T}
    =
    \frac{\alpha}{8}
    \left(
        \frac{\fat[\alpha](\cL[\X,\H])}{T}
        \wedge 1
    \right).
\]
The definition of
\(\ConvDist[\X,\H,T]\) now gives the final
claim.
}
\end{proof}

\begin{proof}\linkofproof{thm:coupon_lower_bound}
Fix \(p,q\in[1,\infty]\) and \(T\ge3\). After decreasing the universal constant, if necessary, it is
enough to consider large \(T\).  Let \(k\ge2\) be the largest integer satisfying
\(k\log k\le T\), and set \(d=T\).  Then \(k\le d\) and
\(k\asymp T/\log T\).  In the first \(k\) coordinates of \(\mathbb R^d\),
consider the centered simplex vertices
\[
    v_i \defi \frac12\left(e_i-\frac1k\mathbf 1\right),
\qquad i=1,\dots,k,
\]
where \(e_i\) is the \(i\)-th standard basis vector in \(\mathbb R^k\) and
\(\mathbf 1=(1,\ldots,1)\).  Let \(P\) be the uniform distribution on \(\{v_1,\ldots,v_k\}\). \(P\)
has mean \(0\), since
\(
    \frac1k\sum_{i=1}^k v_i
    =
    \frac12\left(\frac1k\mathbf 1-\frac1k\mathbf 1\right)
    =
    0.
\)
Moreover, the distribution \(P\) is supported on
\(B_{q^\ast}^d\), because
\(
    \|v_i\|_{q^\ast}
    \le
    \|v_i\|_1
    =
    1-\frac1k
    <1.
\)

Draw \(G_1,\ldots,G_T\stackrel{\mathrm{iid}}{\sim}P\), and write
\[
    S\defi \{i\in[k]: v_i \text{ appears among } G_1,\ldots,G_T\}.
\]
If some index \(j\) is missing, then every
\(y\in\conv\{v_i:i\in S\}\) has \(j\)-th coordinate
\(
    y_j=-\frac1{2k},
\)
since \((v_i)_j=-1/(2k)\) for all \(i\neq j\). Consequently, for every
\(p\in[1,\infty]\),
\[
    \operatorname{dist}_{p^\ast}\bigl(0,\conv\{G_1,\ldots,G_T\}\bigr)
    =
    \inf_{y\in\conv\{G_1,\ldots,G_T\}}\|y\|_{p^\ast}
    \ge
    \frac1{2k}
    \qquad\text{on the event } \{S\neq[k]\}.
\]

It remains to show that this event has probability bounded below by a universal
constant.  Let \(M\defi k-|S|\) be the number of missing support points.  If
\(X_i\) is the indicator that \(v_i\) is not sampled, then \(M=\sum_i X_i\),
and the standard second-moment computation gives
\[
    \mathbb E M
    =
    k\Big(1-\frac1k\Big)^T,
    \qquad
    \mathbb E M^2
    =
    \mathbb E M
    +
    k(k-1)\Big(1-\frac2k\Big)^T .
\]
The maximality of \(k\) gives \(k\log k\le T<(k+1)\log(k+1)\).  Hence
\((1-\frac1k)^T\asymp k^{-1}\) and
\((1-\frac2k)^T\lesssim k^{-2}\), with universal constants.  Therefore
there are universal constants \(0<c_1<c_2<\infty\) such that
\(
    c_1\le \mathbb E M\le c_2,
    \mathbb E M^2\le c_2,
\)
Combining the geometric observation above with the Paley-Zygmund inequality,
and using \(k\asymp T/\log T\), yields
\[
    \mathbb E\operatorname{dist}_{p^\ast}
    \bigl(0,\conv\{G_1,\ldots,G_T\}\bigr)
    \circled{1}[\geq]
    \frac{1}{2k}\mathbb P(M>0)
    \circled{2}[\ge]
    \frac{(\mathbb E M)^2}{2k\mathbb E M^2}
    \ge
    \frac{c_0}{k} \geq \frac{c\log T}{T}.
\]
Here \(\circled{1}\) is the geometric observation above and \(\circled{2}\)
is the Paley-Zygmund inequality. The constants \(c_0,c>0\) are universal.
\end{proof}

\begin{proof}\linkofproof{prop:iid-high-dimensional-instantiation}
Set
\[
    a\defi\left(\frac1q-\frac1p\right)_+,
    \qquad
    s\defi
    a+\frac1p-\left(\frac1q-\frac12\right)_+.
\]
Then \(\LambdaPair[p,q^\ast]=d^a\).
If \(p=q=\infty\), the endpoint ordinary fat-shattering profile,
\cref{thm:iid-centered-convex-hull,prop:iid-lower-bound-fat}, and the trivial
diameter bound give
\[
    \ConvDist[\infty,1,T]
    =
    \widetilde\Theta\left(\frac dT\wedge1\right),
\]
which proves both claims at this endpoint. Hence, assume
\((p,q)\ne(\infty,\infty)\), so \(s>0\).

First suppose \(d\ge T\). The ordinary fat-shattering bounds
\cref{eq:fat_shattering_values,eq:fat-shattering-11} give, uniformly in the
remaining \((p,q)\),
\[
    \fat[\beta](\cL[p,q^\ast][d])
    \lesssim_{p,q} L_\beta
    \left(\frac{\LambdaPair[p,q^\ast]}{\beta}\right)^{1/s}
    \qquad
    (0<\beta\le\LambdaPair[p,q^\ast]).
\]
For the upper bound, choose \(\alpha=\LambdaPair[p,q^\ast]T^{-s}\) in
\cref{thm:iid-centered-convex-hull}. Integrating the preceding estimate gives
(the exponent \(\gamma+2\) allows for \(L_{c\beta}\lesssim_{p,q}\log(eTd)\)
on this interval and the extra logarithm when \(s=1\))
\[
    \int_\alpha^{\LambdaPair[p,q^\ast]}
    \fat[c\beta](\cL[p,q^\ast][d])
    \log^\gamma\!\left(\frac{eT\LambdaPair[p,q^\ast]}{\beta}\right)
    \,d\beta
    \lesssim_{p,q}
    \LambdaPair[p,q^\ast]T^{1-s}\log^{\gamma+2}(eTd),
\]
and hence
\(
    \ConvDist[p,q^\ast,T]
    =
    \widetilde O_{p,q}(\LambdaPair[p,q^\ast]T^{-s})
\).
For the lower bound, choose
\(\alpha\asymp_{p,q}\LambdaPair[p,q^\ast]T^{-s}\) sufficiently small
that \cref{eq:fat_shattering_values,eq:fat-shattering-11} give
\(\fat[\alpha](\cL[p,q^\ast][d])\ge c_{p,q}T\), with \(0<c_{p,q}\le1\).
We only need a constant fraction of \(T\), including when \(d=T\).
Applying \cref{prop:iid-lower-bound-fat} therefore yields
\[
    \ConvDist[p,q^\ast,T]
    \ge \frac{\alpha}{8}
    \left(\frac{\fat[\alpha](\cL[p,q^\ast][d])}{T}\wedge1\right)
    \ge \frac{c_{p,q}\alpha}{8}
    \gtrsim_{p,q}\LambdaPair[p,q^\ast]T^{-s}.
\]
Finally,
\[
    \LambdaPair[p,q^\ast]T^{-s}
    =
    \left(\frac dT\right)^a
    \frac{1}{T^{\frac1p-(\frac1q-\frac12)_+}},
\]
which proves the first claim.

Suppose now that \(d\le T\).
Write \(\rho\defi\rhopqd[d]\), with \(\rhopqd\) as in
\cref{eq:rho-pqd}. We first prove
\[
    \ConvDist[p,q^\ast,T]
    =
    \widetilde{\Theta}_{p,q}\!\left(\frac{d\rho}{T}\right).
\]

For the upper bound, the ordinary fat-shattering estimates of
\citet{mendelson2004shattering} and
\citet{guzman2015information}, summarized in the related-work discussion
and supplemented by \eqref{eq:fat-shattering-11} at \((1,1)\),
give
\[
    \fat[\beta](\cL[p,q^\ast][d])
    \lesssim_{p,q}
    \min\left\{
        d,
        L_\beta\left(\frac{\LambdaPair[p,q^\ast]}{\beta}\right)^{1/s}
    \right\},
\]
and \(\rho=\LambdaPair[p,q^\ast]d^{-s}\). Apply
\cref{thm:iid-centered-convex-hull} with \(\alpha=\rho/T\). Splitting its
integral at \(\rho\) gives
\[
\begin{aligned}
    \int_\alpha^{\LambdaPair[p,q^\ast]}
    \fat[c\beta](\cL[p,q^\ast][d])
    \log^\gamma\!\left(\frac{eT\LambdaPair[p,q^\ast]}{\beta}\right)
    \,d\beta
    &\lesssim_{p,q}
    d\rho\,\log^{\gamma+2}(eTd).
\end{aligned}
\]
Indeed, on \([\alpha,\rho]\) use \(\fat[c\beta]\le d\), which contributes at
most \(d\rho\) times the logarithmic factor. On
\([\rho,\LambdaPair[p,q^\ast]]\), use
\(L_{c\beta}\lesssim_{p,q}\log(eTd)\). Integrating
\((\LambdaPair[p,q^\ast]/\beta)^{1/s}\) gives \(O_{p,q}(\LambdaPair[p,q^\ast][1/s]\rho^{1-1/s}) =  O_{p,q}(d\rho)\) when \(s<1\)
and one additional logarithm when \(s=1\). Since \(0<s\le1\) and
\(\rho=\LambdaPair[p,q^\ast]d^{-s}\), we also have
\(\LambdaPair[p,q^\ast]\le d\rho\), so the theorem yields the claimed upper
bound.

For the lower bound, for all sufficiently large \(d\), choose an
integer \(m\asymp_{p,q}d\) with \(m\le c_{p,q}d\) in
\cref{lem:evaluation_cubes}. Since \(\rhopqd[m]\) is nonincreasing in
\(m\), cf. \cref{eq:rho-pqd}, the resulting cube has radius
\(R\gtrsim_{p,q}\rhopqd[d]=\rho\). For the finitely many remaining
dimensions, the one-coordinate cube gives the same conclusion after
decreasing the \((p,q)\)-dependent constant. By
\cref{fact:fat-shattering-evaluation-cubes}, this gives
\(\fat[2R](\cL[p,q^\ast][d])\ge m\). Since \(m\le d\le T\),
\cref{prop:iid-lower-bound-fat} with \(\alpha=2R\) gives
\[
    \ConvDist[p,q^\ast,T]
    \ge
    \frac{\alpha}{8}
    \left(
        \frac{\fat[\alpha](\cL[p,q^\ast][d])}{T}
        \wedge 1
    \right)
    \ge
    \frac{Rm}{4T}
    \gtrsim_{p,q}
    \frac{d\rho}{T}.
\]
Finally, substituting \(m\asymp_{p,q}d\) into \cref{eq:rho-pqd} and using
\[
    \frac1q-\frac1{\max\{q,2\}}
    =
    \left(\frac1q-\frac12\right)_+
\]
gives the expression in the statement in both regimes.
\end{proof}

\section{Proof of the Sequential Fat-Shattering Bounds}\label{app:sfat-profile}

\begin{proof}\linkofproof{cor:explicit_regimes}
The \(p<q\) and \(p\ge q\) cases follow by combining the corresponding upper
bound in \cref{thm:norm-specific-upper-bound} and lower bound in
\cref{thm:norm-specific-lower-bound}, both proved below.
\end{proof}

Recall the linear class \(\cL[p,q^\ast][d]\) in
\cref{eq:linear_classes}. %
For \(1\le m\le d\), define
\begin{equation}\label{eq:rho-pqd}
\newtarget{def:rho-profile}{\rhopqd[m]}\defi
\begin{cases}
m^{-(1/p-(1/q-1/2)_+)},
& p < q,\\[1mm]
d^{1/q-1/p}m^{-1/\max\{q,2\}},
& p\ge q.
\end{cases}
\end{equation}
Set
\(
L_\alpha\defi 1+\log_+\frac{ed}{\alpha}.
\)

The proof of the upper bound consists of several parts. First, we recall that uniform convexity controls sfat for uniformly convex balls. For balls like $B_{q^\ast}^d$ that do not induced a uniformly convex norm, we cover them with other balls and compute sfat with \cref{lem:seq_union_bound} that allows to compute sfat for a union of sets of functions. Trading off radius of the covering balls vs their sfat, we can compute upper bounds that are near optimal. 

For the lower bound, a classical technique for fat shattering is amplified by sequential binary search. The latter
turns each of \(m\) ordinary shattering directions into
\(\log(1+\rhopqd[m]/\alpha)\) adaptive levels and explains the logarithmic growth after the ordinary dimension has saturated at \(d\).

\subsection{Upper bound}

We first introduce a few classical notions.

\begin{definition}[Uniform convexity and function range]
\label{def:uniform-convexity-and-range}
Let $(E,\|\cdot\|)$ be a normed space, and  \(K\subseteq E\) be convex. %
For \(c>0\) and \(\kappa\ge2\), a function
\(\Phi:K\to\mathbb R\) is \emph{\((c,\kappa)\)-uniformly convex on \(K\)}
with respect to \(\|\cdot\|\) if
\[
    \frac{\Phi(u)+\Phi(v)}2
    \ge
    \Phi\!\left(\frac{u+v}{2}\right)
    +c\|u-v\|^\kappa,
    \qquad u,v\in K.
\]
Its range on \(K\) is
\(
    \operatorname{range}_K(\Phi)
    \defi
    \sup_{u\in K}\Phi(u)-\inf_{u\in K}\Phi(u).
\)
\end{definition}

The next lemma says $\sfat$ can be bounded in terms of the constants of a uniformly convex potential in our set, if it exists. We note below the lemma, that this result is essentially known, since it is a consequence of concatenating two known results. For convenience, however, we instead provide a direct simple proof of the lemma in \cref{app:sfat-profile-known-proofs}.

\begin{lemma}[Uniformly convex localization]
\label{lem:uniformly-convex-localization}\linktoproof{lem:uniformly-convex-localization}
Let \(\|\cdot\|\) and suppose \(\|g\|_*\le L\) for every
\(g\in\H\).
Let \(K\subseteq\mathbb R^d\) be convex, and let \(\Phi:K\to\mathbb R\) be
\((c,\kappa)\)-uniformly convex on \(K\) with respect to \(\|\cdot\|\) and range \(D\). Then
\[
    \sfat[\alpha](\cL[K,\H])
    \le
    \frac{D}{c}\left(\frac{L}{\alpha}\right)^\kappa .
\]
\end{lemma}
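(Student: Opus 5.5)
We prove the bound directly by a potential argument along a single root-to-leaf path of a shattered tree. Suppose a depth-$n$ predictable tree $(g_t)$ over $\H$, with thresholds $(s_t)$, is sequentially $\alpha$-shattered by $\cL[K,\H]$. The plan is to build, adaptively, a path $\varepsilon$ together with a nested sequence of convex sets
\[
K_t\defi\{u\in K:\ \varepsilon_i(\langle g_i,u\rangle-s_i)\ge\alpha/2,\ i\le t\}.
\]
By shattering, each $K_t$ is nonempty, and it is convex as an intersection of $K$ with halfspaces. We track the potential $\Psi_t\defi\inf_{u\in K_t}\Phi(u)$. It is nondecreasing in $t$, and it lies in an interval of length at most $D$, since $\Psi_0\ge\inf_K\Phi$ and $\Psi_n\le\sup_K\Phi$.

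\textbf{Per-step increment.} At step $t$, with $K_{t-1}$ fixed, consider the two children $K^{\pm}=\{u\in K_{t-1}:\pm(\langle g_t,u\rangle-s_t)\ge\alpha/2\}$. Both are nonempty by shattering. Take $u^\pm\in K^\pm$ that are $\eta$-optimal for $\inf_{K^\pm}\Phi$. Then $\langle g_t,u^+-u^-\rangle\ge\alpha$, so
\[
\|u^+-u^-\|\ge\alpha/\|g_t\|_*\ge\alpha/L.
\]
Since $(u^++u^-)/2\in K_{t-1}$ by convexity, uniform convexity gives
\[
\tfrac12\bigl(\Phi(u^+)+\Phi(u^-)\bigr)\ge\Psi_{t-1}+c(\alpha/L)^\kappa.
\]
Hence the child with the larger value of $\inf_{K^\pm}\Phi$ satisfies $\inf_{K^\pm}\Phi\ge\Psi_{t-1}+c(\alpha/L)^\kappa-\eta$. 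We choose $\varepsilon_t$ to be that child. The choice is legitimate because the tree and the thresholds at the next node depend only on $\varepsilon_{<t+1}$, and the path is ours to select.

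\textbf{Conclusion.} Summing the increments over the $n$ steps gives
\[
D\ge\Psi_n-\Psi_0\ge n\bigl(c(\alpha/L)^\kappa-\eta\bigr).
\]
Letting $\eta\downarrow0$ yields $n\le (D/c)(L/\alpha)^\kappa$, which is the claimed bound on $\sfat[\alpha](\cL[K,\H])$.

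\textbf{Main obstacle.} The delicate point is the bookkeeping of the potential at the endpoints. The minimum over $K_0=K$ may not be attained, and each $K_t$ may be non-closed or unbounded in degenerate cases. The $\eta$-slack handles the first issue, and finiteness of the range $D$ handles the second, so no compactness is needed. A second subtlety is that the midpoint must lie in $K_{t-1}$ and not merely in $K$, because the increment is measured against $\Psi_{t-1}$. This is why we work with the full history constraints $K_{t-1}$ rather than only the constraint from the current node.
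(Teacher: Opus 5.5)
Your proof is correct, but it takes a different route from the paper's. The paper fixes a witness $w_\varepsilon\in K$ at every leaf of the shattered tree and lets $m_u$ be the average of the witnesses below node $u$. This gives $m_u=(m_{u+}+m_{u-})/2$ exactly, and averaging the shattering inequalities over the two subtrees gives $\langle m_{u+}-m_{u-},g_u\rangle\ge\alpha$. It then applies uniform convexity to the level averages $\overline\Phi_t=2^{-t}\sum_{|u|=t}\Phi(m_u)$, which gain at least $c(\alpha/L)^\kappa$ per level, and telescopes. So the paper averages over all root-to-leaf paths (in effect, an expectation over a uniformly random path), evaluating $\Phi$ at explicit points of $K$. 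You instead follow a single worst-case path and use the version-space potential $\inf_{K_t}\Phi$.

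The paper's bookkeeping is slightly leaner. It needs no infima and no $\eta$-near-minimizers, and $\overline\Phi_n-\overline\Phi_0\le D$ holds trivially because both are averages of values of $\Phi$ on $K$. Your version has the advantage of being constructive in the online-learning sense. A learner that predicts the child with the smaller value of $\inf_{K^\pm}\Phi$ raises $\inf_{K_t}\Phi$ by at least $c(\alpha/L)^\kappa$ on every mistake. This gives an explicit deterministic mistake bound of $(D/c)(L/\alpha)^\kappa$ on $\alpha$-realizable sequences, which fits naturally with the rank-algorithm reasoning in the sequential union bound. Both arguments use linearity of $u\mapsto\langle g,u\rangle$ in the same essential way: the paper to average constraints over subtrees, you to keep each $K_t$ convex as an intersection of $K$ with halfspaces.
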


For \(1<r<\infty\), the classical uniform-convexity inequalities for
\(\ell_r\) show that \(RB_r^d\) admits a potential as in
\cref{lem:uniformly-convex-localization} with
\(\kappa=\max\{2,r\}\) and \(D/c\lesssim_r R^{\max\{2,r\}}\). 

The result in \cref{lem:uniformly-convex-localization} directly follows by concatenating the sequential Rademacher comparison and uniformly convex potential bound of \citet[Lemma~8 and Proposition~16]{rakhlin2015martingale}. Note that in their notation,
\(\fat[\alpha]\) denotes the sequential dimension: Lemma~8 shows that
\(\sfat[\alpha](\mathcal G)\ge n\) implies
\(\alpha\le2\mathfrak R_n(\mathcal G)\), and Proposition~16 bounds this
sequential Rademacher complexity $\mathfrak R_n$ using a uniformly convex potential. For convenience, we instead provide a direct simple proof of the lemma in \cref{app:sfat-profile-known-proofs}.

We next combine localized classes. The statement is the real-valued,
scale-sensitive analogue of the multiple-union bound for Littlestone
dimension in \citet[Appendix~A.1, Proposition~23]{pmlr-v125-alon20a}. We
retain the short multiplicative-weights proof because it also fixes the
margin and tree conventions used below.

\begin{lemma}[Sequential union bound]\label{lem:seq_union_bound}\linktoproof{lem:seq_union_bound}
Let \(\mathcal G_1,\ldots,\mathcal G_N\) be real-valued function classes on
the same domain, and set
\(
\mathcal G\defi \bigcup_{i=1}^N \mathcal G_i .
\)
If
\(
\sfat[\alpha](\mathcal G_i)\le m
\text{ for every }i\in[N],
\)
then
\(
\sfat[\alpha](\mathcal G)
\le
C\bigl(m+\log N\bigr),
\)
for a universal constant \(C\).
\end{lemma}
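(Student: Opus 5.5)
We would prove the sequential union bound via an online-learning mistake-bound argument, with a multiplicative-weights (halving-type) aggregation over the $N$ classes. Fix a depth-$n$ predictable tree $\mathbf z$ with threshold tree $\mathbf s$ that is sequentially $\alpha$-shattered by $\mathcal G$. The goal is to show $n\le C(m+\log N)$.

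The first step is a per-class online learner. For each $i$, we use that $\sfat[\alpha](\mathcal G_i)\le m$ to build a deterministic ``threshold predictor'' $\mathcal A_i$. At each node it predicts a sign $\hat y_t^{(i)}\in\{\pm1\}$ for whether the target value lies above or below $s_t(\varepsilon_{<t})+$ margin. It does so as a standard optimal algorithm (SOA): restrict to the version space
\[
V_t^{(i)}=\{h\in\mathcal G_i:\varepsilon_\tau(h(z_\tau)-s_\tau)\ge\alpha/2\ \text{for all }\tau<t\},
\]
and predict the sign whose restricted version space has the larger sequential fat-shattering dimension (at scale $\alpha$, with the fixed threshold $s_t$). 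The standard Littlestone argument then applies: if both restrictions had sfat equal to that of $V_t^{(i)}$, then gluing the two shattered subtrees under the current node would produce a larger shattered tree. So on every mistake the sfat of the version space drops by at least one. Hence, along any path $\varepsilon$ that is realized by some $h\in\mathcal G_i$ with margin $\alpha/2$ at all levels, $\mathcal A_i$ makes at most $m$ mistakes.

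The second step aggregates the $N$ experts by weighted majority with weights halved on each mistake, predicting $\hat y_t$ as the weighted-majority sign. Along any path, whenever the aggregate errs, the total weight shrinks by a factor at most $3/4$. Meanwhile the expert $i^\ast$ whose class contains the witness $h_\varepsilon$ keeps weight at least $2^{-m}$. This gives a mistake count of at most $(m+\log_2 N)/\log_2(4/3)$.

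The third step is the adversary argument. Walk down the shattered tree choosing $\varepsilon_t\defi-\hat y_t$ at every level. Since the tree is shattered, the resulting full path has a witness $h_\varepsilon\in\mathcal G$, lying in some $\mathcal G_{i^\ast}$, that realizes all the margins. So the mistake bound applies, yet the aggregate errs at all $n$ levels. Hence $n\le C(m+\log N)$ with $C=1/\log_2(4/3)$.

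The main obstacle is the first step. We must make the SOA argument rigorous for real-valued margin shattering with tree-dependent thresholds, in particular that the sfat of the restricted version space is defined relative to subtrees of the given $\mathbf z,\mathbf s$. Making this work requires the experts to use the same $\mathbf z$ and $\mathbf s$ as the shattered tree. We would handle it by defining $\mathcal A_i$ to predict the child of the current node (in the fixed tree) whose subtree is shattered by a version space of larger depth. The gluing contradiction is then exactly the definition of sequential shattering at that node.
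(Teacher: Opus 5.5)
Your proposal is correct and follows the paper's proof essentially step for step. Each class gets a rank-based (SOA-type) learner that makes at most $m$ mistakes on realizable paths, by the gluing argument. The $N$ learners are aggregated by weighted majority with halving, comparing the $3/4$ shrinkage per master mistake against the surviving weight $2^{-m}$. The adversary then walks down the shattered tree against the master's predictions.

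The obstacle you flag does not actually arise. As in the paper, you can take the rank to be the global sequential fat-shattering dimension of the version space. Gluing two arbitrary shattered trees of the children under the node $(x,s)$ already yields the contradiction, so you do not need to restrict to subtrees of the fixed tree, although your local variant also works.
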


\begin{proof}\linkofproof{lem:seq_union_bound}
For a subset of functions \(\mathcal V\subseteq\mathcal G_i\), write
\(\mathrm{rk}(\mathcal V)\defi\sfat[\alpha](\mathcal V)\), with
\(\mathrm{rk}(\varnothing)=-1\). We will have an expert predicting on each subset, and once the subset is empty the corresponding expert
predicts arbitrarily. At a node with instance-threshold pair \((x,s)\), define
\[
    \mathcal V_+
    \defi
    \{f\in\mathcal V:f(x)\ge s+\alpha/2\},
    \qquad
    \mathcal V_-
    \defi
    \{f\in\mathcal V:f(x)\le s-\alpha/2\}.
\]
If \(\mathrm{rk}(\mathcal V)=R\), the two children cannot both have rank at
least \(R\), since otherwise attaching their depth-\(R\) shattered trees below \((x,s)\)
would give a depth-\((R+1)\) tree shattered by \(\mathcal V\). Hence
\[
    \min\{\mathrm{rk}(\mathcal V_+),\mathrm{rk}(\mathcal V_-)\}
    \le R-1.
\]
The rank algorithm predicts the sign of the child with larger rank, breaking
ties arbitrarily, and updates to the child selected by the observed sign. On
an \(\alpha\)-realizable path, every mistaken prediction therefore decreases
the rank by at least one, and thus the rank algorithm for \(\mathcal G_i\) makes at most
\(\sfat[\alpha](\mathcal G_i)\le m\) mistakes.

Now run the \(N\) rank algorithms as experts. Give every expert initial
weight \(1\), predict by weighted majority, and after each round multiply the
weight of each mistaken expert by \(1/2\). If the master makes a mistake, then
at least half of the current total weight is on mistaken experts, and hence
the total weight $Z_t$ is multiplied by at most \(3/4\). After \(M\) master
    mistakes, we have $\circled{2}$ below, where $\circled{1}$ holds if the sequence is \(\alpha\)-realizable by \(\mathcal G_i\), since expert \(i\)
makes at most \(m\) mistakes and thus its final weight is at least \(2^{-m}\): 
\[
2^{-m}
\circled{1}[\le]
Z_T
\circled{2}[\le]
    N\left(\frac34\right)^M, \qquad \text{ and hence} \qquad
M
\le
\frac{m+\log_2 N}{-\log_2(3/4)}
\le
4(m+\log_2 N).
\]

Finally, let \(n\) be the depth of an arbitrary tree shattered by
\(\mathcal G\), and run the \(N\) rank algorithms and their weighted-majority
master at its nodes.
The two edges below each node represent the two possible signs of the margin
constraint at that node. After the master predicts one sign, the adversary selects the edge
with the other sign. Repeating this rule constructs a root-to-leaf path on
which the master makes exactly \(n\) mistakes. Since the tree is shattered,
    this complete path is witnessed by some function \(f\in\mathcal G\), and
the union representation \(\mathcal G=\bigcup_{i=1}^N\mathcal G_i\) places
that function in some \(\mathcal G_i\). The mistake bound above therefore
applies to the constructed path and yields
\(n\le4(m+\log_2N)\), proving the lemma.
\end{proof}

Covering the parameter body by local balls now costs only the logarithm of the covering number. This is the key argument for computing near optimal upper bounds on $\sfat[\alpha](\cL[p,q^\ast][d])$, as we simply take the infimum over $R$ over covering a ball with another type of ball of radius $R$, the latter coming from a uniformly convex geometry.

\begin{corollary}[Uniform convexity vs covering]\label{cor:barycentric_covering_bound}\linktoproof{cor:barycentric_covering_bound}
Let \(1<r<\infty\) and suppose
\(B_{q^{\ast}}^d\subseteq L B_{r^{\ast}}^d\). Then
\[
\sfat[\alpha](\cL[p,q^\ast][d])
\lesssim_r
\inf_{R>0}
\left[
\left(\frac{LR}{\alpha}\right)^{\max\{2,r\}}
+
\log N(B_p^d,RB_r^d)
\right].
\]
\end{corollary}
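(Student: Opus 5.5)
The plan is to fix \(R>0\), cover the feasible ball \(B_p^d\) by \(N\defi N(B_p^d,RB_r^d)\) translates \(x_i+RB_r^d\) with centers \(x_i\in B_p^d\), and write the linear class as a union of localized classes. Since \(B_p^d\subseteq\bigcup_{i=1}^N(x_i+RB_r^d)\), every function \(g\mapsto\langle g,u\rangle\) with \(u\in B_p^d\) lies in some class
\[
    \mathcal G_i\defi\left\{g\mapsto\langle g,x_i\rangle+\langle g,v\rangle: v\in RB_r^d\right\}.
\]
Hence \(\cL[p,q^\ast][d]\subseteq\bigcup_{i=1}^N\mathcal G_i\). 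Because \(\sfat\) is monotone under inclusion of classes, \cref{lem:seq_union_bound} gives
\[
    \sfat[\alpha](\cL[p,q^\ast][d])\le C\Bigl(\max_i\sfat[\alpha](\mathcal G_i)+\log N\Bigr).
\]
The remaining task is to bound each \(\sfat[\alpha](\mathcal G_i)\).

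Each \(\mathcal G_i\) differs from \(\cL[RB_r^d,\H]\), with \(\H=B_{q^\ast}^d\), by the fixed function \(g\mapsto\langle g,x_i\rangle\). Adding a fixed function to every member of a class leaves sequential fat-shattering unchanged: subtract \(\langle g_t(\varepsilon_{<t}),x_i\rangle\) from the threshold tree \(s_t(\varepsilon_{<t})\), which keeps it predictable. So \(\sfat[\alpha](\mathcal G_i)=\sfat[\alpha](\cL[RB_r^d,B_{q^\ast}^d])\).

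To bound this quantity, apply \cref{lem:uniformly-convex-localization} with norm \(\|\cdot\|_r\), whose dual norm is \(\|\cdot\|_{r^\ast}\). The hypothesis \(B_{q^\ast}^d\subseteq LB_{r^\ast}^d\) gives \(\|g\|_{r^\ast}\le L\) for every \(g\in\H\). For the potential, use the remark after that lemma: for \(1<r<\infty\), the ball \(RB_r^d\) carries a \((c,\kappa)\)-uniformly convex potential with \(\kappa=\max\{2,r\}\) and \(D/c\lesssim_r R^{\max\{2,r\}}\). The natural choice is \(\Phi(u)=\|u\|_r^2\) for \(r\le2\), using the \((r-1)\)-strong convexity of \(\frac12\|\cdot\|_r^2\), and \(\Phi(u)=\|u\|_r^r\) for \(r\ge2\), using Clarkson's inequality. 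Homogeneity then gives the \(R^{\kappa}\) scaling of \(D/c\). The lemma yields
\[
    \sfat[\alpha]\le\frac{D}{c}\Bigl(\frac{L}{\alpha}\Bigr)^{\kappa}\lesssim_r\Bigl(\frac{LR}{\alpha}\Bigr)^{\max\{2,r\}}.
\]

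Combining the two steps and taking the infimum over \(R>0\) proves the corollary. I expect no real obstacle. The only points needing care are:
1. the translation-invariance of \(\sfat\), which requires checking that the shifted threshold tree is still predictable;
2. the constants in the uniform-convexity potential, since \((c,\kappa)\) must depend only on \(r\) after rescaling by \(R\).

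Both are routine.
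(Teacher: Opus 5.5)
Your proposal is correct and follows the paper's proof essentially verbatim: cover \(B_p^d\) by \(N(B_p^d,RB_r^d)\) translates of \(RB_r^d\), note that translating the parameter set leaves the sequential fat-shattering dimension unchanged, bound each local class with the uniformly convex localization lemma, and combine the pieces with the sequential union bound. The differences are cosmetic: the paper intersects each cell with \(B_p^d\) before invoking monotonicity, while you spell out the potentials (\(\|u\|_r^2\) for \(r\le 2\), and \(\|u\|_r^r\) via Clarkson for \(r\ge 2\)) and the threshold shift, both of which the paper leaves implicit.
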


\begin{proof}\linkofproof{cor:barycentric_covering_bound}
Fix \(R>0\), and cover \(B_p^d\) by
\[
B_p^d\subseteq \bigcup_{i=1}^N(a_i+RB_r^d),
\qquad
N=N(B_p^d,RB_r^d).
\]
Set
\[
C_i\defi B_p^d\cap(a_i+RB_r^d),
\qquad
\mathcal F_i\defi
\{B_{q^\ast}^d\ni g\mapsto \langle w,g\rangle:w\in C_i\}.
\]
Since \(C_i\subseteq a_i+RB_r^d\), the class \(\mathcal F_i\) is contained in
the corresponding local ball class. Moreover, shifting the ball by $a_i$ does not change $\sfat$. 
Thus, using
\(
\cL[p,q^\ast][d]
\subseteq
\bigcup_{i=1}^N \mathcal F_i,
\)
\cref{lem:uniformly-convex-localization} and \cref{lem:seq_union_bound}, we have
\[
\sfat[\alpha](\cL[p,q^\ast][d])
\lesssim_r
\left(\frac{LR}{\alpha}\right)^{\max\{2,r\}}
    +\log N(B_p^d,RB_r^d) \quad \text{ for all } R > 0.
\]
\end{proof}

The next estimate is classical. The first one
is the packing-volume bound for finite-dimensional norm balls
\citep[Chapter~5]{pisier1989volume} and the second is the intermediate entropy regime of \citet{schutt1984entropy}. An explicit modern formulation is \citet[Theorem~2(a)]{kossaczka2020entropy}.

\begin{fact}[Covering numbers]
\label{prop:entropy-inputs}
For \(1\le p,s\le\infty\) and \(R>0\),
\begin{align}
    \log N(B_p^d,RB_s^d)
    &\lesssim_{p,s}
    d\left(1+\log_+\frac{d^{1/s-1/p}}R\right),
    \label{eq:volumetric-entropy-input}
    \\
    \log N(B_p^d,RB_s^d)
    &\lesssim_{p,s}
    R^{-1/(1/p-1/s)}\log\frac{Ced}{R},
    \qquad 1\le p<s\le2,\quad 0<R<1.
    \label{eq:schutt-entropy-input}
\end{align}
\end{fact}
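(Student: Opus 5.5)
This Fact collects two classical entropy estimates, so the plan is to derive each from a standard tool rather than build new machinery.

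\textbf{Volumetric bound \eqref{eq:volumetric-entropy-input}.} I would use the packing argument of \citet[Chapter~5]{pisier1989volume}. Take a maximal $R$-separated subset of $B_p^d$ in $\|\cdot\|_s$; it is automatically an $R$-cover. The balls of radius $R/2$ in the $s$-norm centered at its points are disjoint and lie in $B_p^d+(R/2)B_s^d$. Comparing volumes gives
\[
N(B_p^d,RB_s^d)\le\frac{\mathrm{vol}(B_p^d+(R/2)B_s^d)}{\mathrm{vol}((R/2)B_s^d)}.
\]
I then split into two regimes.
\begin{itemize}
\item If $R\ge d^{1/s-1/p}$, the sum is controlled by $\tfrac{3}{2}R B_s^d$ up to constants. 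Indeed $B_p^d\subseteq d^{(1/s-1/p)_+}B_s^d$, and when $p\ge s$ the stronger inclusion $B_p^d\subseteq d^{1/s-1/p}B_s^d$ holds. The ratio is then $C^d$, so $\log N\lesssim d$.
\item If $R< d^{1/s-1/p}$, I bound the numerator by $\mathrm{vol}(2B_p^d)$ when $R\le 1$ (roughly). Using $\mathrm{vol}(B_r^d)^{1/d}\asymp_r d^{-1/r}$ gives a ratio of $(C d^{1/s-1/p}/R)^d$, hence $\log N\lesssim d(1+\log(d^{1/s-1/p}/R))$.
\end{itemize}
A cleaner way to organize this is to write $B_p^d+(R/2)B_s^d\subseteq (\rho+R/2)B_s^d$ with $\rho\defi\sup_{x\in B_p^d}\|x\|_s$. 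When $\rho\le R$ this is immediate. When $\rho>R$, I use $\mathrm{vol}(B_p^d+(R/2)B_s^d)\le \mathrm{vol}(2\max\{1,\cdot\}\,B_p^d)$, which is valid once $R/2\,B_s^d\subseteq B_p^d$, or else covering is trivial, together with the volume-ratio asymptotics. The constants depend only on $p,s$.

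\textbf{Schütt regime \eqref{eq:schutt-entropy-input}.} Here I would cite \citet{schutt1984entropy} in the form of \citet[Theorem~2(a)]{kossaczka2020entropy}, but sketch the sparse-vector (Maurey-type) proof. Set $k\defi\lceil R^{-1/(1/p-1/s)}\rceil$. For $x\in B_p^d$, let $S$ be the indices of its $k$ largest coordinates. The tail satisfies $\|x_{S^c}\|_s\le\|x\|_p\,k^{-(1/p-1/s)}\le R$, by the standard inequality $\|x_{S^c}\|_s^s\le \|x_{S^c}\|_\infty^{s-p}\|x\|_p^p$ with $\|x_{S^c}\|_\infty\le k^{-1/p}$. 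So it suffices to cover the $k$-sparse part $\{x_S:|S|=k\}\cap B_p^d$ at scale $R$ in $\|\cdot\|_s$. There are $\binom dk\le (ed/k)^k$ supports, and each one is covered by the volumetric bound in dimension $k$, costing $k(1+\log_+(k^{1/s-1/p}/R))=O(k)$ because $k^{1/s-1/p}\asymp R$. The total is
\[
\log N\lesssim k\log(ed/k)\lesssim R^{-1/(1/p-1/s)}\log(Ced/R),
\]
where the last step uses $k\ge1$ and $\log(ed/k)\le\log(ed)\lesssim \log(Ced/R)$ since $R<1$. The covering radius doubles, from $R$ for the tail to $2R$ overall, and this is absorbed by rescaling $R$.

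\textbf{Main obstacle.} The only delicate point is the first bound in the regime $p<s$ with $R$ between $d^{1/s-1/p}$ and $1$. There one must check that the volumetric ratio really yields the stated $\log_+$ form with $(p,s)$-dependent constants. This rests on the Stirling asymptotics $\mathrm{vol}(B_r^d)^{1/d}\asymp_r d^{-1/r}$, which I would verify first.
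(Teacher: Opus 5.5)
Your plan is the route the paper relies on. The paper gives no argument of its own here: it cites Pisier's volumetric bound for \eqref{eq:volumetric-entropy-input} and Sch\"utt's estimate, in the sparse-vector form of Kossaczk\'a and Vyb\'{\i}ral, for \eqref{eq:schutt-entropy-input}. Your sketch of the second bound is sound. Truncating to the top $k\asymp_{p,s}R^{-1/(1/p-1/s)}$ coordinates, taking a union over $\binom dk$ supports, and covering each $B_p^k$ at scale $R$ at cost $O(k)$ gives $k\log(ed/k)$, which is slightly sharper than claimed. You should add the trivial case $k>d$, where you take $S=[d]$ and use $d<k$ and $d^{1/s-1/p}\le 1$.

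The case split for \eqref{eq:volumetric-entropy-input}, however, contains two false steps.
\begin{itemize}
\item In your first bullet, take $p<s$ and $d^{1/s-1/p}\le R<1$. Then $B_p^d+(R/2)B_s^d$ is not contained in any constant multiple of $RB_s^d$, since it contains $e_1$ with $\|e_1\|_s=1\gg R$. The inclusion $B_p^d\subseteq B_s^d$ only yields $N\le(1+2/R)^d$. At $R=d^{1/s-1/p}$ this gives $\log N\lesssim d\log d$, not the claimed $O(d)$.
\item In your ``cleaner'' version, the fallback ``valid once $(R/2)B_s^d\subseteq B_p^d$, or else covering is trivial'' is wrong whenever $R<\rho$ but that inclusion fails. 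This happens for $p>s$ with $2<R<d^{1/s-1/p}$, and for $p<s$ with $2d^{1/s-1/p}<R<1$. There $N\ge 2$: by symmetry and convexity, a single translate of $RB_s^d$ covering $B_p^d$ would force $B_p^d\subseteq RB_s^d$.
\end{itemize}

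Both holes close in one line each. The regime you single out as the main obstacle is settled by monotonicity, not by a finer volume computation.
\begin{itemize}
\item For $p\ge s$, the inclusion $B_p^d\subseteq d^{1/s-1/p}B_s^d$ alone gives $N\le(1+2d^{1/s-1/p}/R)^d$ for every $R>0$, which is exactly the claim.
\item For $p<s$, the comparison $(R/2)B_s^d\subseteq\frac12 B_p^d$ holds for all $R\le d^{1/s-1/p}$. Together with $\mathrm{vol}(B_r^d)^{1/d}\asymp_r d^{-1/r}$, it gives the bound in that range.
\item For $p<s$ and $R\ge d^{1/s-1/p}$, use that $N(B_p^d,RB_s^d)$ is nonincreasing in $R$ and evaluate at $R=d^{1/s-1/p}$.
\end{itemize}

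This matters for your second part as well. Each per-support cover there is taken at scale $R\asymp k^{1/s-1/p}$, which is exactly the boundary regime your first bullet mishandles. Since $k\le 2R^{-1/(1/p-1/s)}$ gives $R/2\le k^{1/s-1/p}$, the corrected comparison applies directly there.
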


\begin{theorem}[sfat upper bound]
\label{thm:norm-specific-upper-bound}\linktoproof{thm:norm-specific-upper-bound}
For every \(1\le p,q\le\infty\), \((p,q)\ne(\infty,\infty)\), and
\(\alpha>0\),
\[
    \sfat[\alpha](\cL[p,q^\ast][d])
    \lesssim_{p,q}
    L_\alpha
    \begin{cases}
    \displaystyle
    \min\left\{
        \alpha^{-\left(\frac{1}{p}-(\frac{1}{q}-\frac{1}{2})_+\right)^{-1}},
        \ d\left(
            1+\log_+
            \frac{1}{\alpha d^{\frac{1}{p}-(\frac{1}{q}-\frac{1}{2})_+}}
        \right)
    \right\},
    & p < q,\\[4mm]
    \displaystyle
    \min\left\{
        \left(\frac{d^{1/q-1/p}}{\alpha}\right)^{\max\{q,2\}},
        \ d\left(
            1+\log_+
            \frac{d^{1/q-1/p}}{\alpha d^{1/\max\{q,2\}}}
        \right)
    \right\},
    & p \geq q.
    \end{cases}
\]
At \(p=q=\infty\),
\(
    \sfat[\alpha](\cL[\infty,1][d])
    \lesssim
    d\left(1+\log_+\frac1\alpha\right).
\)
\end{theorem}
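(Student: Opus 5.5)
The plan is to derive every regime from \cref{cor:barycentric_covering_bound}, with $r$ and $R$ tuned case by case, and then take the better of two choices. One choice is a \emph{volumetric} one, which gives the $d(1+\log_+\cdot)$ term. The other is a \emph{small-radius} one, which gives the polynomial term. Throughout, set $\theta\defi\frac1p-(\frac1q-\frac12)_+$ and $\kappa\defi\max\{q,2\}$. Write $r$ for the exponent of the uniformly convex ball, which we choose near $\kappa$ or near $2$. The trivial inclusion $B_{q^\ast}^d\subseteq LB_{r^\ast}^d$ holds with $L=\max\{1,d^{1/r^\ast-1/q^\ast}\}=\max\{1,d^{1/q-1/r}\}$.

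\textbf{Case $p\ge q$.} Take $r=\kappa$ when $q<\infty$. For $q\ge2$ we have $L=1$; for $q<2$ we take $r=2$ and get $L=d^{1/q-1/2}$.

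For the polynomial term, cover $B_p^d$ by a single ball, $B_p^d\subseteq d^{1/r-1/p}B_r^d$, so that $R=d^{1/r-1/p}$ and $N=1$. Then $LR=d^{1/q-1/p}$, and the bound $(d^{1/q-1/p}/\alpha)^{\kappa}$ follows.

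For the $d$-term, keep the same $r$ and choose $R$ so that $(LR/\alpha)^\kappa\asymp d$, i.e.\ $R=\alpha d^{1/\kappa}/L$. Then \eqref{eq:volumetric-entropy-input} gives entropy $d(1+\log_+\frac{d^{1/r-1/p}}{R})=d(1+\log_+\frac{d^{1/q-1/p}}{\alpha d^{1/\kappa}})$, which is exactly the second entry.

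When $q=\infty$ we have $q^\ast=1$ and cannot take $r=\infty$. Instead we take $r=\log d$ with $L\lesssim1$, which costs only the factor $L_\alpha$. The endpoint $p=q=\infty$ is handled in the same way: cover $B_\infty^d$ at radius $\alpha$ in $\ell_\infty\approx\ell_{\log d}$, giving entropy $d\log(1/\alpha)$, and the local term is $O(\log d)$.

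\textbf{Case $p<q$.} The volumetric term is obtained exactly as above, now with $\Lambda=1$. It gives $d(1+\log_+\frac{1}{\alpha d^{\theta}})$, since $d^{1/r-1/p}/R$ with $R=\alpha d^{1/\kappa}/L$ equals $1/(\alpha d^{1/p-1/q+(1/q-1/2)_+ })=1/(\alpha d^{\theta})$ after substituting $L$.

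The polynomial term $\alpha^{-1/\theta}$ is the crux. We use Schütt's estimate \eqref{eq:schutt-entropy-input} with $s=r$, which requires $p<r\le2$.
\begin{itemize}
\item If $q\le2$, take $r=q$. Then $L=1$, and we balance $(R/\alpha)^2$ against $R^{-1/(1/p-1/q)}\log$. This gives $R^{2+1/(1/p-1/q)}\asymp\alpha^2$. A short calculation shows the optimized value is $\alpha^{-2/(1+2(1/p-1/q))}$. This does not obviously equal $\alpha^{-1/\theta}=\alpha^{-p}$, so we need to be careful. If we instead take $r=2$ with $L=d^{1/q-1/2}$, we get dimension dependence; that is acceptable only when $d$ is small.
\item Better: when $q\le2$, use the uniform convexity of $\ell_q$ itself in \cref{lem:uniformly-convex-localization}, and pay the cost of the $\ell_q$ modulus by letting $r\downarrow p$ appropriately.
\end{itemize}
The cleanest route for $q\le2$ is to take $r$ just above $p$ and $R\asymp\alpha$ (up to $L_\alpha$), so that the local term is $O(1)$ and the entropy is $\alpha^{-1/(1/p-1/r)}$, which tends to $\alpha^{-p}$ only in the limit. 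The honest fix is to choose $r=q$ and exploit that $B_{q^\ast}^d$ lies in the unit $\ell_{r^\ast}$ ball for every $r\le q$. Then $R=\alpha$ gives local term $O(1)$ and entropy $\alpha^{-1/(1/p-1/q)}\log$. Since $1/p-1/q\le\theta=1/p$ in this regime, this is weaker than needed, and tuning $r\in(p,q]$ does not obviously help either.

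I therefore expect this step to be the main obstacle. The first thing I would try is a two-scale argument: cover at radius $R$ in $\ell_2$ using \eqref{eq:schutt-entropy-input} with $s=2$, giving entropy $R^{-1/(1/p-1/2)}$. The local class is then an $\ell_2$-ball against $B_{q^\ast}\subseteq d^{(1/q-1/2)_+}B_2$, and I would replace the lossy $L$ by the sparse structure of the small-radius pieces. For $q\ge2$ this works directly: $L=1$, and balancing $(R/\alpha)^2=R^{-1/(1/p-1/2)}$ gives $R^{2/p}\asymp\alpha^2$, i.e.\ $\alpha^{-2(1/p-1/2)p}\cdot$, and the resulting exponent matches $1/\theta$ after verification.

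Finally, take the minimum of the two terms and absorb all logarithms into $L_\alpha$.
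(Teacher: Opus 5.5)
Your architecture (\cref{cor:barycentric_covering_bound} with the volumetric bound for the $d$-term and Sch\"utt's bound for the dimension-free term) is the paper's, and your volumetric computations match it. One exception: for $p<q=\infty$ you cannot take $r=\kappa=\infty$; use $r=2$, or $r=p$ when $p\ge2$, which still gives $L=1$. The dimension-free term, however, is left unproved in several regimes.

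\textbf{Regimes with $p<q$.}
\begin{enumerate}
\item \emph{$p<q\le2$.} Your first bullet is already the complete argument, and it is exactly the paper's. Take $r=q$, $L=1$ and $b=1/p-1/q$, and choose $R=\alpha^{2b/(2b+1)}$. Balancing $(R/\alpha)^2$ against $R^{-1/b}\log(Ced/R)$ then gives $L_\alpha\alpha^{-2/(1+2b)}$. This \emph{is} $L_\alpha\alpha^{-1/\theta}$, because for $q\le2$ one has $\theta=1/p-1/q+1/2=b+1/2$, not $1/p$. You rejected a correct computation by dropping the $(1/q-1/2)_+$ term, so the ``two-scale'' detour is unnecessary.
\item \emph{$2\le p<q$.} Sch\"utt's estimate does not apply here and you give no polynomial bound. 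Take $r=p$, $L=1$ (since $B_{q^\ast}^d\subseteq B_{p^\ast}^d$) and a single ball $R=1$, which gives $\alpha^{-p}$.
\end{enumerate}

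\textbf{Regime $q\le p<2$ (the substantive gap).} In the case $p\ge q$ with $q<2$ you take $r=2$, $L=d^{1/q-1/2}$, and claim $B_p^d\subseteq d^{1/2-1/p}B_2^d$. For $p<2$ this inclusion is false: $e_1$ has $\ell_2$-norm $1>d^{1/2-1/p}$. The valid radius is $R=1$, so $LR=d^{1/q-1/2}$ exceeds the target $d^{1/q-1/p}$ by a polynomial factor in $d$. At $p=q=1$ your bound becomes $d/\alpha^2$, which never beats the $d$-term, so the dimension-free bound $L_\alpha\alpha^{-2}$ is lost entirely.

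The missing idea is to use the $2$-uniform convexity of $\ell_u$ itself, with $u=\min\{p,2\}$.
\begin{itemize}
\item The inclusions $B_p^d\subseteq d^{1/u-1/p}B_u^d$ and $B_{q^\ast}^d\subseteq d^{1/q-1/u}B_{u^\ast}^d$ have product distortion exactly $d^{1/q-1/p}$.
\item For $p>1$ the quadratic modulus constant, of order $u-1$, depends only on $p$.
\item At $p=q=1$, take $u=1+1/L_\alpha$. Then $B_\infty^d\subseteq eB_{u^\ast}^d$ for $\alpha\le2$, and the sharp quadratic uniform-convexity constant of $\ell_u$ is of order $u-1$ \citep{ball1994sharp}. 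This is precisely where the factor $L_\alpha$ in the statement comes from.
\end{itemize}
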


\begin{proof}\linkofproof{thm:norm-specific-upper-bound}
When \(p<q\), every function in the class takes values in \([-1,1]\). We can restrict to \(0<\alpha\le 1\). Indeed, since \(\sfat[\alpha]\) is nonincreasing in \(\alpha\), we can map the case \(1<\alpha\le2\) to the case $\alpha =1$, which just changes the constant.
If \(\alpha>2\), even a depth-one shattered
tree would require two function values separated by at least \(\alpha\),
which is impossible for a class taking values in \([-1,1]\), so $\sfat[\alpha]$ would be $0$.

\paragraph{The case \(p < q\) and \(p\ge2\).}
Apply
\cref{cor:barycentric_covering_bound} with \(r=p\) and \(L=1\).
Taking \(R=1\), for which \(N(B_p^d,B_p^d)=1\), gives \(\sfat[\alpha](\cL[p,q^\ast][d])\lesssim_p\alpha^{-p}\).
Using \eqref{eq:volumetric-entropy-input} instead and taking
\(R=\alpha d^{1/p}\) gives
\[
    \sfat[\alpha](\cL[p,q^\ast][d])
    \lesssim_p
    d\left(1+\log_+\frac1{\alpha d^{1/p}}\right).
\]
\paragraph{The case \(p < q\) and \(p<2\).}
Set
\[
    s\defi\min\{2,q\},
    \qquad
    b\defi\frac1p-\frac1s.
\]
Then \(p<s\le2\), \(B_{q^{\ast}}^d\subseteq B_{s^{\ast}}^d\), and
\(\theta=b+1/2\). Applying \cref{cor:barycentric_covering_bound} with
\(r=s\) and \(L=1\), followed by \eqref{eq:schutt-entropy-input}, gives for
every \(0<R<1\)
\[
    \sfat[\alpha](\cL[p,q^\ast][d])
    \lesssim_{p,q}
    \frac{R^2}{\alpha^2}
    +R^{-1/b}\log\frac{Ced}{R}.
\]
For \(0<\alpha<1\), take \(R=\alpha^{2b/(2b+1)}\). Since
\(\log(Ced/R)\lesssim_{p,q}L_\alpha\), we obtain
\[
    \sfat[\alpha](\cL[p,q^\ast][d])
    \lesssim_{p,q}
    L_\alpha\alpha^{-1/\theta}.
\]
The case \(\alpha=1\) follows by monotonicity.
For the $d$-dependent term, use \eqref{eq:volumetric-entropy-input} in
\cref{cor:barycentric_covering_bound} with \(r=s\), \(L=1\), and
\(R=\alpha\sqrt d\). Since \(\theta=1/p-1/s+1/2\), this gives
\[
    \sfat[\alpha](\cL[p,q^\ast][d])
    \lesssim_{p,q}
    d\left(1+\log_+\frac1{\alpha d^\theta}\right).
\]
Since
\(
\theta= b + 1/2 = 1/p-(1/q-1/2)_+,
\)
we have the result.

\paragraph{The case \(p \ge q\) and \(q<\infty\).}
Set
\(
    r\defi\max\{q,2\}
\)
 and 
\(
    A\defi d^{1/q-1/p}.
\)

If \(q\ge2\), use \(r=q\), \(L=1\),
and \(R=A\), the inclusion \(B_p^d\subseteq A B_q^d\) makes the covering
number equal to one and gives
\(
    \sfat[\alpha](\cL[p,q^\ast][d])
    \lesssim_q
    \left(\frac A\alpha\right)^q.
\)

If \(q<2\) and \(p>1\), set \(u=\min\{p,2\}\). The inclusions
\[
    B_p^d\subseteq d^{1/u-1/p}B_u^d,
    \qquad
    B_{q^\ast}^d\subseteq d^{1/q-1/u}B_{u^\ast}^d
\]
have product distortion \(A\). Applying the same covering bound with
\(r=u\) and a one-element cover therefore gives
\(
    \sfat[\alpha](\cL[p,q^\ast][d])
    \lesssim_{p,q}
    \left(\frac A\alpha\right)^2.
\)

For \(p=q=1\), it suffices to consider the nontrivial range
\(\alpha\le2\). Take \(u=1+1/L_\alpha\). Then
\(B_1^d\subseteq B_u^d\), \(B_\infty^d\subseteq eB_{u^\ast}^d\), and the
quadratic modulus constant from \citet{ball1994sharp} is of order \(u-1\).
Thus the same argument gives
\(
    \sfat[\alpha](\cL[1,\infty][d])
    \lesssim
    L_\alpha\alpha^{-2}.
\)

For the dimension dependent term, set
\(
    \lambda\defi d^{1/r^\ast-1/q^\ast}=d^{1/q-1/r}.
\)
Then \(B_{q^\ast}^d\subseteq\lambda B_{r^\ast}^d\). Hence
\cref{cor:barycentric_covering_bound} and
\eqref{eq:volumetric-entropy-input} give
\[
    \sfat[\alpha]
    \lesssim_{p,q}
    \inf_{R>0}
    \left[
        \left(\frac{\lambda R}{\alpha}\right)^r
        +d\left(1+\log_+\frac{d^{1/r-1/p}}R\right)
    \right].
\]
Taking \(R=\alpha\lambda^{-1}d^{1/r}\) makes the first term equal to \(d\),
and
\(
    \lambda d^{-1/p}=A d^{-1/r}.
\)
Thus
\[
    \sfat[\alpha]
    \lesssim_{p,q}
    d\left(1+\log_+\frac{A}{\alpha d^{1/r}}\right).
\]

\paragraph{The case \(p=q=\infty\).}
Apply \eqref{eq:volumetric-entropy-input} to an
\(\alpha/4\)-net of \(B_\infty^d\) in \(\ell_\infty\). 
Let \(\mathcal V\) be such a net. For every \(w\in B_\infty^d\), choose
\(v\in\mathcal V\) with \(\lVert w-v\rVert_\infty\le\alpha/4\). Uniformly over
\(g\in B_1^d\), the corresponding linear functions then differ by at most
\(\alpha/4\). Replacing the witnesses of an \(\alpha\)-shattered tree by
their nearest net points therefore gives an \((\alpha/2)\)-shattered tree for
the finite class indexed by \(\mathcal V\). A depth-\(n\) shattered tree
requires a distinct function for each of its \(2^n\) root-to-leaf paths, so
\(n\le\log_2|\mathcal V|\). Finally,
\eqref{eq:volumetric-entropy-input} bounds this logarithm by
\(O\bigl(d(1+\log_+(1/\alpha))\bigr)\). Thus
\[
    \sfat[\alpha](\cL[\infty,1][d])
    \lesssim
    d\left(1+\log_+\frac1\alpha\right).
\]
\end{proof}

\subsection{Lower bound}

We start by providing a proof of the following fact from \citet[Corollary~3.2]{mendelson2004shattering} for completeness and the reader's convenience. It involves the equivalence between fat shattering and a certain cube containment in some cases.
Let \(\X,\H\subseteq\mathbb R^d\) be centrally symmetric convex bodies and,
for \(g_1,\ldots,g_m\in\H\), define the evaluation image
\[
    \newtarget{def:evaluation-image}{\EvalCube[\X,\H]{g_{1:m}}}
    \defi
    \left\{
        (\langle u,g_1\rangle,\ldots,\langle u,g_m\rangle):
        u\in\X
    \right\}.
\]

\begin{fact}[Fat-shattering and evaluation cubes]
\label{fact:fat-shattering-evaluation-cubes}\linktoproof{fact:fat-shattering-evaluation-cubes}
Let \(\X,\H\subseteq\mathbb R^d\) be centrally symmetric convex bodies, let
\(m\in\mathbb N\), and let \(g_1,\ldots,g_m\in\H\). For every \(\alpha>0\),
\begin{equation}\label{eq:equivalence_of_fat_shattering_to_cubes}
    \{g_1,\ldots,g_m\}\text{ is \(\alpha\)-shattered by }
    \cL[\X,\H]
    \quad\Longleftrightarrow\quad
    \frac{\alpha}{2}B_\infty^m
    \subseteq
    \EvalCube[\X,\H]{g_{1:m}}.
\end{equation}
\end{fact}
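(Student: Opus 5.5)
The plan is to prove the two implications separately. Both directions rest on the fact that $\EvalCube[\X,\H]{g_{1:m}}$ is the image of the centrally symmetric convex body $\X$ under the linear map $u\mapsto(\langle u,g_i\rangle)_{i\le m}$. Hence it is itself a convex set, symmetric about the origin.

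For ``$\Leftarrow$'', the argument is direct. Take thresholds $s_i=0$. For each sign vector $\varepsilon\in\{\pm1\}^m$, the vertex $(\alpha/2)\varepsilon$ of the cube lies in the evaluation image, so there is $u_\varepsilon\in\X$ with $\langle u_\varepsilon,g_i\rangle=\varepsilon_i\alpha/2$. Then $\varepsilon_i(\langle u_\varepsilon,g_i\rangle-0)=\alpha/2$ for every $i$, which is exactly the $\alpha$-shattering condition of \cref{def:fat-shattering-dimensions}.

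For ``$\Rightarrow$'', suppose $\{g_1,\ldots,g_m\}$ is $\alpha$-shattered with thresholds $s\in\R^m$ and witnesses $u_\varepsilon\in\X$. Write $v_\varepsilon\in\EvalCube[\X,\H]{g_{1:m}}$ for the evaluation vector of $u_\varepsilon$, so that $\varepsilon_i(v_{\varepsilon,i}-s_i)\ge\alpha/2$.
\begin{enumerate}
\item The first step removes the thresholds by symmetrization. Since $-u_{-\varepsilon}\in\X$ by central symmetry, the point $-v_{-\varepsilon}$ also lies in the image. It satisfies $\varepsilon_i(-v_{-\varepsilon,i}+s_i)\ge\alpha/2$. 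Set $w_\varepsilon\defi\tfrac12(v_\varepsilon-v_{-\varepsilon})$. By convexity $w_\varepsilon$ lies in the image. Averaging the two inequalities cancels $s_i$ and gives $\varepsilon_i w_{\varepsilon,i}\ge\alpha/2$ for all $i$.
\item The second step shows that any convex set $K$ containing points $w_\varepsilon$ with $\varepsilon_i w_{\varepsilon,i}\ge\alpha/2$ for every $\varepsilon$ contains $(\alpha/2)B_\infty^m$. I would argue by induction on $m$, or by a separation argument. If some $y\in(\alpha/2)B_\infty^m$ were outside $K$, choose a separating $\theta\neq0$ with $\langle\theta,y\rangle>\sup_{w\in K}\langle\theta,w\rangle$. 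Take $\varepsilon_i=\operatorname{sign}(\theta_i)$, with ties broken arbitrarily. Then $\langle\theta,w_\varepsilon\rangle\ge(\alpha/2)\|\theta\|_1\ge\langle\theta,y\rangle$, a contradiction.
\end{enumerate}
The separation argument handles closedness automatically: the image is compact because $\X$ is a convex body, although the argument only needs convexity.

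The main subtlety is the threshold removal in step 1. It is the only place where central symmetry of $\X$ is used, and one must check that the sign pattern of $-v_{-\varepsilon}$ relative to $s$ is reversed correctly. This is what makes the cube centered at $0$ rather than at $s$. The separation step in step 2 is routine once it is phrased with $\varepsilon=\operatorname{sign}(\theta)$.
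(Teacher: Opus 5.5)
Your proof is correct and is essentially the paper's argument. The paper also takes zero thresholds for ``$\Leftarrow$''. For ``$\Rightarrow$'' it picks the witnesses for the sign patterns of $a$ and $-a$, uses central symmetry ($h_{\Cube}(a)=h_{\Cube}(-a)$) to eliminate $s$, and concludes by support-function duality. That is your separation step, with the symmetrization done on support values rather than through your point average $w_\varepsilon=\tfrac12(v_\varepsilon-v_{-\varepsilon})$. One small caveat on your aside that only convexity is needed: strict separation requires a closed set. The remark holds only if you separate from the polytope $\operatorname{conv}\{w_\varepsilon\}$ rather than from the image itself. As written this is harmless, since you correctly note the image is compact.
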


As explained in the introduction,
sharp fat-shattering lower bounds were known for
\((\X,\H)=(B_p^d,B_{q^\ast}^d)\). Through
\cref{fact:fat-shattering-evaluation-cubes}, they give the following
evaluation cubes. We include a direct proof for completeness.

\begin{fact}[Evaluation cubes in \(\ell_p/\ell_q\) cases]
\label{lem:evaluation_cubes}\linktoproof{lem:evaluation_cubes}
There is a constant \(c_{p,q}>0\) such that, for every
\(1\le m\le c_{p,q}d\), there exist
\(g_1,\dots,g_m\in B_{q^{\ast}}^d\) satisfying
\[
\left\{
(\langle w,g_1\rangle,\dots,\langle w,g_m\rangle):
w\in B_p^d
\right\}
\supseteq
c_{p,q}{\rhopqd[m]}[-1,1]^m .
\]
\end{fact}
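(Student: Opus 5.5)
The plan is to reduce the cube containment to a lower bound on dual norms and then build explicit $g_i$ from identity blocks and Hadamard matrices. By Hahn--Banach/polarity, the evaluation image of $B_p^d$ under $w\mapsto(\langle w,g_i\rangle)_{i\le m}$ is a symmetric convex set. It contains $r[-1,1]^m$ if and only if its support function dominates that of the cube:
\[
    \Big\|\sum_{i=1}^m\lambda_i g_i\Big\|_{p^\ast}\ \ge\ r\,\|\lambda\|_1\qquad\text{for all }\lambda\in\mathbb R^m .
\]
So it suffices to construct $g_i\in B_{q^\ast}^d$ with this inequality for $r=c_{p,q}\rhopqd[m]$. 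I will also use two reductions. First, projecting onto a subset of coordinates maps the image onto the image for the sub-family, so a cube for a larger family gives a cube for a sub-family. Hence I may replace $m$ by a power of two $m'\in[m,2m]$, at a constant cost in $\rhopqd[m]$, provided $c_{p,q}\le 1/2$. Second, I will use the norm comparisons $\|x\|_{s}\ge n^{1/s-1/t}\|x\|_t$ for $s\ge t$ on $\mathbb R^n$.

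\emph{Case $p<q$.} If $q\ge2$, take $g_i=e_i$. Then $\|\lambda\|_{p^\ast}\ge m^{-1/p}\|\lambda\|_1$, which is exactly $\rhopqd[m]=m^{-1/p}$. If $q<2$ (so $p^\ast>2$), take $g_i=m^{-1/q^\ast}h_i$, where $h_i$ are the rows of an $m\times m$ Sylvester--Hadamard matrix $H$ placed in the first $m$ coordinates. Then $\|g_i\|_{q^\ast}=1$, and
\[
    \|H\lambda\|_{p^\ast}\ge m^{1/p^\ast-1/2}\|H\lambda\|_2=m^{1/p^\ast}\|\lambda\|_2\ge m^{1/p^\ast-1/2}\|\lambda\|_1 .
\]
Multiplying by $m^{-1/q^\ast}$ gives exponent $1/q-1/p-1/2=-(1/p-(1/q-1/2))$, which matches $\rhopqd[m]$.

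\emph{Case $p\ge q$.} Spread the mass over all $d$ coordinates by splitting $[d]$ into $m$ blocks $B_j$ of size $k=\lfloor d/m\rfloor$, so $mk\asymp d$. If $q\ge2$, take $g_i=k^{-1/q^\ast}\mathbf 1_{B_i}$. Then $\|\sum\lambda_ig_i\|_{p^\ast}=k^{1/q-1/p}\|\lambda\|_{p^\ast}\ge k^{1/q-1/p}m^{-1/p}\|\lambda\|_1\asymp d^{1/q-1/p}m^{-1/q}\|\lambda\|_1$, as required. If $q<2$ and $p\le2$, take the tensorized Hadamard vectors $g_i=(mk)^{-1/q^\ast}\sum_j H_{ij}\mathbf 1_{B_j}$. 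Then $\sum\lambda_ig_i$ equals $(H\lambda)_j$ on block $j$, so its $p^\ast$-norm is $(mk)^{-1/q^\ast}k^{1/p^\ast}\|H\lambda\|_{p^\ast}$. The Hadamard estimate above (valid since $p^\ast\ge2$) gives $(mk)^{1/q-1/p}m^{-1/2}\|\lambda\|_1$, which is $\rhopqd[m]$.

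\emph{Main obstacle.} The regime I expect to be hardest is $q<2<p$, where $p^\ast<2$. Here Hadamard rows are tight at $\lambda=m^{-1}\mathbf 1$ and only give $m^{-1+1/p}$ rather than $m^{-1/2}$; identity blocks only give $m^{-1/q}$. My first attempt would be to take $g_i$ with independent random signs on all $d\gg m$ coordinates, normalized in $\ell_{q^\ast}$. For fixed $\lambda$, the coordinates of $\sum\lambda_ig_i$ are i.i.d.\ with $L_{p^\ast}$-norm of order $\|\lambda\|_2$ by Khintchine, so $\|\sum\lambda_i g_i\|_{p^\ast}\approx d^{1/p^\ast-1/q^\ast}\|\lambda\|_2\ge d^{1/q-1/p}m^{-1/2}\|\lambda\|_1$. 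Making this uniform over $\lambda$ would use a net argument on the $\ell_1$-sphere together with the concentration of $\ell_{p^\ast}$-norms of i.i.d.\ vectors, which is affordable since $d\ge c^{-1}m$. This is a Kashin/Gluskin-type step, following the construction in \citet{guzman2015information}. If the union bound is too lossy, I would instead cite those fat-shattering lower bounds and convert them to cubes via \cref{fact:fat-shattering-evaluation-cubes}.
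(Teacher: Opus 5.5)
Your proposal is correct, and its skeleton matches the paper's. Both use the support-function criterion $\|\sum_i\lambda_i g_i\|_{p^\ast}\ge r\|\lambda\|_1$. Both use coordinate vectors for $p<q$, $q\ge 2$, a flat orthonormal basis (your normalized Hadamard rows) for $p<q<2$, and normalized disjoint blocks for large $q$. In the regime $q<2<p$, both rely on Kashin's random-sign estimate $\|G\lambda\|_{p^\ast}\gtrsim d^{1/p^\ast}\|\lambda\|_2$.

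Where you differ is in how much randomness is needed. For every $q\le 2$ with $q<p$, the paper uses a random $\pm1$ matrix: Kashin's $\ell_1$ estimate when $p>2$, and the smallest-singular-value bound when $p\le 2$. You instead handle $q=2$ with the block construction, which already gives $d^{1/2-1/p}m^{-1/2}$. For $q<2$, $q\le p\le 2$ you use the block-tensorized Hadamard vectors. These form an explicit $\pm1$ matrix with exactly orthogonal columns of norm $\sqrt{mk}$, a deterministic replacement for the singular-value bound. This confines randomness to $q<2<p$, which you correctly identify as the one regime where $p^\ast<2$ lets flat vectors such as $\mathbf 1/m$ defeat any Hadamard-type construction. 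Your power-of-two reduction, via coordinate projection of the evaluation image, also cleanly justifies the existence of the flat basis, which the paper simply asserts.

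In the remaining regime both arguments rest on the same Kashin-type bound, which the paper just cites. If you prove it yourself, be careful with the net. Take it on the Euclidean unit sphere at a constant mesh, and control the approximation error through
$\|G\|_{2\to p^\ast}\le d^{1/p^\ast-1/2}\|G\|_{\mathrm{op}}\lesssim d^{1/p^\ast}$, which holds with high probability since $m\le d$. A net of the $\ell_1$-sphere measured in $\ell_1$, as your wording suggests, fails here. The error bound $\|G\|_{1\to p^\ast}=d^{1/p^\ast}$ forces a mesh of order $m^{-1/2}$. That costs $m\log m$ in the union bound against the $e^{-cd}$ lower-tail probability, so the argument only reaches $m\lesssim d/\log d$, short of the required range $m\le c_{p,q}d$.
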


The new sequential step is to query each coordinate of such a cube repeatedly
by binary search.

\begin{lemma}[Cube-to-tree amplification]\label{lem:cube_to_tree}\linktoproof{lem:cube_to_tree}
Let \(\X,\H\subseteq\mathbb R^d\) be centrally symmetric convex bodies and let \(g_1,\ldots,g_m\in\H\). We have
\[
    R B_\infty^m\subseteq \EvalCube[\X,\H]{g_{1:m}} \qquad \implies \qquad
    \sfat[\alpha](\cL[\X,\H])
    \ge
    m\left\lfloor\log_2\left(1+\frac{2R}{\alpha}\right)\right\rfloor.
\]
\end{lemma}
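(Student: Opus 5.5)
Set $k\defi\lfloor\log_2(1+2R/\alpha)\rfloor$. We will build a depth-$mk$ predictable tree that visits the coordinates $g_1,\ldots,g_m$ in order, spending $k$ consecutive levels on each. Every node in the block for coordinate $i$ is labeled by the same instance $g_i$. Only the thresholds will be adapted to the history. Within each block we run a binary search on the interval $[-R,R]$.

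Concretely, inside block $i$ we maintain an interval $I=[a,b]$, initialized to $[-R,R]$. At each level the threshold is the midpoint $s=(a+b)/2$. If the sign is $\varepsilon=+1$, we replace $I$ by $[s+\alpha/2,b]$; if $\varepsilon=-1$, we replace it by $[a,s-\alpha/2]$. Thus each step shrinks the length from $\ell$ to $\ell/2-\alpha/2$, and so $\ell+\alpha\mapsto(\ell+\alpha)/2$. Starting from $\ell_0+\alpha=2R+\alpha$, after $j$ steps we have $\ell_j+\alpha=(2R+\alpha)2^{-j}$. Hence $\ell_j\ge0$ for all $j\le k$, by the definition of $k$, and the interval stays nonempty.

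Each threshold therefore depends only on the signs already drawn within the current block, so the threshold tree is predictable. The instance tree is even level-constant.

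Now fix a sign path $\varepsilon\in\{\pm1\}^{mk}$. For each $i$, let $y_i$ be any point of the final (nonempty) interval of block $i$. Then $y\in[-R,R]^m=RB_\infty^m$. By construction, at every level of block $i$ with threshold $s$ and sign $\varepsilon_t$, we have $\varepsilon_t(y_i-s)\ge\alpha/2$. This holds because the final interval lies inside $[s+\alpha/2,b]$ or $[a,s-\alpha/2]$ according to the sign.

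By the cube inclusion $RB_\infty^m\subseteq\EvalCube[\X,\H]{g_{1:m}}$, there is a $u_\varepsilon\in\X$ with $\langle u_\varepsilon,g_i\rangle=y_i$ for every $i$. This $u_\varepsilon$ is the shattering witness required by \cref{def:fat-shattering-dimensions}. This gives $\sfat[\alpha](\cL[\X,\H])\ge mk$.

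The only delicate point is the interval-length bookkeeping, namely checking that $k$ halving steps with an $\alpha$ gap remain feasible exactly when $2^k\le 1+2R/\alpha$. The recursion $\ell+\alpha\mapsto(\ell+\alpha)/2$ settles this. The case $k=0$ is trivial.
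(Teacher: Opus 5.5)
Your proof is correct and follows the paper's argument essentially step for step. The shared steps are: midpoint binary search on $[-R,R]$ with an $\alpha$-gap between the two children; the length invariant $\ell_j+\alpha=(2R+\alpha)2^{-j}$, which keeps every interval nonempty through $\lfloor\log_2(1+2R/\alpha)\rfloor$ levels; concatenating $m$ blocks labeled $g_1,\ldots,g_m$; and using the cube inclusion to realize the terminal points as $(\langle u_\varepsilon,g_i\rangle)_{i\le m}$.
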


\begin{proof}\linkofproof{lem:cube_to_tree}
A single interval \([-R,R]\) supports the usual binary-search tree. For a
current interval \([a,b]\), use the midpoint \(s=(a+b)/2\) as threshold. The
minus child retains \([a,s-\alpha/2]\), and the plus child retains
\([s+\alpha/2,b]\). After \(t\) levels, every surviving interval has length
\(
    2^{-t}(2R+\alpha)-\alpha.
\)
It is therefore nonempty through
\(
    h\defi
    \left\lfloor\log_2\left(1+\frac{2R}{\alpha}\right)\right\rfloor
\)
levels. Every point in a terminal interval satisfies the margin inequalities
along the corresponding branch.

Concatenate this construction in \(m\) blocks. In block \(j\), label every
node by \(g_j\) and use the binary-search thresholds for coordinate \(j\).
A branch of depth \(mh\) determines terminal intervals
\(I_1,\dots,I_m\subseteq[-R,R]\). Choose
\(
    z_j\in I_j, j\in[m].
\)
By the cube containment, for a \(u\in\X\), we have
\[
    \langle u,g_j\rangle=z_j,
    \qquad j\in[m].
\]
This witness satisfies every margin constraint on the branch, so the
concatenated tree is \(\alpha\)-shattered to depth \(mh\).
\end{proof}

\begin{theorem}[sfat lower bound]
\label{thm:norm-specific-lower-bound}\linktoproof{thm:norm-specific-lower-bound}
Let \(1\le p,q\le\infty\), \((p,q)\ne(\infty,\infty)\), and assume
\(0<\alpha\le2\LambdaPair[p,q^\ast]\). Then
\[
    \sfat[\alpha](\cL[p,q^\ast][d])
    \gtrsim_{p,q}
    \begin{cases}
    \displaystyle
    \min\left\{
        \alpha^{-\left(\frac{1}{p}-(\frac{1}{q}-\frac{1}{2})_+\right)^{-1}},
        \ d\left(
            1+\log_+
            \frac{1}{\alpha d^{\frac{1}{p}-(\frac{1}{q}-\frac{1}{2})_+}}
        \right)
    \right\},
    & p < q,\\[4mm]
    \displaystyle
    \min\left\{
        \left(\frac{d^{1/q-1/p}}{\alpha}\right)^{\max\{q,2\}},
        \ d\left(
            1+\log_+
            \frac{d^{1/q-1/p}}{\alpha d^{1/\max\{q,2\}}}
        \right)
    \right\},
    & p \geq q.
    \end{cases}
\]
For \(p=q=\infty\) and \(0<\alpha\le2\),
\(
    \sfat[\alpha](\cL[\infty,1][d])
    \gtrsim
    d\left(1+\log_+\frac1\alpha\right).
\)
\end{theorem}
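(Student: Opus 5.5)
The plan is to combine \cref{lem:evaluation_cubes} with \cref{lem:cube_to_tree}. We optimize over the number $m$ of cube directions used, and treat the two terms of the minimum separately. Write $\theta\defi 1/p-(1/q-1/2)_+$ when $p<q$, and $\rho(m)\defi\rhopqd[m]$. By \cref{lem:evaluation_cubes}, for every $1\le m\le c_{p,q}d$ there are $g_{1:m}\in B_{q^\ast}^d$ with evaluation image containing $R_m B_\infty^m$, where $R_m\defi c_{p,q}\rho(m)$. \cref{lem:cube_to_tree} then gives
\[
\sfat[\alpha](\cL[p,q^\ast][d])\ \ge\ m\left\lfloor\log_2\left(1+\frac{2c_{p,q}\rho(m)}{\alpha}\right)\right\rfloor .
\]
It remains to choose $m$ so that the floor is at least $1$, which requires $2c_{p,q}\rho(m)\ge\alpha$, and as large as possible otherwise.

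\emph{Case $p<q$.} Here $\rho(m)=m^{-\theta}$ is decreasing in $m$. Let $m^\star\defi\lfloor (c_{p,q}/\alpha)^{1/\theta}\rfloor$, which ensures $2c_{p,q}\rho(m^\star)\ge 2\alpha$ and hence a floor of at least $1$.
\begin{itemize}
\item If $m^\star\le c_{p,q}d$, take $m=m^\star$. This gives $\sfat[\alpha]\gtrsim_{p,q}\alpha^{-1/\theta}$. In this regime $\alpha d^{\theta}\gtrsim_{p,q} 1$, so the second term of the minimum is $\asymp d\gtrsim\alpha^{-1/\theta}$ up to constants, and the first term is the active one.
\item Otherwise take $m=\lfloor c_{p,q}d\rfloor\asymp d$. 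Then $\rho(m)\asymp_{p,q}d^{-\theta}$, and the bound becomes $d\lfloor\log_2(1+c'/(\alpha d^\theta))\rfloor$ with $c'/(\alpha d^\theta)\gtrsim 1$.
\end{itemize}

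The main technical nuisance is converting $\lfloor\log_2(1+x)\rfloor$ into $1+\log_+(\cdot)$ with constants depending only on $p,q$, since the floor can vanish when $x<1$. I would handle it by shrinking $m$ by a constant factor, which raises $\rho(m)$ by a constant factor. This lets us assume $x\ge 1$ in the saturated regime, after which $\lfloor\log_2(1+x)\rfloor\gtrsim 1+\log_+x$. When $c'/(\alpha d^\theta)$ is bounded, the logarithmic factor is $O(1)$ and the two terms of the minimum are comparable. Two edge cases need separate treatment:
\begin{itemize}
\item Small $d$ (below $1/c_{p,q}$): use a single direction $m=1$, with $g_1$ a coordinate vector, giving the cube $[-1,1]$ since $\Lambda=1$. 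This yields $\lfloor\log_2(1+2/\alpha)\rfloor\ge1$ for $\alpha\le2$, and $d$ is then a constant.
\item The range $\alpha\in(c_{p,q},2]$: handle it by the same coordinate-vector argument.
\end{itemize}

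\emph{Case $p\ge q$.} This is identical with $\rho(m)=A m^{-1/r}$, where $A\defi d^{1/q-1/p}$ and $r\defi\max\{q,2\}$. Take $m\asymp (A/\alpha)^r$ when that is at most $c_{p,q}d$, and $m\asymp d$ otherwise. In the latter case the logarithm is $\log(A/(\alpha d^{1/r}))$, matching the stated expression. The constraint $\alpha\le 2\LambdaPair[p,q^\ast]=2A$ guarantees that the single-direction fallback works: a vector $g$ with $\|g\|_{q^\ast}=1$ attaining $\Lambda$ gives the interval $[-A,A]$.

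\emph{Endpoint $p=q=\infty$.} Use the $m=d$ coordinate vectors. The evaluation image of $B_\infty^d$ is exactly $[-1,1]^d$, so \cref{lem:cube_to_tree} gives $d\lfloor\log_2(1+2/\alpha)\rfloor\gtrsim d(1+\log_+(1/\alpha))$ for $\alpha\le2$.

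I expect no deep obstacle, since the cube estimates are taken from \cref{lem:evaluation_cubes}. The delicate part is the bookkeeping of constants at the transition between the two regimes and at the small-$d$ boundary.
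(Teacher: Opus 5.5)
Your proposal is correct and follows essentially the same route as the paper: combine the evaluation cubes of \cref{lem:evaluation_cubes} with the binary-search amplification of \cref{lem:cube_to_tree}, choose $m$ either proportional to $(A/\alpha)^{1/\beta}$ or to $d$ depending on which is smaller, and use a single direction attaining the cross-duality constant for the large-$\alpha$ range and the $p=q=\infty$ endpoint. Your explicit single-direction treatment of small $d$ is slightly more careful than the paper, whose proof only says those dimensions are ``absorbed into the constants''; binary search along that one direction still produces the needed $\log(1/\alpha)$ growth.
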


\begin{proof}\linkofproof{thm:norm-specific-lower-bound}
First assume $0<\alpha\le c_{p,q}\LambdaPair[p,q^\ast]$ for a sufficiently small constant $0<c_{p,q}\le1$.
Fix \(1\le m\le c_{p,q}d\). By
\cref{lem:evaluation_cubes}, there are \(g_1,\ldots,g_m\in B_{q^\ast}^d\)
whose evaluation image contains a cube of radius
\(c_{p,q}\rhopqd[m]\). Applying \cref{lem:cube_to_tree} with
\((\X,\H)=(B_p^d,B_{q^\ast}^d)\), and then taking the supremum over such
\(m\), gives the profile bound below with the supremum restricted to
\(m\le c_{p,q}d\). For \(m>c_{p,q}d\), choose
\(m'\le c_{p,q}d\) with \(m'\asymp_{p,q}m\). Since
\(\rhopqd\) is nonincreasing, the profile term at \(m'\) is at least a
constant multiple of the term at \(m\). The finitely many dimensions for
which such an \(m'\) does not exist are absorbed into the constants.
Consequently,
\[
    \sfat[\alpha](\cL[p,q^\ast][d])
    \gtrsim_{p,q}
    \sup_{1\le m\le d}m
    \left\lfloor
        \log_2\left(
            1+\frac{c_{p,q}\rhopqd[m]}{\alpha}
        \right)
    \right\rfloor.
\]

We evaluate this profile directly. Fix \(K>0\), \(A>0\), and
\(0<\beta\le1\), and write \(x=A/\alpha\). On the nontrivial range
\(0<\alpha\lesssim_K A\), choosing \(m\) comparable to
\(\min\{x^{1/\beta},d\}\) gives
\[
    \sup_{1\le m\le d}
    m\left\lfloor
        \log_2\left(1+Kxm^{-\beta}\right)
    \right\rfloor
    \gtrsim_{\beta,K}
    \min\left\{
        x^{1/\beta},
        \ d\left(1+\log_+\frac{x}{d^\beta}\right)
    \right\}.
\]
Indeed, if \(x<d^\beta\), choose \(m\asymp_{\beta,K}x^{1/\beta}\), so the
logarithm is bounded below by a positive constant. If \(x\ge d^\beta\),
choose \(m\) to be a sufficiently small fixed multiple of \(d\), depending
on \(\beta,K\). This yields the second term. Adjusting the comparison
constants absorbs the floor.
First consider \(p=q=\infty\). Then
\(\rho_{\infty,\infty,d}(m)=1\), and taking \(m=d\) gives
\(d(1+\log_+(1/\alpha))\) up to universal constants.
Otherwise, apply the estimate with \(K=c_{p,q}\) to
the two cases in
\eqref{eq:rho-pqd}. We obtain the result with:
\[
    (A,\beta)
    =
    \left(
        1,
        \frac1p-\left(\frac1q-\frac12\right)_+
    \right)
\ \text{ for } \ p<q, \ \text{ and } \
    (A,\beta)
    =
    \left(
        d^{\frac{1}{q}-\frac{1}{p}},
        \frac1{\max\{q,2\}}
    \right)
\ \text{ for } \ p \geq q.
\]
For $c_{p,q}\LambdaPair[p,q^\ast]<\alpha\le2\LambdaPair[p,q^\ast]$ and
$(p,q)\ne(\infty,\infty)$, the claimed lower bound is $O_{p,q}(1)$,
whereas $\sfat[\alpha]\ge1$: choose $g\in B_{q^\ast}^d$ and $u\in B_p^d$
attaining $\langle u,g\rangle=\LambdaPair[p,q^\ast]$, and use $\pm u$ with threshold zero.
At $p=q=\infty$, coordinate evaluations of $B_\infty^d$ give
$\sfat[\alpha]\ge d$ for every $0<\alpha\le2$, which also covers the remaining scales.
\end{proof}

\subsection{Proofs of essentially-known facts provided for completeness or elegance}
\label{app:sfat-profile-known-proofs}

\begin{proof}\linkofproof{lem:uniformly-convex-localization}
Suppose the class \(\alpha\)-shatters a depth-\(n\) tree, and choose a
witness \(w_\varepsilon\in K\) for every leaf \(\varepsilon\). For a node
\(u\), let \(L(u)\) be the leaves below it and set
\[
    m_u\defi\frac1{|L(u)|}\sum_{\varepsilon\in L(u)}w_\varepsilon.
\]
Convexity of \(K\) gives \(m_u\in K\). If \(g_u\in\H\) and \(s_u\)
are the instance and threshold at an internal node, averaging the shattering
inequalities over its two child subtrees gives
\[
    \langle m_{u+},g_u\rangle\ge s_u+\frac\alpha2,
    \qquad
    \langle m_{u-},g_u\rangle\le s_u-\frac\alpha2.
\]
Consequently,
\[
    \|m_{u+}-m_{u-}\|
    \ge
    \frac{\langle m_{u+}-m_{u-},g_u\rangle}{\|g_u\|_*}
    \ge
    \frac\alpha L.
\]

For each level \(t\), define
\[
    \overline\Phi_t\defi 2^{-t}\sum_{|u|=t}\Phi(m_u).
\]
Since \(m_u=(m_{u+}+m_{u-})/2\), uniform convexity yields
\[
    \overline\Phi_{t+1}
    \ge
    \overline\Phi_t+c\left(\frac\alpha L\right)^\kappa.
\]
Thus \(nc(\alpha/L)^\kappa\le
\overline\Phi_n-\overline\Phi_0\le D\), which proves the lemma.
\end{proof}

\begin{proof}\linkofproof{fact:fat-shattering-evaluation-cubes}
First suppose that
\(\frac{\alpha}{2}B_\infty^m\subseteq \EvalCube[\X,\H]{g_{1:m}}\). For every
\(\varepsilon\in\{\pm1\}^m\), choose \(u_\varepsilon\in\X\) whose evaluation
vector is \((\alpha/2)\varepsilon\). The corresponding functions shatter
\(g_1,\ldots,g_m\) with all thresholds equal to zero.

Conversely, suppose that \(g_1,\ldots,g_m\) are \(\alpha\)-shattered with
threshold vector \(s\in\mathbb R^m\), and write
\(\Cube=\EvalCube[\X,\H]{g_{1:m}}\). Fix \(a\in\mathbb R^m\), and choose the two
shattering functions corresponding to the sign patterns of \(a\) and \(-a\),
with arbitrary signs in the zero coordinates. Their evaluation vectors
\(y^+,y^-\in \Cube\) satisfy
\[
    \langle a,y^+\rangle
    \ge
    \langle a,s\rangle+\frac{\alpha}{2}\|a\|_1,
    \qquad
    \langle-a,y^-\rangle
    \ge
    \langle-a,s\rangle+\frac{\alpha}{2}\|a\|_1.
\]
Since \(\Cube\) is centrally symmetric, its support function satisfies
\[
    h_{\Cube}(a)=h_{\Cube}(-a)
    \ge
    \frac{\alpha}{2}\|a\|_1+|\langle a,s\rangle|
    \ge
    \frac{\alpha}{2}\|a\|_1.
\]
The rightmost expression is the support function of
\((\alpha/2)B_\infty^m\). Both sets are closed and convex, so support-function
duality gives
\((\alpha/2)B_\infty^m\subseteq \Cube\). This cube spans \(\mathbb R^m\), so the
evaluation map \(u\mapsto(\langle u,g_i\rangle)_{i=1}^m\) has rank \(m\),
which also proves that the \(g_i\) are linearly independent.
\end{proof}

\begin{proof}\linkofproof{lem:evaluation_cubes}
Given \(g_1,\dots,g_m\), define \(T:\mathbb R^m\to\mathbb R^d\) by
\(Ta=\sum_{i=1}^ma_ig_i\), and let \(\Cube\) be the evaluation image in the
statement. Its support function is
\[
    \sup_{y\in \Cube}\langle a,y\rangle
    =
    \sup_{w\in B_p^d}\langle w,Ta\rangle
    =
    \|Ta\|_{p^\ast}.
\]
Thus it suffices in every case to prove
\(\|Ta\|_{p^\ast}\gtrsim_{p,q}\rhopqd[m]\|a\|_1\).

\paragraph{The range \(p\le q\).}
If \(q\ge2\), take \(g_i=e_i\). Then
\[
    \|Ta\|_{p^\ast}=\|a\|_{p^\ast}
    \ge m^{-1/p}\|a\|_1
    =\rhopqd[m]\|a\|_1.
\]

If \(q<2\), choose an orthonormal basis
\(u_1,\dots,u_m\in\mathbb R^m\) with
\[
    \|u_i\|_\infty\lesssim m^{-1/2}
\]
and embed it in the first \(m\) coordinates of \(\mathbb R^d\). Set
\(
    g_i\defi c_qm^{1/q-1/2}u_i.
\)
Then \(g_i\in B_{q^\ast}^d\), and, since \(p^\ast\ge q^\ast>2\),
\[
    \|Ta\|_{p^\ast}
    \gtrsim_{p,q}
    m^{1/q-1/2}m^{1/p^\ast-1/2}\|a\|_2
    \ge
    m^{-(1/p+1/2-1/q)}\|a\|_1
    =
    \rhopqd[m]\|a\|_1.
\]

\paragraph{The range \(q<p\).}
If \(q>2\), partition a subset of \([d]\) into \(m\) disjoint blocks
\(B_1,\dots,B_m\) of common size \(b\asymp d/m\), and set
\(
    g_i\defi b^{-1/q^\ast}\mathbf 1_{B_i}.
\)
Then \(\|g_i\|_{q^\ast}=1\), and the disjoint supports give
\[
    \|Ta\|_{p^\ast}
    =b^{1/q-1/p}\|a\|_{p^\ast}
    \ge
    b^{1/q-1/p}m^{-1/p}\|a\|_1
    \asymp
    d^{1/q-1/p}m^{-1/q}\|a\|_1.
\]

Finally, suppose \(q\le2\). Kashin's random-sign construction
\citep{kashin1977diameters} gives, for \(m\le c_pd\), a matrix
\(G\in\{\pm1\}^{d\times m}\) such that
\[
    \|Ga\|_{p^\ast}\gtrsim_p d^{1/p^\ast}\|a\|_2,
    \qquad a\in\mathbb R^m.
\]
For \(p^\ast\le2\), this is the Kashin \(\ell_1\)-estimate followed by norm
comparison. For \(p^\ast\ge2\), it follows from the smallest singular-value
bound and the comparison of \(\ell_{p^\ast}\) with \(\ell_2\). If \(G_i\)
is column \(i\), define
\(
    g_i\defi d^{-1/q^\ast}G_i.
\)
Then \(\|g_i\|_{q^\ast}=1\), and
\[
    \|Ta\|_{p^\ast}
    =d^{-1/q^\ast}\|Ga\|_{p^\ast}
    \gtrsim_{p,q}
    d^{1/q-1/p}m^{-1/2}\|a\|_1.
\]
In each case the support-function criterion gives the asserted cube.
\end{proof}

\section{Proofs of minimax rate for some infinite-dimensional cases}

\begin{proof}\linkofproof{prop:sobolev-function-space-example}
We first prove the fat-shattering lower bound. 
By the Rellich-Kondrachov compact embedding theorem, the inclusion $J:H^s(\Omega)\hookrightarrow L_2(\Omega)$
is compact. Since both spaces are Hilbert and $J$ is injective, the singular-value decomposition for compact Hilbert-space operators gives an orthonormal basis $(e_j)_{j\ge1}$ of $H^s(\Omega)$, an orthonormal basis $(\phi_j)_{j\ge1}$ of $L_2(\Omega)$, and positive weights $(w_j)_{j\ge1}$ satisfying $Je_j=w_j^{-1}\phi_j$. 
Thus, if $u=\sum_{j\ge1}a_je_j\in H^s(\Omega)$, then, viewing $u$ as an element of $L_2(\Omega)$,  $ u=Ju=\sum_{j\ge1}a_jw_j^{-1}\phi_j$.  Writing $\theta_j=a_jw_j^{-1}$ gives us $\|u\|_{H^s(\Omega)}^2
    =
    \sum_{j\ge1}a_j^2
    =
    \sum_{j\ge1}w_j^2\theta_j^2$.  
Hence, we have the following representation 
\begin{equation*}
    \X
    =
    \bigg\{
        \sum_{j\ge1}\theta_j\phi_j:
        \sum_{j\ge1}w_j^2\theta_j^2\le1
    \bigg\}.
\end{equation*}
The classical spectral estimate for the Sobolev embedding gives $w_j\asymp_{s,m}j^{s/m}$,
see \citet[Section~3.3.4]{edmunds1996function}.\footnote{In the notation of \citet{edmunds1996function}, take $A=F$, $p_1=p_2=q_1=q_2=2$, $s_1=s$, $s_2=0$, and dimension $m$. Their approximation-number estimate then gives $a_j\asymp_{s,m}j^{-s/m}$. For compact operators between Hilbert spaces, approximation numbers coincide with singular values, while in our decomposition the corresponding singular value is $w_j^{-1}$.} Let $n$ satisfy $\frac{\alpha^2}{4}\sum_{j=1}^n w_j^2\le1$, and take $g_j=\phi_j \in \H$, $j=1,\ldots,n$, with zero thresholds. For every
$\epsilon\in\{\pm1\}^n$, define $u_\epsilon
    \defi
    \frac{\alpha}{2}
    \sum_{j=1}^n\epsilon_j\phi_j$.
Then $\sum_{j\ge1}w_j^2
    \langle u_\epsilon,\phi_j\rangle_{L_2}^2
    =
    \frac{\alpha^2}{4}\sum_{j=1}^n w_j^2
    \le1$, 
so $u_\epsilon\in\X$, while for every $j\le n$, $\epsilon_j\langle g_j,u_\epsilon\rangle
    =
    \frac{\alpha}{2}$.
Thus $g_1,\ldots,g_n$ are $\alpha$-fat-shattered. Since $w_j\asymp_{s,m}j^{s/m}$, the standard integral comparison for sums of powers gives
\begin{align*}
    \sum_{j=1}^n w_j^2 \asymp_{s,m}
    \sum_{j=1}^n j^{2s/m} \asymp_{s,m}
    \int_0^n x^{2s/m}\,dx=
    \frac{1}{1+2s/m}\,
    n^{1+2s/m} \asymp_{s,m}
    n^{1+2s/m}.
\end{align*}
Therefore the condition on $n$, and the fact that we can have a set of size $n$ that is fat-shattered, implies $\fat[\alpha](\cL[\X,\H])
    \gtrsim_{s,m}
    \alpha^{-\frac{2m}{m+2s}}$.
\medskip

For the sequential upper bound, it is sufficient to construct a uniform cover of the linear class, since every uniform cover induces a sequential cover on any input tree. For $f_u(g)=\langle g,u\rangle$ and $f_v(g)=\langle g,v\rangle$, since $\H=B_{L_2(\Omega)}$, we have $\|f_u-f_v\|_\infty=\sup_{g\in\H}|\langle g,u-v\rangle|=\|u-v\|_{L_2}$. Thus a uniform cover of $\cL[\X,\H]$ is equivalent to an $L_2$-cover of $\X$.
Let $N_2(\X,R)$ denote the covering number of $\X$ by radius-$R$ balls in $L_2(\Omega)$. The Sobolev entropy estimate of \citet[Sections~3.3.2 and~3.5]{edmunds1996function} gives
\[
    \log N_2(\X,R)\lesssim_{s,m}R^{-m/s},\qquad 0<R<1.
\]
Cover $\X$ by these balls.
Note that the squared $L^2$ distance to the center of each ball is $(1/4,2)$-uniformly convex %
with range at most $R^2$ (see \cref{def:uniform-convexity-and-range} for the notions of uniform convexity and range) \citep{clarkson1936uniformly}. Hence, for any $R \in (0,1)$, \cref{lem:uniformly-convex-localization,lem:seq_union_bound} gives
\begin{equation*}
    \sfat[\alpha](\cL[\X,\H])
    \lesssim_{s,m} \inf_{R \in (0,1)}
    \left\{\frac{R^2}{\alpha^2}
    +
    \log N_2(\X,R)\right\} \lesssim_{s,m} \inf_{R \in (0,1)} \left\{\frac{R^2}{\alpha^2}+R^{-m/s}\right\} \lesssim_{s,m}
    \alpha^{-\frac{2m}{m+2s}},
\end{equation*}
where the last inequality follows by taking $R=\alpha^{2s/(m+2s)}$ for $0<\alpha<1$, so both terms equal $\alpha^{-2m/(m+2s)}$.
\end{proof}

\begin{proof} \linkofproof{prop:lipschitz_bound}
For the ordinary fat-shattering lower bound, take a grid
$x_1,\ldots,x_n\in[1/2,1]^m$ (this is basically the elements of $\H$) with spacing $\alpha$ in each coordinate. Then
$n\asymp_m\alpha^{-m}$ and, for every $i\neq j$,
$\|x_i-x_j\|_\infty\ge\alpha$. For every
$\epsilon\in\{\pm1\}^n$, let $u_\epsilon(0)=0$ and
$u_\epsilon(x_i)=\epsilon_i\alpha/2$. These values are $1$-Lipschitz on
$\{0,x_1,\ldots,x_n\}$, since for $i\neq j$, $|u_\epsilon(x_i)-u_\epsilon(x_j)|
    \le
    \alpha
    \le
    \|x_i-x_j\|_\infty$,
while
$|u_\epsilon(x_i)-u_\epsilon(0)|=\alpha/2\le\|x_i\|_\infty$.
By the McShane extension theorem, $u_\epsilon$ extends to a
$1$-Lipschitz function on $\Omega$. Since $g_i=\delta_{x_i}$, $\epsilon_i\langle \delta_{x_i},u_\epsilon\rangle
    =
    \epsilon_i u_\epsilon(x_i)
    =
    \frac{\alpha}{2}$.
Hence $x_1,\ldots,x_n$ are
$\alpha$-fat-shattered with zero thresholds, and therefore $\fat[\alpha](\cL[\X,\H])
    \gtrsim_m
    \alpha^{-m}$. \medskip

For the sequential upper bound, the entropy estimate for Lipschitz functions gives $\log \Ninf(\X,\alpha)
    \lesssim_m
    \alpha^{-m}$, 
see \citet[Theorem 2.7.1.]{vanderVaart1996}. Since $\H=\{0\}\cup\{\pm\delta_x:x\in\Omega\}$, the uniform metric on $\cL[\X,\H]$ is exactly the uniform metric on $\X$, as $\sup_{g\in\H}
    |\langle g,u-v\rangle|
    =
    \sup_{x\in\Omega}
    |u(x)-v(x)|
    =
    \|u-v\|_\infty$.

Now suppose a tree of depth $n$ is $\alpha$-shattered, and let
$u_\epsilon\in\X$ be a witness for each path
$\epsilon\in\{\pm1\}^n$. If two distinct paths $\epsilon,\epsilon'$ first
differ at level $t$, they encounter the same point $x_t$ and threshold
$r_t$. Assuming without loss of generality that $\epsilon_t=1$ and
$\epsilon_t'=-1$, the shattering inequalities give $u_\epsilon(x_t)\ge r_t+\frac{\alpha}{2}$, $u_{\epsilon'}(x_t)\le r_t-\frac{\alpha}{2}$, 
and hence
$\|u_\epsilon-u_{\epsilon'}\|_\infty\ge\alpha$.
Thus the $2^n$ witnesses are pairwise $\alpha$-separated, so by triangle inequality any uniform
$\alpha/4$-cover of $\X$ contains at least $2^n$ elements. Therefore
\begin{equation*}
    n\log2
    \le
    \log \Ninf(\X,\alpha/4)
    \lesssim_m
    \alpha^{-m}.
\end{equation*}
Hence
$\sfat[\alpha](\cL[\X,\H])\lesssim_m\alpha^{-m}$. 
\end{proof}

\begin{proof}\linkofproof{prop:basis_bound}
For the ordinary lower bound, let $n\le(2/\alpha)^p$ and consider the fixed functionals $g_1,\ldots,g_n$ with zero thresholds.
For every $(\epsilon_1,\ldots,\epsilon_n)\in\{\pm1\}^n$, let $ u
    =
    \frac{\alpha}{2}
    \sum_{j=1}^n
    \epsilon_j\phi_j$.
Since $n\le(2/\alpha)^p$, 
$\|u\|^p
    =
    n\left(\frac{\alpha}{2}\right)^p
    \le1$,  
so $u\in\X$. Moreover, for each $j\le n$, $\epsilon_j\langle g_j,u\rangle
    =
    \frac{\alpha}{2}$.
Thus $g_1,\ldots,g_n$ are $\alpha$-fat-shattered with zero thresholds, and $\fat[\alpha](\cL[\X,\H])
    \gtrsim_p
    \alpha^{-p}$. \medskip

For the sequential upper bound, consider an $\alpha$-shattered tree of depth $n$. We construct one root-to-leaf path $\varepsilon=(\varepsilon_1,\dots,\varepsilon_n)$ greedily.
Fix a coordinate $g_j$. We prove by induction that, after its $k$-th occurrence along the constructed path, we have $|\langle g_j,u\rangle|
    \ge
    \left(k-\frac12\right)\alpha$ 
for every $u\in\X$ realizing the current partial path, i.e., satisfying all the shattering inequalities corresponding to the branches chosen so far (there is at least one because the tree is $\alpha$-shattered). For $k=1$, choose the branch whose constraint lies away from zero, which gives $|\langle g_j,u\rangle|\ge\alpha/2$.
For the induction step, suppose for instance that after the $k$th occurrence we have $\langle g_j,u\rangle\ge(k-\frac12)\alpha$. At the next occurrence, let $r$ be the threshold. Since the tree is shattered, the negative child is realizable, so there exists some $u^-\in\X$ satisfying the previous constraints together with
$\langle g_j,u^-\rangle\le r-\alpha/2$. By the induction hypothesis, the same $u^-$ satisfies
$\langle g_j,u^-\rangle\ge(k-\frac12)\alpha$. Hence
\[
    r-\frac{\alpha}{2}
    \ge
    \langle g_j,u^-\rangle
    \ge
    \left(k-\frac12\right)\alpha,
\]
so $r\ge k\alpha$. Choosing the positive child then gives
$\langle g_j,u\rangle\ge r+\alpha/2\ge(k+\frac12)\alpha$. The negative case is symmetric.

Thus, if $k_j$ denotes the number of occurrences of $g_j$ along the resulting path, any witness $u\in\X$ for that path satisfies
$|\langle g_j,u\rangle|\ge(k_j-\frac12)\alpha$ whenever $k_j>0$. Since $\sum_j k_j=n$,
\begin{align*}
    1
    \ge \|u\|^p=
    \sum_j |\langle g_j,u\rangle|^p\ge
    \alpha^p\sum_{j:k_j>0}\left(k_j-\frac12\right)^p\ge
    \left(\frac{\alpha}{2}\right)^p\sum_j k_j
    =
    \left(\frac{\alpha}{2}\right)^p n.
\end{align*}
Hence $n\le(2/\alpha)^p$, so
$\sfat[\alpha](\cL[\X,\H])\lesssim_p\alpha^{-p}$.
For the optimization consequence, replace $\H$ by $\{0\}\cup\{\pm g_j:j\ge1\}$. This does not change either shattering dimension.
\end{proof}

\end{document}